\crefname{hypothesis}{Hypothesis}{Hypotheses}
\newcommand\dx{\mathrm{d}x}
\newcommand\ds{\mathrm{d}s}
\newcommand\T{\mathcal{T}}
\newcommand\F{\mathcal{F}}
\newcommand\N{\mathcal{N}}
\newcommand{\pd}[3]{\frac{\p^{#3} #1}{\p #2^{#3}}}
\newcommand\dphi{\mathrm{d}\varphi}
\newcommand\tr{\operatorname{tr}} 
\newcommand\bo{\boldsymbol}
\newcommand\ubar{\underline}
\newcommand\op{\operatorname}
\newcommand\p{\partial}
\newcommand{\bh}[1]{\bo{\hat{#1}}}
\newcommand\mP{\mathcal{P}}
\title{A $C^{0}$ finite element approximation of planar oblique derivative problems in non-divergence form
 \thanks{%Submitted to the editors DATE.  
 The work of Shuonan Wu is supported in part by the National Natural
 Science Foundation of China grant No.  11901016. }
 }
\author{
%  TBA
Guangwei Gao \quad \quad 
Shuonan Wu\thanks{Corresponding author. School of Mathematical Sciences,
 Peking University, Beijing 100871, China 
.}
}
\begin{document}

 \maketitle
% % % REQUIRED
\begin{abstract}
  This paper proposes a $C^{0}$ (non-Lagrange) primal finite element approximation of the linear elliptic equations in non-divergence form with oblique boundary conditions in planar, curved domains. As an extension of [Calcolo, 58 (2022), No. 9], the Miranda-Talenti estimate for oblique boundary conditions at a discrete level is established by enhancing the regularity on the vertices. Consequently, the coercivity constant for the proposed scheme is exactly the same as that from PDE theory. The quasi-optimal order error estimates are established by carefully studying the approximation property of the finite element spaces. Numerical experiments are provided to verify the convergence theory and to demonstrate the accuracy and efficiency of the proposed methods.
\end{abstract}

% % % REQUIRED
\begin{keywords}
  Elliptic PDEs in non-divergence form, oblique derivative problems, Cordes condition, $C^0$ (non-Lagrange) finite element methods. 
\end{keywords}

% % REQUIRED  
 \begin{AMS}
 65N12, 65N15, 65N30, 35J15, 35D35
 \end{AMS}

\section{Introduction}
We consider the elliptic equations in non-divergence form in a planar domain subject to the oblique boundary conditions. 
The model problem is to find $u: \Omega \rightarrow \mathbb{R}$ with $\int_{\Omega} u \dx = 0$ such that  
\begin{equation}\label{eq:obl-der-problem} 
\begin{aligned}
A:D^{2}u &=  f  &\text{a.e. in }\Omega, \\ 
 \ell \cdot \nabla u &= c &\text{on }\partial \Omega.
\end{aligned}
\end{equation} 
The domain $\Omega \subset \mathbb{R}^{2}$ is assumed to have a $C^{2}$ boundary. 
Here, the coefficient matrix $A \in L^{\infty}(\Omega;\mathbb{R}^{2\times 2})$ satisfies the uniform ellipticity condition \eqref{eq:uniform-elliptic}, $\ell$ is a given unit vector field defined on $\p\Omega$ (hereby called the ``oblique vector field"), $A:B := \sum_{i,j = 1}^{2} A_{i,j}B_{i,j}$ denotes the Frobenius inner product of two matrices. 
The constant $c$ appearing in the boundary conditions is intended to absorb the compatibility condition, i.e., $c$ is a priori unknown.  
The problem \eqref{eq:obl-der-problem} considered in this paper corresponds to the homogeneous oblique boundary conditions.

%%%%%% Part 1: Background, Application, Nonlinear %%%%%%
%optimal-transport, Monge-Ampere, nonlinear bc, other applications. 
The oblique derivative problems arise from the linearization of transport boundary conditions for the Monge-Amp\'{e}re equation \cite{urbas1997second}.
Further applications of oblique derivative problems include the problem of determining the gravitational fields of celestial bodies \cite{palagachev2008poincare}, and the study of systems of certain conservation laws \cite{keyfitz2000proof, zheng2003global}.

%%%%%% Part 2: PDE theory - H2 strong solution %%%%%%
% non-divergence structure: classical solutions, viscosity solutions, strong solutions.
Due to the non-divergence structure of \eqref{eq:obl-der-problem}, the concept of weak solutions based on the integration by part is no longer applicable. Instead, solution concepts such as classical solutions, viscosity solutions, and strong solutions are applicable.
For these different solution concepts, numerical methods for the linear elliptic equations in non-divergence form have recently experienced rapid developments; See \cite{smears2013discontinuous, smears2014discontinuous, smears2014discontinuous, kawecki2019dgfem, brenner2020adaptive, kawecki2020convergence, kawecki2020unified, gallistl2017variational, gallistl2019mixed, qiu2020adaptive, wu2021c0} for approximating the $H^2$ strong solutions, and \cite{feng2017finite,feng2018interior,nochetto2018discrete,lakkis2011finite,lakkis2013finite,wang2018primal,gallistl2019mixed} for others.

% $H^{2}$ strong solutions of non-divergence form with oblique bc:
This article considers the $H^{2}$ strong solutions of non-divergence form with oblique boundary conditions \eqref{eq:obl-der-problem}. 
Here, the oblique vector field $\ell$ satisfies some technical assumptions (c.f. \cite{maugeri2000elliptic}), which are specified in Assumption \ref{ass:problem}. 
The coefficient matrix $A$ is allowable to be discontinuous. 
As compensation, it is required to satisfy the Cordes condition \eqref{eq:cordes}, which is equivalence to the uniform ellipticity \eqref{eq:uniform-elliptic} in the planar case. In addition, the well-posedness of \eqref{eq:obl-der-problem} hinges on a variant of the Miranda-Talenti estimate.
We refer the reader to \cite{maugeri2000elliptic} for the analysis of PDEs with discontinuous coefficients under the Cordes condition.  

%In addition, the well-posedness of \eqref{eq:obl-der-problem} hinges on a variant of the Miranda-Talenti estimate.
%A careful analysis of the Miranda-Talenti estimate at the discrete level is one of the keys to designing a stabilized finite element scheme. 

%%%%%% Part 3: Existing method %%%%%%
% other method  
% H2 solu of nondiv oblique bc: 1. mixed(P1 affine) 2. DG(penalization).
For the numerical approximations of \eqref{eq:obl-der-problem}, a discontinuous Galerkin (DG) method was proposed in \cite{kawecki2019discontinuous}, which is applicable when choosing suitably large penalization parameters. 
A mixed method was proposed in  \cite{gallistl2019numerical}, where the function approximating $\nabla u$ is confined to the piecewise linear space.  
More examples on the numerical apprixmations of oblique derivative problems are discussed in \cite{barrett1985fixed, wen1994finite,favskova2010finite,medl2018numerical}, where \cite{barrett1985fixed, favskova2010finite} apply to a particular geodetic and free boundary problem.

%%%%%% Part 4: Novelty of the current work %%%%%%
% 1. without penalization.   \\
This paper proposed a $C^{0}$ primal finite element approximation of \eqref{eq:obl-der-problem} without introducing any penalization term. 
Following the principle in \cite{wu2021c0}, we established a discrete version of the Miranda-Talenti estimate for the oblique boundary condition by adopting the $C^{0}$ finite element with the enhanced regularity on the vertices. 
A typical family of finite elements that meets this requirement is the
family of $\mathcal{P}_{k}$-Hermite finite elements ($k \geq 3$). 
Since the problem \eqref{eq:obl-der-problem} is solved in a planar $C^{2}$ domain, 
we apply the techniques in \cite{zlamal1973curved,bernardi1989optimal} and 
use the curved Hermite element; See Section \ref{subsec:fem_space} for details.
The jump and boundary terms in the discrete Miranda-Talenti-type identity naturally induce the stabilization term in the proposed numerical scheme. 

% 5. analysis: a) wellposedness mimics H2-theory \\ 
%         b) coercive constant\\ 
%         c) compatibility constant\\ 
%         d) error estimate: quasi-interpolation operator, optimal order, preserve oblique bc
Thanks to the Miranda-Talenti estimate at a discrete level, we show the well-posedness of the proposed numerical scheme, which mimics the analysis of $H^{2}$ solutions to a great extent.
A striking feature of the proposed scheme is that the coercivity constant at the discrete level is exactly the same as that from PDE theory. 
Another interesting feature is that the proposed method also gives an approximation of the unknown constant that arises in the compatibility condition.  
The proposed scheme is proved to be consistent, coercive, and bounded. Moreover, we constructed a quasi-interpolation operator that preserves the oblique boundary conditions at a discrete level with optimal order in the energy norm, which naturally leads to the energy norm error estimates.

% %% Organization 
The remaining parts of this paper are organized as follows. 
Section \ref{sec:pre} reviews the $H^{2}$ strong solution theory of \eqref{eq:obl-der-problem}.
In Section \ref{sec:fem}, we state the finite element spaces and present the proof of the discrete Miranda-Talenti-type identity. 
The numerical scheme for approximating \eqref{eq:obl-der-problem} is proposed and analyzed in Section \ref{sec:analysis}. 
Numerical experiments are presented in Section \ref{sec:numerical}.
Some technical proofs can be found in Appendixes \ref{app:approximation} and \ref{app:poincare}.

We use $D$ to denote a generic subdomain of $\Omega$, and $\p D$ denotes its boundary. 
$W^{s}_{p}(D)$ denotes the standard Sobolev space for $s \geq 0$ and $1 \leq p \leq \infty$, 
$W^{0}_{p}(D) = L^{p}(D)$ and $W^{s}_{2}(D) = H^{s}(D)$. 
$(\cdot,\cdot)_{D}$ denotes the standard inner product on $L^{2}(D)$.  
For convenience, we use $C$ to denote a generic positive constant independent of mesh size $h$. The notation $ X \lesssim Y $ means $ X \leq CY $. $ X \eqsim Y $ means
$ X\lesssim Y $ and $ Y \lesssim X $. We also denote $|\cdot|$ as the Euclidean norm for vectors and the Frobenius norm for matrices.

\section{Review of the \texorpdfstring{$H^2$}{H2} strong solutions} \label{sec:pre}
This section reviews the $H^{2}$ strong solutions to the linear elliptic equations in non-divergence form with the oblique boundary conditions. 

%%%%%%%%%%%%%%%%%%%%%%%%%%%%%%%%%%%%%%%
%coefficient matrix A: Cordes
%%%%%%%%%%%%%%%%%%%%%%%%%%%%%%%%%%%%%%%
The coefficient $A \in L^{\infty}(\Omega; \mathbb{R}^{2 \times 2})$ is assumed to satisfy the uniform ellipticity, i.e., there exist $\ubar{\nu}, \bar{\nu} > 0$ such that 
\begin{equation} \label{eq:uniform-elliptic}
    \underline{\nu}|\bo{\xi}|^2 \leq \bo{\xi}^tA(x)\bo{\xi} \leq \bar{\nu}
    |\bo{\xi}|^2 \qquad \forall \bo{\xi} \in \mathbb{R}^2, \text{ a.e. in
    }\Omega.
\end{equation} 
It is well known that in two dimensions, the uniform ellipticity \eqref{eq:uniform-elliptic} implies the following Cordes condition \eqref{eq:cordes} with $\varepsilon = 2 \ubar{\nu}\bar{\nu} / (\ubar{\nu}^{2} + \bar{\nu}^{2}) $, see \cite{smears2014discontinuous}.
\begin{definition}[Cordes condition]
  \label{def:cordes}The coefficient satisfies that there is an $\varepsilon \in (0, 1]$ such that 
\begin{equation} \label{eq:cordes}
\frac{|A|^2}{(\tr A)^2} \leq \frac{1}{1 + \varepsilon} 
\qquad \text{a.e. in }\Omega.
\end{equation}    
\end{definition}

Define the strictly positive function $\gamma \in L^{\infty}(\Omega)$ by
$ \gamma := \frac{\tr A}{|A|^{2}}. $
The $H^{2}$ well-posedness of the problem \eqref{eq:obl-der-problem} hinges on the following lemma; see \cite[Lemma 1]{smears2013discontinuous} for the proof. 
\begin{lemma}[property of Cordes condition]
  \label{lm:cordes}Under the Cordes condition \eqref{eq:cordes}, for any $v\in H^2(\Omega)$ and open set $U \subset \Omega$, the following inequality holds a.e. in $U$
\begin{equation} \label{eq:nondiv-Cordes-prop}
|\gamma A:D^{2} v - \Delta v| \leq 
\sqrt{1-\varepsilon} |D^2v|.
\end{equation}
\end{lemma}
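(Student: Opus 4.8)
The plan is to reduce the claimed pointwise inequality to a Cauchy--Schwarz-type estimate in the variable that measures how far $\gamma A$ deviates from the identity matrix. Work a.e.\ at a fixed point $x\in U$, where $A=A(x)$ is a fixed $2\times 2$ matrix and $\gamma=\tr A/|A|^2$. The key observation is that
\[
\gamma A:D^2v-\Delta v=\gamma A:D^2v-I:D^2v=(\gamma A-I):D^2v,
\]
since $I:D^2v=\Delta v$ in two dimensions. Hence, by Cauchy--Schwarz for the Frobenius inner product,
\[
|\gamma A:D^2v-\Delta v|\le |\gamma A-I|\,|D^2v|,
\]
and the whole problem collapses to showing the purely algebraic fact that $|\gamma A-I|^2\le 1-\varepsilon$ under the Cordes condition \eqref{eq:cordes}.

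The second step is that algebraic computation. Expanding the Frobenius norm,
\[
|\gamma A-I|^2=\gamma^2|A|^2-2\gamma(A:I)+|I|^2=\gamma^2|A|^2-2\gamma\,\tr A+2.
\]
Substituting $\gamma=\tr A/|A|^2$ gives $\gamma^2|A|^2=(\tr A)^2/|A|^2$ and $2\gamma\,\tr A=2(\tr A)^2/|A|^2$, so
\[
|\gamma A-I|^2=2-\frac{(\tr A)^2}{|A|^2}.
\]
Now the Cordes condition \eqref{eq:cordes} says $|A|^2/(\tr A)^2\le 1/(1+\varepsilon)$, equivalently $(\tr A)^2/|A|^2\ge 1+\varepsilon$, whence $|\gamma A-I|^2\le 2-(1+\varepsilon)=1-\varepsilon$. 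Combining this with the Cauchy--Schwarz bound from the first step yields \eqref{eq:nondiv-Cordes-prop} a.e.\ in $U$.

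There is essentially no hard step here: the only points requiring a word of care are that the identity $\Delta v=I:D^2v$ is special to the fact that we are comparing against $I$ (with $|I|^2=2$ in dimension two, which is exactly what makes the constant come out as $1-\varepsilon$ rather than something dimension-dependent), and that $\tr A>0$ a.e.\ — forced by uniform ellipticity \eqref{eq:uniform-elliptic} — so that $\gamma$ is well defined and strictly positive and the manipulation $(\tr A)^2/|A|^2\ge 1+\varepsilon$ is legitimate. Since the estimate is pointwise a.e.\ in $x$, no integration or density argument is needed, and the $H^2(\Omega)$ regularity of $v$ enters only to guarantee that $D^2v$ is a well-defined $L^2$ (hence a.e.\ finite) matrix field. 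The main ``obstacle,'' such as it is, is simply recognizing the reduction to the matrix identity $|\gamma A-I|^2=2-(\tr A)^2/|A|^2$; everything else is bookkeeping.
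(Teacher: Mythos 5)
Your proof is correct and is precisely the standard argument for this lemma: rewrite $\gamma A:D^2v-\Delta v=(\gamma A-I):D^2v$, apply Cauchy--Schwarz for the Frobenius inner product, and verify the algebraic identity $|\gamma A-I|^2=2-(\tr A)^2/|A|^2\le 1-\varepsilon$ under the Cordes condition. The paper does not reproduce a proof but cites Lemma~1 of Smears--S\"uli (2013), whose argument is the one you give; so your approach coincides with the paper's intended one.
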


%%%%%%%%%%%%%%%%%%%%%%%%%%%%%%%%%%%%%%%%%%%
%The domain Omega 
%%%%%%%%%%%%%%%%%%%%%%%%%%%%%%%%%%%%%%%%%%%
The bounded domain $\Omega \subset \mathbb{R}^{2}$ is assumed to have a $C^{2}$ boundary. Moreover, $\partial \Omega$ is parametrized by the arc length $\varphi$, i.e., $\bo{x}: [0,L] \rightarrow \mathbb{R}^{2}$
\begin{equation}\label{eq:omega-para} 
\bo{x}(\varphi) = \begin{pmatrix}
  x_{1}(\varphi) \\
  x_{2}(\varphi) \\
\end{pmatrix} 
\qquad \text{ with } \bo{x}(0) = \bo{x}(L). 
\end{equation}  
For a function $v: [0,L] \rightarrow \mathbb{R}$, let $\dot{v}$ be its derivative with respect to the arc length parameter $\varphi$. Similarly, $\ddot{v}$ denotes the second order derivative.
Note that $\bo{x} \in C^{2}([0,L]; \mathbb{R}^{2})$, 
we denote $\chi(\varphi) := \ddot{x}_{1}(\varphi)\dot{x}_{2}(\varphi) - \ddot{x}_{2}(\varphi)\dot{x}_{1}(\varphi)$ the curvature of $\partial \Omega$ at $\bo{x}(\varphi)$.
Let $n$ be the unit outward normal vector of $\partial \Omega$, and $t = \dot{\bo{x}}$ be the unit tangent vector. 

%%%%%%%%%%%%%%%%%%%%%%%%%%%%%%%%%%%%%%%%%%%
%oblique vector field
%%%%%%%%%%%%%%%%%%%%%%%%%%%%%%%%%%%%%%%%%%%
The oblique vector field $\ell: [0,L] \rightarrow \mathbb{R}^{2}$, which satisfies $\ell(0) = \ell(L)$, is assumed to have $C^{1}$ regularity. We denote $\theta$ the oriented angle (anticlockwise) from $n$ to $\ell$, then $\theta: [0,L] \rightarrow \mathbb{R}$ is of class $C^{1}$. 
We further assume an additional condition on the winding number of $\ell$, namely
\begin{equation}\label{eq:winding-num} 
\frac{\theta(L) - \theta(0)}{2\pi} = 0.
\end{equation} 
This means that $\ell$ does not make a full turn around the normal $n$. 
Note that $\ell$ is defined on the interval $[0,L]$, and it can be identified with a function on $\partial \Omega$. Notation like $\ell(\bo{x}(\varphi))$ instead of $\ell(\varphi)$ will sometimes be used for convenience. 

%%%%%%%%%%%%%%%%%%%%%%%%%%%%%%%%%%%%%%%%%%%
%solution space
%%%%%%%%%%%%%%%%%%%%%%%%%%%%%%%%%%%%%%%%%%%
We define the following subspace of $H^{2}(\Omega)$:
\begin{equation}\label{eq:obl-H2} 
 \begin{aligned}
 H^{2}_{\ell}(\Omega) :&= \{ v \in H^{2}(\Omega): \ell \cdot \nabla v \text{ is constant on }\partial \Omega \}, \\ 
 H^{2}_{\ell,0}(\Omega) &:= \{ v \in H^{2}_{\ell}(\Omega): \int_{\Omega} v \dx = 0\}.
 \end{aligned}
\end{equation} 
%%%%%%%%%%%%%%%%%%%%%%%%%%%%%%%%%%%%%%%%%%%
%M-T estimate
%%%%%%%%%%%%%%%%%%%%%%%%%%%%%%%%%%%%%%%%%%%
The analysis of the $H^{2}$ well-posedness of the problem \eqref{eq:obl-der-problem} hinges on several lemmas introduced below, which extend the important Miranda-Talenti estimate to the case of oblique boundary conditions. The proofs of these lemmas are given in \cite{maugeri2000elliptic}, and we sketch the proof here for completeness.  

\begin{lemma}\label{lm:mt-det} For any $v \in H^{2}_{\ell}(\Omega)$, it holds that
\begin{equation}\label{eq:mt-det} 
\int_{\Omega} (\Delta v)^{2} - |D^{2}v|^{2} \dx = 
 \int^{L}_{0} |\nabla v|^{2}\left(\dot{\theta} - \chi\right) \mathrm{d}\varphi. 
\end{equation} 
\end{lemma}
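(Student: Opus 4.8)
The identity~\eqref{eq:mt-det} is a curved-domain version of the classical Miranda–Talenti identity, so the plan is to start from the well-known pointwise/integration-by-parts identity for the Hessian and then carefully track the boundary contributions produced by the oblique condition. First I would recall the elementary identity $\int_\Omega (\Delta v)^2 - |D^2 v|^2 \,\dx = \int_{\partial\Omega}\big((\nabla v \cdot n)\,\Delta v - (D^2 v\, n)\cdot\nabla v\big)\,\ds$, valid for $v \in H^2(\Omega)$, obtained by integrating by parts twice; equivalently one writes $(\Delta v)^2 - |D^2 v|^2 = \operatorname{div}\big((\operatorname{div}\nabla v)\nabla v - (D^2 v)\nabla v\big)$ and applies the divergence theorem. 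This reduces the problem to evaluating the boundary integral $\int_{\partial\Omega}\big((\partial_n v)\,\Delta v - n^t D^2 v\, \nabla v\big)\,\ds$ and showing it equals $\int_0^L |\nabla v|^2(\dot\theta - \chi)\,\dphi$.

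The main work is this boundary computation, and it is where I expect the real obstacle to lie: one must decompose everything into the moving orthonormal frame $\{t, n\}$ along $\partial\Omega$ and use the structure of $H^2_\ell(\Omega)$. Writing $\nabla v = (\partial_t v)\,t + (\partial_n v)\,n$ on $\partial\Omega$, and using the Frenet relations $\dot t = \chi n$, $\dot n = -\chi t$, I would express the tangential derivative of $\nabla v$ along $\partial\Omega$ in terms of $D^2 v$ and the curvature. The term $n^t D^2 v\,\nabla v$ then splits into $(\partial_n v)\,(n^t D^2 v\, n) + (\partial_t v)\,(n^t D^2 v\, t)$, and $\Delta v = n^t D^2 v\, n + t^t D^2 v\, t$, so the integrand becomes $(\partial_n v)(t^t D^2 v\, t) - (\partial_t v)(n^t D^2 v\, t)$. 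The key step is to rewrite $t^t D^2 v\, t$ and $n^t D^2 v\, t$ using the second tangential derivative of $v$ along the curve: differentiating $\partial_t v = \dot{\bo{x}}^t\nabla v$ with respect to $\varphi$ gives $\frac{d}{d\varphi}(\partial_t v) = t^t D^2 v\, t + \ddot{\bo{x}}^t\nabla v = t^t D^2 v\, t + \chi\,\partial_n v$ (since $\ddot{\bo{x}} = \chi n$), and similarly differentiating $\partial_n v = n^t\nabla v$ yields an expression for $n^t D^2 v\, t$ involving $\frac{d}{d\varphi}(\partial_n v)$ and $\chi\,\partial_t v$.

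Substituting these, the boundary integrand collapses (after cancellation of the curvature cross-terms) to a total-derivative part plus the curvature term $-\chi|\nabla v|^2$, and the total-derivative part integrates to zero around the closed curve by periodicity of $\bo{x}$ and $v$. It remains to produce the $\dot\theta\,|\nabla v|^2$ term, and this is exactly where the oblique boundary condition enters: the constraint $\ell\cdot\nabla v \equiv$ const on $\partial\Omega$ means $\frac{d}{d\varphi}(\ell\cdot\nabla v) = 0$, i.e. $\dot\ell\cdot\nabla v + \ell^t D^2 v\, t = 0$; writing $\ell = \cos\theta\, n + \sin\theta\, t$ (oriented angle $\theta$ from $n$ to $\ell$) and using $\dot\ell = (\dot\theta - \chi \text{-contributions})$ in the frame, this identity converts the remaining $\partial_t v$–weighted Hessian term into $\dot\theta\,|\nabla v|^2$ modulo further total derivatives. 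I would organize the algebra so that all genuine total-derivative terms are collected and discarded via periodicity, leaving precisely $\int_0^L |\nabla v|^2(\dot\theta - \chi)\,\dphi$. A density argument ($C^\infty(\bar\Omega)\cap H^2_\ell$ dense in $H^2_\ell$, or simply $H^2$-continuity of both sides in $v$ together with trace estimates) handles the passage from smooth $v$ to general $v \in H^2_\ell(\Omega)$. The delicate point throughout is bookkeeping the frame derivatives and confirming that every curvature cross-term either cancels or is absorbed into the stated $\chi$ and $\dot\theta$ contributions; I would double-check this by testing the identity on the disk with $\ell = n$, where it must reduce to the standard $\int (\Delta v)^2 - |D^2 v|^2 = -\int \chi|\nabla v|^2$.
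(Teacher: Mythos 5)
Your plan is correct in outline but takes a genuinely different route from the paper's (which follows Maugeri--Palagachev--Softova). Both start from the same divergence identity: your $\operatorname{div}\big((\Delta v)\nabla v - (D^2 v)\nabla v\big)$ is componentwise identical to the paper's $\frac{\partial}{\partial x_1}\left(\partial_1 v\,\partial_{22}v - \partial_2 v\,\partial_{12}v\right) - \frac{\partial}{\partial x_2}\left(\partial_1 v\,\partial_{12}v - \partial_2 v\,\partial_{11}v\right)$. The two arguments part ways at the boundary computation. The paper immediately rewrites everything in the $\{\ell,\ell^{\perp}\}$ frame, where $\partial v/\partial\ell\equiv$ const kills a term outright and the factor $\dot\theta-\chi$ enters via the identity $\ell_1\dot\ell_2-\ell_2\dot\ell_1=\dot\theta-\chi$; you instead use the Frenet $\{t,n\}$ frame and bring in the oblique condition only afterward. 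Both work; yours is the more textbook differential-geometry route, but the paper's choice of frame is adapted to the constraint and shortens the algebra.

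One sentence in your sketch is a logical slip that, taken literally, gives the wrong result. After the Frenet substitutions, with $a=\partial_n v$ and $b=\partial_t v$, the boundary integrand becomes $a\dot b - b\dot a - \chi\left(a^2+b^2\right)$; the bilinear piece $a\dot b - b\dot a$ is \emph{not} a total $\varphi$-derivative and does \emph{not} integrate to zero by periodicity alone. If it did, you would be proving $\int_\Omega(\Delta v)^2-|D^2v|^2\,\dx = -\int_0^L\chi|\nabla v|^2\,\dphi$ for all $v\in H^2(\Omega)$, which is false without a boundary condition. The oblique constraint is needed precisely here: setting $p:=\partial v/\partial\ell\equiv c_0$ and $q:=\partial v/\partial\ell^{\perp}$, the rotation by $\theta$ relating $(a,b)$ to $(p,q)$ gives
\begin{equation*}
a\dot b - b\dot a = \left(p\dot q - q\dot p\right) + \dot\theta\left(a^2+b^2\right) = c_0\,\dot q + \dot\theta\,|\nabla v|^2,
\end{equation*}
and it is only $c_0\dot q = \frac{\mathrm{d}}{\dphi}(c_0 q)$ that is a genuine total derivative integrating to zero by periodicity. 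With this repair your argument closes and recovers \eqref{eq:mt-det}; the sanity check with $\ell=n$ on the disk that you propose is exactly the kind of test that would catch the slip.
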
 
\begin{proof}
  First, a direct calculation gives
  \begin{equation*}
    \begin{aligned}
      (\Delta v)^{2} - |D^{2}v|^{2} 
      %& = 2\partial_{11}v\partial_{22}v - 2(\partial_{12}v)^{2} \\ 
      & = \frac{\partial}{\partial x_{1}}
     \left(\partial_{1}v\partial_{22}v - \partial_{2}v\partial_{12}v\right) - 
     \frac{\partial}{\partial x_{2}}
     \left(\partial_{1}v\partial_{12}v - \partial_{2}v\partial_{11}v\right),
    \end{aligned}
  \end{equation*}
   for sufficiently smooth function $v$. By the divergence Theorem, we obtain 
  $$\begin{aligned}
\int_{\Omega}(\Delta v)^{2} - |D^{2}v|^{2} \dx =
\int_{0}^{L} 
 &\left(\partial_{1}v\partial_{22}v - \partial_{2}v\partial_{12}v\right) n_{1}
 - \left(\partial_{1}v\partial_{12}v - \partial_{2}v\partial_{11}v\right) n_{2}
 \mathrm{d}\varphi.
\end{aligned} $$
Next, we express the first and second order derivative of $v$ as the directional derivative along $\ell$ and its perpendicular direction $\ell^{\perp} := (-\ell_{2}, \ell_{1})$. Using the condition that $\ell \cdot \nabla u$ is constant on the boundary, we can obtain \eqref{eq:mt-det}. We refer to \cite[Page. 51]{maugeri2000elliptic} for more details of the proof. 
\end{proof}

We are now ready to give the Miranda-Talenti estimate in the case of oblique boundary conditions.
\begin{lemma}[Miranda-Talenti estimate]
  \label{lm:mt-obl}Assume the domain $\Omega$ and the oblique vector field $\ell$ satisfy 
\begin{equation}\label{eq:obl-angle} 
 \chi_{0} := \min_{\p\Omega}(\dot{\theta} - \chi) \geq 0.
\end{equation} 
Then for any $v \in H^{2}_{\ell}(\Omega)$, it holds that 
\begin{equation}\label{eq:mt-obl} 
\int_{\Omega} |D^{2}v|^{2}\dx \leq \int_{\Omega} (\Delta v)^{2} \dx. 
\end{equation} 
\end{lemma}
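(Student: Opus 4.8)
The plan is to combine Lemma~\ref{lm:mt-det} with the geometric assumption~\eqref{eq:obl-angle}. Starting from the identity~\eqref{eq:mt-det}, the right-hand side is $\int_0^L |\nabla v|^2 (\dot\theta - \chi)\,\dphi$. Since $|\nabla v|^2 \geq 0$ pointwise on $\p\Omega$ and, by~\eqref{eq:obl-angle}, $\dot\theta - \chi \geq \chi_0 \geq 0$ everywhere on $\p\Omega$, the integrand is nonnegative, so the whole boundary integral is $\geq 0$. Therefore $\int_\Omega (\Delta v)^2 - |D^2 v|^2 \,\dx \geq 0$, which rearranges to~\eqref{eq:mt-obl}.

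One technical point that should be addressed for completeness: Lemma~\ref{lm:mt-det} and the integration-by-parts argument behind it are carried out for sufficiently smooth $v$, while the statement is for $v \in H^2_\ell(\Omega)$. So the second step is a density argument: the identity~\eqref{eq:mt-det} extends from smooth functions to all of $H^2_\ell(\Omega)$ because both sides are continuous in the $H^2(\Omega)$ norm — the left side obviously so, and the right side because the trace of $\nabla v$ on $\p\Omega$ depends continuously on $v$ in $H^2(\Omega)$ (trace theorem), while $\dot\theta - \chi \in C(\p\Omega) \subset L^\infty(\p\Omega)$ is a fixed bounded weight. One must also check that smooth functions satisfying the boundary constraint $\ell\cdot\nabla v = \text{const}$ are dense in $H^2_\ell(\Omega)$; this is where the $C^1$ regularity of $\ell$ and the $C^2$ regularity of $\p\Omega$ enter, and it is essentially already used in the proof of Lemma~\ref{lm:mt-det}, so I would simply invoke it.

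The argument is short and the only genuine obstacle is the one just mentioned: making sure the chain of reasoning is valid at the $H^2$ level rather than only for classical functions. Since the paper has already stated Lemma~\ref{lm:mt-det} for $v \in H^2_\ell(\Omega)$, one is entitled to take that as given, in which case the proof of Lemma~\ref{lm:mt-obl} reduces to the single observation about the sign of the boundary integral and costs two lines. I would write it that way, citing Lemma~\ref{lm:mt-det} and~\eqref{eq:obl-angle}.

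\begin{proof}
By Lemma~\ref{lm:mt-det}, for any $v \in H^{2}_{\ell}(\Omega)$ we have
\begin{equation*}
\int_{\Omega} (\Delta v)^{2} - |D^{2}v|^{2} \dx = \int_{0}^{L} |\nabla v|^{2}\left(\dot{\theta} - \chi\right) \dphi.
\end{equation*}
By assumption~\eqref{eq:obl-angle}, $\dot{\theta} - \chi \geq \chi_{0} \geq 0$ on $\p\Omega$, and $|\nabla v|^{2} \geq 0$, so the right-hand side is nonnegative. Hence
\begin{equation*}
\int_{\Omega} |D^{2}v|^{2} \dx \leq \int_{\Omega} (\Delta v)^{2} \dx,
\end{equation*}
which is~\eqref{eq:mt-obl}.
\end{proof}
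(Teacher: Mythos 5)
Your proof is correct and matches the intended argument: the paper states Lemma~\ref{lm:mt-det} for all $v\in H^2_\ell(\Omega)$, so the sign of the boundary integral under hypothesis~\eqref{eq:obl-angle} immediately gives~\eqref{eq:mt-obl}. The density remark you add is reasonable for completeness but is already subsumed in the statement of Lemma~\ref{lm:mt-det}, so the two-line version you give at the end is exactly the right level of detail.
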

Under a stronger assumption on $\Omega$ and $\ell$, we obtain the following gradient estimate (see \cite[Page. 53]{maugeri2000elliptic}).
\begin{lemma}[gradient estimate]
  \label{lm:grad-esti}Assume the domain $\Omega$ and the oblique vector field $\ell$ satisfy  
  $\chi_{0} = \displaystyle\min_{\p\Omega}(\dot{\theta} - \chi) > 0$.
Then for any $v \in H^{2}_{\ell}(\Omega)$, it holds that
\begin{equation}\label{eq:grad-esti} 
  \int_{\Omega}|\nabla v|^{2}\dx \leq C \int_{\Omega} (\Delta v)^{2} \dx,  
\end{equation} 
where the constant $C$ only depends on $\Omega$ and $\chi_{0}$.
\end{lemma}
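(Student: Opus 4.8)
The plan is to derive the gradient estimate \eqref{eq:grad-esti} from the Miranda-Talenti identity \eqref{eq:mt-det} by combining it with a Poincar\'e-type argument. The key observation is that under the strict inequality $\chi_0 > 0$, the boundary term in \eqref{eq:mt-det} controls $\int_0^L |\nabla v|^2 \, \dphi$ from above:
\begin{equation*}
  \chi_0 \int_0^L |\nabla v|^2 \, \dphi \leq \int_0^L |\nabla v|^2 (\dot\theta - \chi) \, \dphi = \int_\Omega (\Delta v)^2 - |D^2 v|^2 \, \dx \leq \int_\Omega (\Delta v)^2 \, \dx.
\end{equation*}
So the trace of $|\nabla v|$ on $\partial\Omega$ is controlled by $\|\Delta v\|_{L^2(\Omega)}$. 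It remains to upgrade this boundary control of the gradient to an interior bound on $\|\nabla v\|_{L^2(\Omega)}$.

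For that second step I would apply each component $w = \partial_i v \in H^1(\Omega)$ and use a trace-Poincar\'e inequality of the form $\|w\|_{L^2(\Omega)}^2 \lesssim \|\nabla w\|_{L^2(\Omega)}^2 + \|w\|_{L^2(\partial\Omega)}^2$, valid on the bounded Lipschitz (indeed $C^2$) domain $\Omega$. Summing over $i = 1, 2$ gives
\begin{equation*}
  \int_\Omega |\nabla v|^2 \, \dx \lesssim \int_\Omega |D^2 v|^2 \, \dx + \int_{\partial\Omega} |\nabla v|^2 \, \ds \lesssim \int_\Omega |D^2 v|^2 \, \dx + \frac{1}{\chi_0} \int_\Omega (\Delta v)^2 \, \dx,
\end{equation*}
using the boundary bound from the first step. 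Finally I would invoke the Miranda-Talenti estimate \eqref{eq:mt-obl} (which holds since $\chi_0 > 0$ implies \eqref{eq:obl-angle}) to replace $\int_\Omega |D^2 v|^2 \, \dx$ by $\int_\Omega (\Delta v)^2 \, \dx$, yielding \eqref{eq:grad-esti} with a constant depending only on $\Omega$ (through the trace-Poincar\'e constant) and on $\chi_0$.

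The main obstacle, and the only point requiring genuine care, is justifying the trace-Poincar\'e inequality in the form used: a plain Poincar\'e inequality would need a zero-mean or zero-boundary-trace normalization, which $\partial_i v$ need not satisfy. The correct tool is the inequality $\|w\|_{L^2(\Omega)} \lesssim \|\nabla w\|_{L^2(\Omega)} + \|w\|_{L^1(\partial\Omega)}$ (or with $L^2(\partial\Omega)$), which is a standard consequence of compactness (Rellich) via a contradiction argument, or can be seen directly from the divergence theorem applied to $\operatorname{div}(x\, w^2)$. Since the excerpt mentions an Appendix on Poincar\'e-type estimates, I would cite that; otherwise the contradiction argument is short: if no such constant existed, take $w_k$ with $\|w_k\|_{L^2(\Omega)} = 1$, $\|\nabla w_k\|_{L^2(\Omega)} + \|w_k\|_{L^2(\partial\Omega)} \to 0$; extract an $H^1$-weakly / $L^2$-strongly convergent subsequence with limit $w$ satisfying $\nabla w = 0$, hence $w$ constant, and $w = 0$ on $\partial\Omega$ by trace continuity, forcing $w \equiv 0$, contradicting $\|w\|_{L^2(\Omega)} = 1$. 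One should also note at the outset the usual density argument: \eqref{eq:mt-det} and the component-wise manipulations are first done for smooth $v$ and extended to $H^2_\ell(\Omega)$ by density, exactly as in Lemma \ref{lm:mt-det}.
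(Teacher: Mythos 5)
Your proof is correct and relies on the same two ingredients as the paper's: a trace-Poincar\'e inequality of the form $\|w\|_{L^2(\Omega)}^2 \lesssim \|\nabla w\|_{L^2(\Omega)}^2 + \|w\|_{L^2(\partial\Omega)}^2$ applied componentwise to $\nabla v$, and the Miranda-Talenti identity \eqref{eq:mt-det}. The paper obtains the trace-Poincar\'e bound exactly by the second route you sketch, the divergence theorem applied to $(x_1(\partial_1 v)^2, x_2(\partial_2 v)^2)$, which is preferable to the compactness argument here since it produces an explicit constant depending only on $\Omega$, as the statement requires. The only genuine difference is in how the two ingredients are combined: the paper multiplies the identity by a tunable scalar $A$ and chooses $A = \max\{C_2/(C_1\chi_0),1\}$ so that the boundary integral acquires a non-positive coefficient, whereas you first isolate the boundary bound $\int_{\partial\Omega}|\nabla v|^2\,\ds \le \chi_0^{-1}\|\Delta v\|_{L^2(\Omega)}^2$ and then reinsert it, using \eqref{eq:mt-obl} separately for the interior $\|D^2 v\|_{L^2}^2$ term. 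The two arrangements are essentially equivalent (yours gives constant $C_1 + C_2/\chi_0$, the paper's $\max\{C_1, C_2/\chi_0\}$); your version is a bit more modular, while the paper's is a one-shot computation.
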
 
\begin{proof}
  Applying the divergence Theorem to the field 
  $(x_{1}(\partial_{1} v)^{2},x_{2}(\partial_{2} v)^{2})$ and employing the Young inequality, we get the existence of two positive constants $C_{1}$ and $C_{2}$ (depend on $\Omega$), such that
  \begin{equation}\label{eq:grad-esti-1} 
  \int_{\Omega}|\nabla v|^{2}\dx \leq 
  C_{1}\int_{\Omega}|D^{2}v|^{2}\dx + 
  C_{2}\int_{\partial\Omega}|\nabla v|^{2}\ds. 
  \end{equation} 
  For any $A \geq 1$, using Lemma \ref{lm:mt-det}, we have
  \begin{equation}\label{eq:grad-esti-2} 
  \int_{\Omega}|D^{2}v|^{2}\dx \leq A\int_{\Omega}|D^{2}v|^{2}\dx =  
  A\int_{\Omega}(\Delta v)^{2}\dx - 
  A\int_{\partial \Omega}|\nabla v|^{2}(\dot{\theta} - \chi)\ds.
  \end{equation} 
  Substituting \eqref{eq:grad-esti-2} into \eqref{eq:grad-esti-1} and combining it with the condition $\chi_{0} = \min_{\p\Omega}(\dot{\theta} - \chi) >0$, we have
  $$ \int_{\Omega}|\nabla v|^{2}\dx \leq 
  C_{1}A\int_{\Omega}(\Delta v)^{2}\dx + 
  (C_{2} - C_{1}A\chi_{0}) 
  \int_{\partial\Omega}|\nabla v|^{2}\ds.$$
  Then \eqref{eq:grad-esti} is obtained by choosing $A = \max\{ \frac{C_{2}}{C_{1}\chi_{0}},1\}$.
\end{proof}

We find that the semi-norm $|\cdot|_{H^{2}(\Omega)}$ is indeed a norm on $H^{2}_{\ell,0}(\Omega)$ by combining Lemma \ref{lm:grad-esti} (gradient estimate) and the Poincar\'{e} inequality, i.e., for any $v \in H^{2}_{\ell,0}(\Omega)$ 
$$ \begin{aligned}
  \|v\|_{H^{2}(\Omega)}^{2} &= 
  \|v\|_{L^{2}(\Omega)}^{2} + |v|^{2}_{H^{1}(\Omega)} + |v|^{2}_{H^{2}(\Omega)} \\ 
  & \lesssim |v|^{2}_{H^{1}(\Omega)} + |v|^{2}_{H^{2}(\Omega)} ~~~~~~~~~~~~~~~~~(\mbox{Poincar\'{e} inequality})\\
  & \lesssim \|\Delta v\|^{2}_{L^{2}(\Omega)} + |v|^{2}_{H^{2}(\Omega)} \lesssim |v|^{2}_{H^{2}(\Omega)}. ~~~~~~~(\mbox{Lemma \ref{lm:grad-esti}}) 
\end{aligned} $$
%%%%%%%%%%%%%%%%%%%%%%%%%%%%%%
%variational form
%%%%%%%%%%%%%%%%%%%%%%%%%%%%%%
Now in the Hilbert space $(H^{2}_{\ell, 0}(\Omega), |\cdot|_{H^{2}(\Omega)})$, we are ready to define the bilinear form $b: H^{2}_{\ell,0}(\Omega) \times H^{2}_{\ell,0}(\Omega) \rightarrow \mathbb{R}$ as 
\begin{equation}\label{eq:bilinear-oblique} 
b(w,v) := \int_{\Omega} \gamma A:D^{2}w \Delta v\mathrm{d}x. 
\end{equation} 
Lemma \ref{lm:cordes} (property of Cordes condition) and Lemma \ref{lm:mt-obl} (Miranda-Talenti estimate) imply the coercivity, i.e., for any $v\in H^{2}_{\ell,0}(\Omega)$
\begin{equation}\label{eq:coer-b} 
  \begin{aligned}
    b(v,v) & = \int_{\Omega} (\Delta v)^{2}\dx + \int_{\Omega} (\gamma A - I):D^{2}v \Delta v \dx  \\ 
    & \geq \|\Delta v\|^{2}_{L^{2}(\Omega)} - \sqrt{1 - \varepsilon}\|D^{2}v\|_{L^{2}(\Omega)}\|\Delta v\|_{L^{2}(\Omega)} 
     \geq (1 - \sqrt{1 - \varepsilon}) |v|_{H^{2}(\Omega)}^{2}.    
  \end{aligned} 
\end{equation} 
The variational form of problem \eqref{eq:obl-der-problem} reads: Find $u \in H^{2}_{\ell, 0}(\Omega)$ such that  
\begin{equation}\label{eq:var-obl-der-problem} 
b(u,v) = l(v) \qquad \forall v \in H^{2}_{\ell, 0}(\Omega),
\end{equation} 
where $l(v) =  \int_{\Omega}\gamma f \Delta v \dx$ is a linear functional on $H^{2}_{\ell,0}(\Omega)$. 
%%%%%%%%%%%%%%%%%%%%%%%%%%%%%%
%H2 well-posedness
%%%%%%%%%%%%%%%%%%%%%%%%%%%%%%
Before claiming the well-posedness result for \eqref{eq:obl-der-problem}, we summarize the assumptions on the data as follows.
\begin{assumption}\label{ass:problem}
  Let $A \in L^{\infty}(\Omega;\mathbb{R}^{2 \times 2})$, satisfy the uniform ellipticity \eqref{eq:uniform-elliptic}. 
  $\Omega$ is assumed to be a bounded domain with $C^{2}$ boundary. 
  The oblique vector field $\ell$ is of class $C^{1}$ with unit length, and \eqref{eq:winding-num} is satisfied.  Furthermore, it is assumed that $\chi_{0} = \displaystyle\min_{\p\Omega}(\dot{\theta} - \chi) > 0$.
\end{assumption} 
\begin{theorem}[well-posedness]
  \label{thm:well-posed-obl}Under Assumption \ref{ass:problem}, for any given $f \in L^{2}(\Omega)$, there exists a unique solution $u \in H^{2}_{\ell,0}(\Omega)$ to the variational problem \eqref{eq:var-obl-der-problem}. Furthermore, $u$ is the $H^{2}$ strong solution to the oblique derivative problem \eqref{eq:obl-der-problem}.
\end{theorem}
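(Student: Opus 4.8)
The plan is to apply the Lax–Milgram theorem in the Hilbert space $(H^{2}_{\ell,0}(\Omega), |\cdot|_{H^{2}(\Omega)})$ to the variational problem \eqref{eq:var-obl-der-problem}, and then to upgrade the resulting variational solution to a strong solution of \eqref{eq:obl-der-problem}. First I would record that $|\cdot|_{H^{2}(\Omega)}$ is indeed a norm on $H^{2}_{\ell,0}(\Omega)$ equivalent to the full $H^{2}$ norm — this was established just above via Lemma \ref{lm:grad-esti} (gradient estimate) and the Poincaré inequality — and that $H^{2}_{\ell,0}(\Omega)$ is a closed subspace of $H^{2}(\Omega)$, hence complete. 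The coercivity of $b$ with constant $1 - \sqrt{1-\varepsilon} > 0$ is exactly \eqref{eq:coer-b}; boundedness of $b$ follows from the Cauchy–Schwarz inequality together with $\|\gamma A\|_{L^{\infty}} \lesssim 1$ (a consequence of uniform ellipticity), so that $|b(w,v)| \lesssim |w|_{H^{2}(\Omega)} |v|_{H^{2}(\Omega)}$. The functional $l(v) = \int_{\Omega} \gamma f \Delta v \dx$ is bounded on $H^{2}_{\ell,0}(\Omega)$ since $|l(v)| \leq \|\gamma\|_{L^{\infty}} \|f\|_{L^{2}(\Omega)} \|\Delta v\|_{L^{2}(\Omega)} \lesssim \|f\|_{L^{2}(\Omega)} |v|_{H^{2}(\Omega)}$. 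Lax–Milgram then yields a unique $u \in H^{2}_{\ell,0}(\Omega)$ solving \eqref{eq:var-obl-der-problem}.

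It remains to show this $u$ is the $H^{2}$ strong solution, i.e. that $A:D^{2}u = f$ a.e. in $\Omega$ and $\ell\cdot\nabla u = c$ on $\partial\Omega$ for some constant $c$. The boundary condition holds by definition of $H^{2}_{\ell,0}(\Omega)$: $\ell\cdot\nabla u$ is constant on $\partial\Omega$, and we simply name that constant $c$. For the PDE, the key observation is that $\gamma$ is strictly positive and bounded away from $0$, so $A:D^{2}u = f$ a.e. is equivalent to $\gamma A:D^{2}u = \gamma f$ a.e., i.e. to the statement that $w := \gamma A:D^{2}u - \gamma f \in L^{2}(\Omega)$ vanishes. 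From \eqref{eq:var-obl-der-problem} we know $\int_{\Omega} w\, \Delta v \dx = 0$ for all $v \in H^{2}_{\ell,0}(\Omega)$. I would test against a well-chosen family of $v$'s: since $C^{\infty}_{c}(\Omega) \subset H^{2}_{\ell,0}(\Omega)$ up to subtracting the mean (and the mean-subtraction does not affect $\Delta v$), we get $\int_{\Omega} w\, \Delta v \dx = 0$ for all $v \in C^{\infty}_{c}(\Omega)$, hence $\Delta w = 0$ in the sense of distributions. Then one argues $w$ is a (weak, hence by elliptic regularity smooth) harmonic function; combined with testing against $v$ with nonzero boundary data one extracts a Neumann-type condition forcing $w$ to be constant, and the constant must be zero because testing against $v \equiv 1$ (after mean-zero adjustment, or directly noting $\Delta 1 = 0$) together with the compatibility structure pins it down. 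The cleaner route, which is the one I would actually carry out, is to note that the map $v \mapsto \Delta v$ from $H^{2}_{\ell,0}(\Omega)$ into $L^{2}_{0}(\Omega) := \{g \in L^{2}(\Omega): \int_\Omega g = 0\}$ is an isomorphism: surjectivity onto $L^2_0$ follows by solving a Poisson problem with oblique boundary data (using the same Miranda–Talenti machinery, or classical elliptic theory on the $C^2$ domain), and injectivity from the Poincaré-type estimate. Granting this, $\int_{\Omega} w(\Delta v) = 0$ for all $v$ forces $w \perp L^{2}_{0}(\Omega)$, so $w$ is constant; and the variational equation for the constant test function (equivalently, integrating $A:D^2 u = f + \text{const}$ and using a Green-type identity against $1$, which kills the non-divergence operator modulo boundary terms handled by the oblique condition) forces that constant to vanish, giving $A:D^{2}u = f$ a.e.

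The main obstacle is the last step — promoting the variational identity to the pointwise PDE, which requires showing that $\Delta(H^{2}_{\ell,0}(\Omega))$ is dense in (indeed equal to) $L^{2}_0(\Omega)$, or equivalently that the only $L^2$ function orthogonal to all such Laplacians is zero. This is where the geometric hypothesis $\chi_0 > 0$ and the resulting solvability of the oblique Poisson problem genuinely enter, beyond their role in the coercivity estimate; everything else (Lax–Milgram, boundary condition, the role of $\gamma$) is routine. I would handle this surjectivity either by citing the classical $H^2$ solvability theory for the oblique derivative problem for the Laplacian on a $C^2$ domain under \eqref{eq:winding-num} and $\chi_0 > 0$ (as in \cite{maugeri2000elliptic}), or by a self-contained argument: given $g \in L^2_0(\Omega)$, the Lax–Milgram argument already applied with $A = I$ produces $v \in H^2_{\ell,0}(\Omega)$ with $\int_\Omega \Delta v \Delta \phi = \int_\Omega g\, \Delta\phi$ for all $\phi$, and then a bootstrapping/duality argument identifies $\Delta v = g$.
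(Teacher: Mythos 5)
Your Lax--Milgram step is correct and matches the paper: verify boundedness and coercivity of $b(\cdot,\cdot)$ and boundedness of $l(\cdot)$ on the Hilbert space $(H^{2}_{\ell,0}(\Omega),|\cdot|_{H^{2}(\Omega)})$ and conclude existence and uniqueness. The issue is in the upgrade to the pointwise PDE, and it is not merely cosmetic.

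You assert that $v\mapsto\Delta v$ maps $H^{2}_{\ell,0}(\Omega)$ into $L^{2}_{0}(\Omega)$ and is an isomorphism onto that subspace. This is not the right range, and the paper's proof hinges on getting it right. The constraint $\int_{\Omega}v\,\dx=0$ is a normalization on $v$, not on $\Delta v$; for $v\in H^{2}_{\ell,0}(\Omega)$ one has $\int_{\Omega}\Delta v\,\dx=\int_{\p\Omega}\p_{n}v\,\ds$, and this integral is not pinned to zero because $\ell\cdot\nabla v$ is only required to be \emph{some} constant $c$, with $c$ free. You are implicitly importing the Neumann compatibility condition $\int_{\Omega}g=0$, which does not apply here. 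The correct statement --- and the one the paper cites from \cite[pp.~49--50]{maugeri2000elliptic}, under the winding-number condition \eqref{eq:winding-num} and $\chi_0>0$ --- is that for \emph{every} $z\in L^{2}(\Omega)$ there is a unique $v_z\in H^{2}_{\ell,0}(\Omega)$ with $\Delta v_z=z$ a.e.; the index-zero feature of the oblique problem, with $c$ free to absorb compatibility, is exactly what makes the range all of $L^{2}(\Omega)$ rather than a codimension-one subspace. Once this is used, testing $\int_{\Omega}w\,\Delta v_z\,\dx=0$ for all $z\in L^{2}(\Omega)$ immediately gives $w=0$, and the extra constant-elimination argument you sketch (which, as written, is vague --- "integrating $A:D^{2}u=f+\text{const}$ and using a Green-type identity against $1$" is not a well-defined step for a non-divergence operator with $L^{\infty}$ coefficients) becomes unnecessary. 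In short: replace $L^{2}_{0}(\Omega)$ by $L^{2}(\Omega)$ in your surjectivity claim, cite the corresponding result in \cite{maugeri2000elliptic}, and the proof closes in one line; as stated, your argument relies on a false factorization of the range and a hand-waved step to kill the leftover constant.
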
 
\begin{proof}
  It is easy to verify that the linear form $l(\cdot)$ and the bilinear form $b(\cdot, \cdot)$ are bounded on the Hilbert space $(H^{2}_{\ell,0}(\Omega), |\cdot|_{H^{2}(\Omega)})$. Then the coercivity of $b(\cdot,\cdot)$ and the Lax-Milgram Theorem imply the existence of a unique solution $u \in H^{2}_{\ell,0}(\Omega)$ to the variational problem \eqref{eq:var-obl-der-problem}. 

  Note that for any given $z \in L^{2}(\Omega)$, the condition \eqref{eq:winding-num} implies that there exists a unique $v_{z} \in H^{2}_{\ell,0}(\Omega)$ such that $\Delta v_{z} = z$, a.e. in $\Omega$ (see \cite[Page. 49-50]{maugeri2000elliptic} for details). Therefore, 
  \begin{equation*} 
  \begin{aligned}
  \int_{\Omega}\left(\gamma A:D^{2}u - \gamma f \right) z \dx 
  &=\int_{\Omega}\left(\gamma A:D^{2}u - \gamma f \right) \Delta v_{z} \dx 
  =b(u,v_{z}) - l(v_{z}) = 0. 
  \end{aligned}
  \end{equation*} 
  Then the fundamental Lemma of the calculus of variations yields 
  $\gamma A:D^{2}u - \gamma f = 0$, which leads to 
  $A:D^{2}u = f$, a.e. in $\Omega$ since $\gamma$ is uniformly positive. 
\end{proof}

\section{Finite element space and discrete Miranda-Talenti-type identity}\label{sec:fem}
In this section, we shall construct the finite element space for approximating \eqref{eq:obl-der-problem}. More precisely, we adopt the curved Hermite elements on exact triangulations of the domain $\Omega$.  The discrete Miranda-Talenti-type identity for oblique boundary conditions is therefore established through the $C^{1}$-continuity on the vertices. 

\subsection{Exact triangulation of the curved domain}
%%%%%%%%%%%%%%%%%%%%%%%%%%%%%%%%%%%%%%%%%%%%
%curved triangle
%%%%%%%%%%%%%%%%%%%%%%%%%%%%%%%%%%%%%%%%%%%%
Since $\ell$ is only defined on $\p\Omega$, the triangulation needs to strictly fit the curved boundary of the domain $\Omega$. 
For this purpose, the domain $\Omega$ is exactly triangulated by $\T_{h}$, i.e., 
$\T_{h}$ consists of curved triangles near the boundary with at most one truly curved edge (fits the curved boundary).
In the interior of $\Omega$, $\T_{h}$ consists of straight triangles. 
A formal definition of the curved triangle is stated as follows \cite{bernardi1989optimal}. 
\begin{definition}[curved triangle] 
  \label{def:curved-triangle}A closed set $K \subset \mathbb{R}^{2}$ is a curved triangle if there exists a $C^{1}$ mapping $F_{K}$ that maps a straight reference triangle $\hat{K}$ onto $K$ and that is of the form
\begin{equation}\label{eq:FK} 
    F_{K} = \tilde{F}_{K} + \Phi_{K}, 
\end{equation} 
where $\tilde{F}_{K}:\hat{x} \mapsto B_{K} \hat{x} + b_{K}$ is an invertible affine mapping and $\Phi_{K}$ is a $C^{1}$ mapping satisfying
\begin{equation}\label{eq:cK} 
    c_{K} := \displaystyle\sup_{\hat{x} \in \hat{K}} \|D\Phi_{K} B_{K}^{-1}\|  < 1. 
\end{equation} 
\end{definition}

% \begin{remark}[straight reference triangle $\hat{K}$]
%   \label{rm: independent-hatK}Note that in Definition \ref{def:curved-triangle} \\(curved triangle), the straight reference triangle $\hat{K}$ is not uniquely determined. 
%   This is because that if we introduce another straight reference triangle $\hat{K}^{'}$ and denote by $\hat{A}(\cdot) + \hat{a}$ the invertible affine mapping that maps $\hat{K}^{'}$ onto $\hat{K}$, then the mapping $F_{{K}}^{'} = F_{K} \circ (\hat{A}(\cdot) + \hat{a}) $ maps $\hat{K}^{'}$ onto $K$ and is of the following form:
%   $$ F_{K}^{'} = 
%   (B_{K}^{'}(\cdot) + b_{K}^{'}) + \Phi_{K}^{'} \quad 
%   \text{with } B_{K}^{'} = B_{K}\hat{A} \quad 
%   \text{and } \Phi_{K}^{'}= \Phi_{K}\circ(\hat{A}(\cdot) + \hat{a}), $$
%   so that 
%   $$\sup_{\hat{x}^{'}\in\hat{K}^{'}}\|D\Phi_{K}^{'}(B_{K}^{'})^{-1}\| 
%    =\sup_{\hat{x} \in \hat{K}} \|D\Phi_{K}\hat{A} \hat{A}^{-1} (B_{K})^{-1}\| 
%    = c_{K}.$$ 
%   Using this fact, when we consider a family of triangulations $\{ \mathcal{T}_{h} \}_{h > 0}$ containing different curved triangles, 
%   we can specify a fixed straight reference triangle $\hat{K}$ for all the different curved triangles $K$ in $\{\mathcal{T}_{h}\}_{h>0}$. In other words, $\hat{K}$ is chosen independently of $K \in \mathcal{T}_{h}$ and $h$.      
% \end{remark}

Let $\tilde{K} := \tilde{F}_{K}(\hat{K})$ be a straight triangle that ``approximates" the curved triangle $K$. 
Note that if the mapping $\Phi_{K}$ is equal to zero, $K = \tilde{K}$ is a straight triangle.
The mesh size of $K$ is defined as $h_{K} := \op{diam}(\tilde{K})$. A simple calculation shows that (c.f. \cite[Lemma 3.1]{bernardi1989optimal})
\begin{equation}\label{eq:diamK-hK} 
(1 - c_{K}) h_{K} \leq \op{diam}(K) \leq (1 + c_{K}) h_{K}.
\end{equation}  
Denote $h := \max_{K \in \mathcal{T}_{h}}h_{K}$. 
Let $\mathcal{N}_{h}$ be the set of vertices in $\mathcal{T}_{h}$, $\mathcal{N}_{h}^{\partial} := \mathcal{N}_{h} \cap \partial \Omega$ and $\mathcal{N}^{i}_{h} := \mathcal{N}_{h} \backslash \mathcal{N}^{\partial}_{h}$. 
Denote $\mathcal{F}_{h}$ the set of edges in $\mathcal{T}_{h}$. Let $\mathcal{F}^{\partial}_{h} := \mathcal{F}_{h} \cap \partial \Omega$ and $\mathcal{F}^{i}_{h} := \mathcal{F}_{h} \backslash \mathcal{F}^{\partial}_{h}$. 
For each edge $F \in \F_{h}^{i}$, we define $h_{F} := \min (h_{K}, h_{K^{\prime}})$ where $K$ and $K^{\prime}$ are such that $F = \p K\cap\p K^{\prime}$. If $F \in \F^{\p}_{h}$, we define $h_{F} := h_{K}$ where $K$ is such that $F = \p K\cap\p\Omega$.

%%%%%%%%%%%%%%%%%%%%%%%%%%%%%%%%%%%%%%%%%%%%
%Triangulation
%%%%%%%%%%%%%%%%%%%%%%%%%%%%%%%%%%%%%%%%%%%%
%%% shape regular %%%
A family of conforming triangulations $\{\T_{h}\}_{h>0}$ is said to be shape-regular if there exist two constants $\sigma$ and $c_{1}$ independent of $h$ such that, 
  $$ \sup_{h}\sup_{K\in\T_{h}}\frac{h_{K}}{\rho_{K}} \leq \sigma 
  \quad \text{ and } \quad 
  \sup_{h} \sup_{K \in \mathcal{T}_{h}} c_{K} \leq c_{1} < 1. 
  $$
Here, $\rho_{K}$ is the diameter of the sphere inscribed in $\tilde{K}$.
In the standard finite element theory, the Sobolev space $W^{m}_{p}(K)$ is usually related to $W^{m}_{p}(\hat{K})$ by the push-forward mapping, i.e.,  $ \hat{v} = F_{K}^{*} v = v\circ F_{K}$.
To reproduce this in the curved triangle case, we need to assume some smoothness on $F_{K}$.  
\begin{definition}[class $m$ triangulation] \label{def:triangulation-m}
  The family $\{\T_{h}\}_{h>0}$ is said to be regular of order $m$ if it is shape-regular and if, for each $h$, any $K \in \T_{h}$ the mapping $F_{K}$ is of class $C^{m+1}$, with 
  $$ \sup_{h} \sup_{K \in \T_{h}} c_{i}(K) \leq c_{i} < \infty, \qquad 2 \leq i \leq m+1, $$
where 
$ c_{i}(K) := |F_{K}|_{W^{i}_{\infty}(\hat{K};\mathbb{R}^{2})}\|B_{K}\|^{-i}.$
\end{definition} 
\begin{remark}[construction of $F_{K}$]
  \label{rem:construction-F_K}The construction of $F_{K}$ is specified in \cite[Section 6]{bernardi1989optimal}. 
  Here, we emphasize that the construction of $F_{K}$ guarantees the following fact: if $F$ is a straight edge of the curved triangle $K$, then for any edge $\hat{F}$ of $\hat{K}$, 
    $$ \text{$F_{K}|_{\hat{F}}: \hat{F} \rightarrow F$ is an affine mapping.} $$
 Further, to construct $\{\T_{h}\}_{h>0}$ with order $m$, the 
  mapping $F_{K}$ needs to be $C^{m+1}$, which requires the boundary $\partial \Omega$ to be piecewise $C^{m+1}$. 
\end{remark}

\subsection{Finite element space}\label{subsec:fem_space}
%%%%%%%%%%%%%%%%%%%%%%%%%%%%%%%%%%%%%%%%%%%%
%Finite element space(local)
%%%%%%%%%%%%%%%%%%%%%%%%%%%%%%%%%%%%%%%%%%%%
Following the idea in \cite{wu2021c0}, the key to designing the finite element space is the $C^{1}$-continuity on the vertices. 
Since the discretization is based on the exact triangulation of the domain $\Omega$, we adopt the technique in \cite{bernardi1989optimal} to consider the curved Hermite element.
To this end, we start with the standard Hermite element on straight triangles.
%%% Hermite element on the straight reference triangle %%%
\paragraph{\underline{\textit{Hermite element on the straight reference triangle}}}For the straight reference triangle $\hat{K}$, the shape function space is given as $\mathcal{P}_{k}(\hat{K})$ ($k\geq 3$), where $\mathcal{P}_{k}(\hat{K})$ denotes the set of polynomials with total degree not exceeding $k$ on $\hat{K}$. The set of degrees of freedom $\hat{\Sigma}$ is defined as follows:  
\begin{itemize}
  \item Function value $\hat{v}(\bm{\hat{a}})$ and first order derivatives
  $\hat{\partial}_i \hat{v}(\bm{\hat{a}})$, $i=1,2$ at each vertex;
  \item Function values at $(k - 3)$ nodes on the interior of each edge $\hat{e}$;
  \item Function values at $\frac{(k-1)(k-2)}{2}$ nodes in the interior of $\hat{K}$.
\end{itemize}
It is simple to check that the degrees of freedom given above form a unisolvent set \cite{brenner2007mathematical}.
%%%%% Hermite element on curved triangles %%%%
\paragraph{\underline{\textit{Curved Hermite element}}}
The shape function space for a curved triangle $K$ is given as
$$ P_{K} := \{ v = \hat{v} \circ F_{K}^{-1}, \hat{v} \in \mathcal{P}_{k}(\hat{K})\}, \quad k \geq 3. $$
The set of degrees of freedom $\Sigma_{K}$ is defined as follows: 
\begin{itemize}
  \item Function value $v(\bm{a})$ and 
        first order derivatives $\partial_i v(\bm{a})$, $i=1,2$ at each vertex;
  \item Function values at $(k - 3)$ nodes on the interior of each edge $e$;
  \item Function values at $\frac{(k-1)(k-2)}{2}$ nodes in the interior of $K$.
\end{itemize}
In $\Sigma_{K}$, 
the choice of the nodes on the edge or in the triangle are induced by the nodes in $\hat{\Sigma}$ through $F_K$. 
%needs to be consistent with $\hat{\Sigma}$, i.e., 
%if $\bo{p} \in \overline{K}$ is the node of some degree of freedom in $\Sigma_{K}$, then it holds that
%$ \bo{p} = F_{K}(\bh{p}), $
%  where $\bh{p} \in \overline{\hat{K}}$ is the node of the corresponding degree of freedom in $\hat{\Sigma}$.

We now verify the unisolvent property. 
First, the dimension of the shape function space is $\dim P_{K} = \dim \mathcal{P}_{k}(\hat{K}) = \frac{(k+1)(k+2)}{2}$, which coincides with the number of the degrees of freedom.
%$$ 3 \times 3 + 3 \times (k-3) + \frac{(k-1)(k-2)}{2} = \frac{(k+1)(k+2)}{2}. $$
Therefore, it suffices to show that $v \in P_{K}$ vanishes if it vanishes at all the degrees of freedom in $\Sigma_{K}$. By the definition of $P_{K}$, we know that $v = \hat{v}\circ F_{K}^{-1}$ for some $\hat{v} \in \mP_{k}(\hat{K})$. 
Note that $v$ and its first derivatives vanish at the three vertices of $K$, by the chain rule, we immediately know that $\hat{v}$ and its first derivatives also vanish at the three vertices of $\hat{K}$.
Next, the vanishing of the remaining degrees of freedom in $\Sigma_{K}$ ensures that $\hat{v}$ has $\frac{(k-1)(k-2)}{2}$ zero points inside $\hat{K}$ and $(k - 3)$ zero points inside each edge $\hat{e}$.
Following the unisolvent property of $(\hat{K}, \mP_{k}(\hat{K}), \hat{\Sigma})$, $\hat{v}$ vanishes. Therefore, $v = \hat{v} \circ F_{K}^{-1} \equiv 0$. 

%%%%%%%%%%%%%%%%%%%%%%%%%%%%%%%%%%%%%%%%%%%%
%Finite element space(global)
%%%%%%%%%%%%%%%%%%%%%%%%%%%%%%%%%%%%%%%%%%%%
The degrees of freedom ensure that the global finite element space has the $C^{0}$-continuity in $\Omega$ and $C^{1}$-continuity on the vertices for every triangulation $\T_{h}$, i.e.,  
$$ V_{h} = \{ v \in H^{1}(\Omega): v|_{K} \in P_{K}, \forall K \in \mathcal{T}_{h}, \text{ $v$ is $C^{1}$ at all vertices} \}. $$
Enforcing the boundary conditions, we define 
$$ V^{\ell}_{h,0} := \{ v_{h} \in V_{h}, \int_{\Omega} v_{h} \mathrm{d}x = 0 ,\nabla v_{h}(x_{i}) \cdot \ell \text{ is constant } \forall x_{i} \in \mathcal{N}_{h}^{\partial} \}. $$
\begin{remark}[regularity of $\T_{h}$]
  To get the optimal approximation property, we need to assume that $\{\T_{h}\}_{h>0}$ is regular of order $k$, which is consistent with the degree of the polynomial involved in $V_{h}$.
\end{remark}

%%%%%%%%%%%%%%%%%%%%%%%%%%%%%%%%%%%%%%%%%%%%
%Some local estimate
%%%%%%%%%%%%%%%%%%%%%%%%%%%%%%%%%%%%%%%%%%%%
The following lemmas deal with the scaling argument and the trace estimate. We refer to \cite{bernardi1989optimal} for the proof. 
%%%%% scaling argument %%%%
\begin{lemma}[scaling on curved triangles, see Lemma 2.3 in \cite{bernardi1989optimal}]
  \label{lm:scaling}Let $\{\mathcal{T}_{h}\}_{h>0}$ be regular of order $k$. Let $s$ be an integer, $0 \leq s \leq k+1$, and $p \in [1, \infty]$. For any $K \in \mathcal{T}_{h}$, a function $v$ belongs to $W^{s}_{p}(K)$ if and only if the function $\hat{v} = v \circ F_{K}$ belongs to $W^{s}_{p}(\hat{K})$, and we have
\begin{subequations} \label{eq:scaling} 
\begin{align}
|v|_{W^{s}_{p}(K)} &\leq C_{\rm{S}} 
|\det B_{K}|^{1/p}\|B^{-1}_{K}\|^{s} 
\left(\sum_{i=\min(s,1)}^{s}\|B_{K}\|^{(s - i)}|\hat{v}|_{W^{i}_{p}(\hat{K})} \right), \label{eq:scaling-a} \\
|\hat{v}|_{W^{s}_{p}(\hat{K})} &\leq C_{\rm{S}} 
|\det B_{K}|^{-1/p}\|B_{K}\|^{s} 
\left(\sum_{i=\min(s,1)}^{s} |v|_{W^{i}_{p}(K)} \right), \label{eq:scaling-b}
\end{align}
\end{subequations} 
where $C_{\rm{S}}$ depends continuously on $c_{1}, c_{2}, \cdots, c_{k+1}$ given in Definition \ref{def:triangulation-m}.
\end{lemma}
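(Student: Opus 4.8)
The plan is to derive both the space equivalence and the two estimates \eqref{eq:scaling} from the change of variables $x=F_K(\hat x)$ together with the multivariate chain rule (Fa\`a di Bruno), so that everything is reduced to uniform $L^\infty$-bounds on the derivatives of $F_K$ and of $F_K^{-1}$ in terms of $\|B_K\|$, $\|B_K^{-1}\|$, the constants $c_i(K)$ of Definition~\ref{def:triangulation-m}, and the shape-regularity constant $\sigma$. Once such bounds are available, the equivalence $v\in W^s_p(K)\iff\hat v\in W^s_p(\hat K)$ is immediate, because every coefficient appearing in the chain-rule expansion of $D^\alpha(v\circ F_K)$ and of $D^\alpha(\hat v\circ F_K^{-1})$ is then bounded, so $W^s_p$-membership transfers in both directions for $|\alpha|\le s\le k+1$ (here one also uses that $F_K\in C^{k+1}$ forces $F_K^{-1}\in C^{k+1}$ by the inverse function theorem).

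First I would record the bounds on $F_K$ itself. Writing $DF_K=(I+D\Phi_KB_K^{-1})B_K$ and using $c_K\le c_1<1$, one gets $\|DF_K\|_{L^\infty(\hat K)}\le(1+c_1)\|B_K\|$ and $\|(DF_K)^{-1}\|_{L^\infty(\hat K)}\le(1-c_1)^{-1}\|B_K^{-1}\|$, while every eigenvalue of $D\Phi_KB_K^{-1}$ has modulus at most $c_1$, hence $|\det DF_K|\eqsim|\det B_K|$ with constants depending only on $c_1$; for $2\le i\le k+1$ the definition of a class-$k$ triangulation gives outright $|F_K|_{W^i_\infty(\hat K;\mathbb R^2)}=c_i(K)\|B_K\|^i\le c_i\|B_K\|^i$. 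The step I expect to be the real obstacle is the companion estimate for the inverse,
\[ |F_K^{-1}|_{W^i_\infty(K;\mathbb R^2)}\lesssim\|B_K^{-1}\|^i\|B_K\|^{i-1},\qquad 1\le i\le k+1, \]
since $F_K^{-1}$ carries no a priori smallness and this (non-obvious) scaling has to be extracted by induction on $i$: one differentiates the identity $F_K\circ F_K^{-1}=\mathrm{id}$ exactly $i$ times, isolates the top-order term $(DF_K\circ F_K^{-1})\,D^iF_K^{-1}$, and controls the remaining Fa\`a di Bruno terms---each a product of lower-order derivatives of $F_K^{-1}$ (covered by the induction hypothesis) composed with derivatives of $F_K$ (covered above). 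At every stage extra powers of $\|B_K\|\,\|B_K^{-1}\|$ accumulate, and these must be absorbed using shape-regularity, $\|B_K\|\,\|B_K^{-1}\|\lesssim\sigma$; this is precisely where that hypothesis enters.

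With these pointwise bounds in hand the inequalities follow by routine computation. For $s=0$ and $1\le p<\infty$ the change of variables gives $\|v\|_{L^p(K)}^p=\int_{\hat K}|\hat v|^p|\det DF_K|\,\mathrm{d}\hat{x}\lesssim|\det B_K|\,\|\hat v\|_{L^p(\hat K)}^p$ (and $\|v\|_{L^\infty(K)}=\|\hat v\|_{L^\infty(\hat K)}$, matching $|\det B_K|^{1/p}=1$ at $p=\infty$). For $1\le s\le k+1$, Fa\`a di Bruno applied to $v=\hat v\circ F_K^{-1}$ writes $D^sv$, pointwise on $K$, as a sum over $1\le j\le s$ of $(D^j\hat v)\circ F_K^{-1}$ times a $j$-fold product of derivatives of $F_K^{-1}$ of total order $s$; by the inverse bound each such product is $\lesssim\|B_K^{-1}\|^s\|B_K\|^{s-j}$, so raising to the $p$-th power, integrating over $K$, reverting the change of variables (Jacobian $\lesssim|\det B_K|$) and taking $p$-th roots yields \eqref{eq:scaling-a}; the case $p=\infty$ is identical without the Jacobian. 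Estimate \eqref{eq:scaling-b} is obtained symmetrically, using only $|D^lF_K|\lesssim\|B_K\|^l$ from the first step, which is why no power $\|B_K\|^{s-i}$ survives inside the sum there. For the detailed bookkeeping I would refer to \cite[Lemma~2.3]{bernardi1989optimal}.
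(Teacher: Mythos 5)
The paper itself does not prove this lemma: it cites it directly to Bernardi \cite{bernardi1989optimal}, Lemma~2.3, so there is no ``paper proof'' to compare against line by line. Your sketch follows the standard route of that reference: change of variables plus the multivariate Fa\`a di Bruno expansion, reduced to $L^{\infty}$ bounds on $D^{a}F_{K}$ (immediate from Definition~\ref{def:triangulation-m}) and on $D^{a}F_{K}^{-1}$ (proved by induction from the differentiated identity $F_{K}\circ F_{K}^{-1}=\mathrm{id}$). That is the right decomposition, and your observation that the product structure $\prod_{m}|D^{a_m}F_K^{-1}|\lesssim \|B_K^{-1}\|^{s}\|B_K\|^{s-j}$ is exactly what produces the $\|B_K\|^{s-i}$ weights inside the sum in \eqref{eq:scaling-a} is the key bookkeeping step.

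One point deserves a caveat, though, and it concerns the dependence of $C_{\rm S}$. Your target bound $|F_K^{-1}|_{W^{a}_\infty(K)}\lesssim\|B_K^{-1}\|^{a}\|B_K\|^{a-1}$ does not come out of the induction on its own: already for $a=2$, $D^2F_K^{-1}=-(DF_K)^{-1}\,D^2F_K\,(DF_K^{-1},DF_K^{-1})$ gives $|D^2F_K^{-1}|\lesssim c_2\,\|B_K^{-1}\|^{3}\|B_K\|^{2}=c_2\,\kappa(B_K)\,\|B_K^{-1}\|^{2}\|B_K\|$, one extra factor of the condition number $\kappa(B_K)=\|B_K\|\,\|B_K^{-1}\|$, and the induction propagates a power $\kappa(B_K)^{a-1}$. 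You acknowledge this and absorb it with shape regularity, which is fine as far as it goes, but it means the $C_{\rm S}$ your argument produces depends on $\sigma$ as well as on $c_1,\dots,c_{k+1}$, whereas the lemma as stated (and in Bernardi) asserts dependence on $c_1,\dots,c_{k+1}$ only. For this paper the distinction is immaterial --- the version of \eqref{eq:scaling} that is actually used in the sequel is the one rewritten in powers of $h_K$, whose hidden constant is explicitly declared to depend on $\sigma$ as well --- but if you want to reproduce the statement verbatim you should either track why the condition-number powers cancel in Bernardi's bookkeeping or explicitly weaken the claim to include $\sigma$-dependence.
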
  

By the shape regularity of $\{\mathcal{T}_{h}\}_{h>0}$, 
we know $\|B_{K}\| \eqsim h_{K}$ and $|\det B_{K}| \eqsim h_{K}^{2}$. Thus we obtain
$$ |v|_{W^{s}_{p}(K)}\lesssim h_{K}^{2/p-s}
 \sum_{i=\min(s,1)}^{s} h_{K}^{(s-i)}|\hat{v}|_{W^{i}_{p}(\hat{K})} 
 \text{ and } 
 |\hat{v}|_{W^{s}_{p}(\hat{K})}\lesssim h_{K}^{s-2/p}
 \sum_{i=\min(s,1)}^{s} |v|_{W^{i}_{p}(K)}, $$
with hidden constants depend on the shape-regular constant $\sigma$ and $c_{1}, c_{2}, \cdots, c_{k+1}$.

%%%%% trace estimate %%%%
\begin{lemma}[trace estimate, see Lemma 2.4 in \cite{bernardi1989optimal}]
  \label{lm:trace}Let $\{ \mathcal{T}_{h} \}_{h>0}$ be shape-regular. For any $K \in \mathcal{T}_{h}$ we have 
\begin{equation}\label{eq:trace} 
\|v\|^{2}_{L^{2}(\p K)} \leq C_{\rm{Tr}} (h_{K}^{-1}\|v\|^{2}_{L^{2}(K)} + h_{K}|v|_{H^{1}(K)}^{2}) \qquad \forall v \in H^{1}(K), 
\end{equation} 
where $C_{\rm{Tr}}$ dependents on $c_{1}$ and the shape-regular constant $\sigma$.
\end{lemma}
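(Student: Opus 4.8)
The idea is to pull $v$ back to the fixed reference triangle $\hat{K}$ through $F_{K}$, apply the classical trace inequality on $\hat{K}$, and transfer the resulting reference volume norms back to $K$ with the scaling estimates, carefully tracking the powers of $h_{K}$ produced by the affine part $B_{K}$. Concretely, on the fixed Lipschitz domain $\hat{K}$ there is an absolute constant $\hat{C}$ with $\|\hat{w}\|_{L^{2}(\p\hat{K})}^{2}\le \hat{C}\big(\|\hat{w}\|_{L^{2}(\hat{K})}^{2}+|\hat{w}|_{H^{1}(\hat{K})}^{2}\big)$ for every $\hat{w}\in H^{1}(\hat{K})$. Writing $\hat{v}=v\circ F_{K}$ and parametrizing $\p K$ edge by edge through $F_{K}$, the arc-length elements on an edge $F\subset\p K$ and the corresponding edge $\hat{F}\subset\p\hat{K}$ are linked by the factor $|DF_{K}\,\hat{\tau}|$, where $\hat{\tau}$ is the constant unit tangent of $\hat{F}$. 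Since $DF_{K}=B_{K}+D\Phi_{K}$ and, by \eqref{eq:cK}, $|D\Phi_{K}w|=|D\Phi_{K}B_{K}^{-1}B_{K}w|\le c_{K}|B_{K}w|$, the triangle inequality yields $(1-c_{K})|B_{K}\hat{\tau}|\le|DF_{K}\hat{\tau}|\le(1+c_{K})|B_{K}\hat{\tau}|$; as $|B_{K}\hat{\tau}|$ equals, up to the fixed length of $\hat{F}$, an edge length of $\tilde{K}$, shape regularity together with \eqref{eq:diamK-hK} gives $|DF_{K}\hat{\tau}|\eqsim h_{K}$ uniformly in $h$. Hence $\|v\|_{L^{2}(\p K)}^{2}\lesssim h_{K}\|\hat{v}\|_{L^{2}(\p\hat{K})}^{2}\lesssim h_{K}\big(\|\hat{v}\|_{L^{2}(\hat{K})}^{2}+|\hat{v}|_{H^{1}(\hat{K})}^{2}\big)$.

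Next I would bound the two reference norms by quantities on $K$. Applying the scaling estimate \eqref{eq:scaling-b} with $p=2$ and $s\in\{0,1\}$ — cases that only invoke the $C^{1}$ data of $F_{K}$, hence only $c_{1}$ — together with $\|B_{K}\|\eqsim h_{K}$ and $|\det B_{K}|\eqsim h_{K}^{2}$, gives $\|\hat{v}\|_{L^{2}(\hat{K})}^{2}\lesssim h_{K}^{-2}\|v\|_{L^{2}(K)}^{2}$ and $|\hat{v}|_{H^{1}(\hat{K})}^{2}\lesssim|v|_{H^{1}(K)}^{2}$. Substituting these into the previous display produces exactly $\|v\|_{L^{2}(\p K)}^{2}\lesssim h_{K}^{-1}\|v\|_{L^{2}(K)}^{2}+h_{K}|v|_{H^{1}(K)}^{2}$, with $C_{\rm Tr}$ depending only on $\hat{C}$, the shape-regularity constant $\sigma$ and $c_{1}$.

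I expect the only genuinely delicate point to be the curved edge: one has to check that the arc-length parametrization it inherits from $F_{K}$ has Jacobian $|DF_{K}\hat{\tau}|$ uniformly comparable to $h_{K}$ — which is precisely what \eqref{eq:cK} and shape regularity supply — while $\hat{K}$ being a fixed domain keeps the reference trace constant $\hat{C}$ truly universal; once these are in hand the rest is bookkeeping of $B_{K}$-powers. The interior change of variables needed for the scaling step is handled the same way, through $|\det DF_{K}|\eqsim|\det B_{K}|\eqsim h_{K}^{2}$, which again follows from \eqref{eq:cK} and shape regularity.
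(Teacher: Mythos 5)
Your proof is correct and follows the canonical pull-back-to-reference argument, which is precisely what the paper delegates to Lemma 2.4 of Bernardi and Raugel rather than reproving. You handle the two points that matter: the boundary Jacobian $|DF_K\hat\tau|$ is trapped between $(1\pm c_K)|B_K\hat\tau|\eqsim h_K$ by \eqref{eq:cK} together with shape regularity, and only the $s\le 1$ instances of the scaling estimate \eqref{eq:scaling-b} are invoked, so the constant $C_{\rm Tr}$ indeed depends only on $c_1$ and $\sigma$ as claimed.
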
 
%%%%% inverse estimate %%%%
\begin{lemma}[inverse estimate]
  \label{lm:inverse}Let $\{\T_{h}\}_{h>0}$ is regular of order $k$. For any $K \in \T_{h}$, the following inverse estimate holds with $0 \leq m < s \leq k + 1$ 
\begin{equation}\label{eq:inverse} 
|v|_{H^{s}(K)}\leq C_{{\rm{Inv}} }h_{K}^{m-s} \sum_{i=\min(1,m)}^{m} |v|_{H^{i}(K)}, 
\qquad \forall v \in P_{K}, 
\end{equation}   
where $C_{\rm{Inv}}$ depends on the polynomial degree $k$, the shape-regular constant $\sigma$ and $c_{1}, c_{2}, \cdots, c_{k+1}$.
\end{lemma}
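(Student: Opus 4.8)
The plan is to transfer the estimate to the fixed reference triangle $\hat K$, apply the classical polynomial inverse inequality there, and transfer back, using the scaling estimates of Lemma~\ref{lm:scaling} at both ends. First I would fix $v \in P_K$ and set $\hat v = v \circ F_K \in \mathcal{P}_k(\hat K)$; this is legitimate precisely because $0 \le m < s \le k+1$ and $\{\mathcal{T}_h\}_{h>0}$ is regular of order $k$, so Lemma~\ref{lm:scaling} applies in both directions. On $\hat K$, since $\mathcal{P}_k(\hat K)$ is a finite-dimensional space and all norms on it are equivalent, there is a constant $C(k)$ depending only on $k$ (and the fixed geometry of $\hat K$) with
\begin{equation*}
|\hat v|_{H^s(\hat K)} \le C(k)\, \|\hat v\|_{H^{m}(\hat K)} \le C(k) \sum_{i=0}^{m} |\hat v|_{H^i(\hat K)},
\end{equation*}
and when $m \ge 1$ the zeroth-order term can be dropped is not automatic — but note the statement only claims the sum from $\min(1,m)$ to $m$, so for $m\ge 1$ I must instead bound $|\hat v|_{H^s(\hat K)}$ by $\sum_{i=1}^m |\hat v|_{H^i(\hat K)}$, which again holds by norm-equivalence on the quotient of $\mathcal{P}_k(\hat K)$ by constants (or, more simply, by equivalence of $|\cdot|_{H^1(\hat K)}+\dots+|\cdot|_{H^m(\hat K)}$ with $\|\cdot\|_{H^m(\hat K)/\mathbb{R}}$, since $|\hat v|_{H^s}$ with $s\ge 1$ annihilates constants).

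Next I would chain the scalings. Applying \eqref{eq:scaling-a} with exponent $p=2$ to pass from $\hat v$ back to $v$,
\begin{equation*}
|v|_{H^s(K)} \lesssim h_K^{1-s} \sum_{j=\min(s,1)}^{s} h_K^{s-j} |\hat v|_{H^j(\hat K)} \lesssim h_K^{1-s}\Big( |\hat v|_{H^1(\hat K)} + \dots + |\hat v|_{H^s(\hat K)}\Big),
\end{equation*}
where I have used $s\ge 1$ so $\min(s,1)=1$ and each $h_K^{s-j}\lesssim 1$ up to the shape-regularity constant (here one may freely assume $h_K \lesssim 1$, or carry the powers of $h_K$ explicitly; they only help). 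The $H^1,\dots,H^s$ seminorms of $\hat v$ are all controlled by the reference inverse inequality above, giving $|v|_{H^s(K)} \lesssim h_K^{1-s}\sum_{i=\min(1,m)}^{m} |\hat v|_{H^i(\hat K)}$. Finally, applying \eqref{eq:scaling-b} to bound each $|\hat v|_{H^i(\hat K)}$ in terms of $v$,
\begin{equation*}
|\hat v|_{H^i(\hat K)} \lesssim h_K^{i-1} \sum_{j=\min(i,1)}^{i} |v|_{H^j(K)} \lesssim h_K^{i-1} \sum_{j=\min(1,m)}^{m} |v|_{H^j(K)},
\end{equation*}
and substituting, with $h_K^{i-1}\lesssim h_K^{m-1}$ for $i\le m$ (again up to shape-regularity, assuming $h_K\lesssim 1$), yields $|v|_{H^s(K)}\lesssim h_K^{1-s}h_K^{m-1}\sum_{i=\min(1,m)}^m |v|_{H^i(K)} = C_{\mathrm{Inv}} h_K^{m-s}\sum_{i=\min(1,m)}^m |v|_{H^i(K)}$, which is exactly \eqref{eq:inverse}; the constant depends only on $k$, $\sigma$, and $c_1,\dots,c_{k+1}$ through $C_{\mathrm S}$ and the reference-triangle inverse inequality. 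The case $m=0$ is handled identically, keeping $i=0$ in the sums throughout.

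The only genuine subtlety — the "main obstacle," though it is mild — is the bookkeeping with the lower-order sums: the statement's sums start at $\min(1,m)$, not at $0$, so one must be careful that the reference-triangle inverse inequality can indeed be stated with the $L^2$ term omitted when $m\ge1$. This is where one uses that $|\hat v|_{H^s(\hat K)}$ with $s\ge1$ vanishes on constants, so it is bounded by a norm on $\mathcal{P}_k(\hat K)/\mathbb{R}$, which by finite-dimensionality and norm equivalence is comparable to $\sum_{i=1}^{m}|\hat v|_{H^i(\hat K)}$ (note $m\ge1$ in this case, so this sum is nonempty and already separates points on the quotient). A secondary minor point is tracking whether powers of $h_K$ should be written as inequalities or equalities; since the lemma is stated for a generic family and $h\le$ a fixed bound, absorbing $h_K^{s-j}\lesssim1$ etc.\ into the constant is harmless, but one can alternatively keep them to get a slightly sharper statement. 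Neither point requires real work beyond careful accounting.
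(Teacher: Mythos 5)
Your proposal takes the same basic route as the paper (pull back to $\hat K$, use norm equivalence on $\mathcal P_k(\hat K)$, push forward via Lemma~\ref{lm:scaling}), but there is a genuine error in the bookkeeping of the $h_K$ powers that makes the argument fail for $m\ge 2$. You drop the factors $h_K^{s-j}$ from \eqref{eq:scaling-a} early (``they only help''), and then try to recover the lost power at the end by asserting $h_K^{i-1}\lesssim h_K^{m-1}$ for $i\le m$. This inequality goes the wrong way: with $h_K\lesssim 1$ and $i< m$, one has $i-1<m-1$ and hence $h_K^{i-1}\ge h_K^{m-1}$. The dominant term of $\sum_{i=1}^m h_K^{i-1}$ is $i=1$, which contributes $h_K^0=1$, not $h_K^{m-1}$, so your chain actually only yields $|v|_{H^s(K)}\lesssim h_K^{1-s}\sum_{j=1}^m|v|_{H^j(K)}$, which is strictly weaker than the claimed $h_K^{m-s}$ whenever $m\ge 2$.

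The fix, which is what the paper does, is to keep the $h_K$ powers attached to the reference seminorms throughout so they cancel exactly rather than trying to recover them afterward. From \eqref{eq:scaling-a},
\begin{equation*}
|v|_{H^s(K)}\ \lesssim\ h_K^{1-s}\sum_{i=1}^{s} h_K^{s-i}\,|\hat v|_{H^i(\hat K)}\ \lesssim\ h_K^{1-s}\sum_{i=1}^{m} h_K^{m-i}\,|\hat v|_{H^i(\hat K)},
\end{equation*}
where for $i\le m$ one uses $h_K^{s-i}\le h_K^{m-i}$ (since $s\ge m$, $h_K\lesssim 1$), and for $m<i\le s$ one uses norm equivalence on $\mathcal P_k(\hat K)/\mathcal P_{m-1}$ to bound $|\hat v|_{H^i(\hat K)}\lesssim |\hat v|_{H^m(\hat K)}$ together with $h_K^{s-i}\le 1=h_K^{m-m}$. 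Then \eqref{eq:scaling-b} gives $|\hat v|_{H^i(\hat K)}\lesssim h_K^{i-1}\sum_{j=1}^i |v|_{H^j(K)}$, and the factor $h_K^{m-i}\cdot h_K^{i-1}=h_K^{m-1}$ is independent of $i$, yielding $h_K^{1-s}\cdot h_K^{m-1}=h_K^{m-s}$ as required. Your handling of the $m=0$ case and the norm-equivalence discussion on the quotient are correct; the only gap is the premature discard of $h_K^{s-j}$ and the subsequent misdirected inequality.
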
 
\begin{proof}
  If $m\geq 1$, by Lemma \ref{lm:scaling} (scaling on curved triangles) and the norm equivalence on finite dimensional space, we have\
  \begin{equation}\label{eq:inverse-1} 
    \begin{aligned}
      |v|_{H^{s}(K)}&\lesssim h_{K}^{1-s}
     \sum^{s}_{i=1}|\hat{v}|_{H^{i}(\hat{K})}h_{K}^{(s-i)}
      \lesssim h_{K}^{1-s} \sum^{m}_{i=1}|\hat{v}|_{H^{i}(\hat{K})}h_{K}^{(m-i)}.    
    \end{aligned}
  \end{equation} 
  Next for each $1\leq i\leq m$, by Lemma \ref{eq:scaling} (scaling on curved triangles) again, we have $|\hat{v}|_{H^{i}(\hat{K})} \lesssim h_{K}^{i-1} \sum_{j=1}^{i}|v|_{H^{j}(K)}$. Substitute this into \eqref{eq:inverse-1}, we obtained \eqref{eq:inverse}. 
  For the case of $m = 0$, similar arguments lead to 
  $$ \begin{aligned}
    |v|_{H^{s}(K)}&\lesssim 
    h_{K}^{1-s}\sum^{s}_{i=1}|\hat{v}|_{H^{i}(\hat{K})}h_{K}^{(s-i)}
    \lesssim h_{K}^{1-s}\|\hat{v}\|_{L^{2}(\hat{K})}\lesssim h_{K}^{-s}\|v\|_{L^{2}(K)}. 
  \end{aligned} $$
  This completes the proof. 
\end{proof}

%%%%%%%%%%%%%%%%%%%%%%%%%%%%%%%%%%%%%%%%%%%%
%approximation property
%%%%%%%%%%%%%%%%%%%%%%%%%%%%%%%%%%%%%%%%%%%%
As one of the main focuses of our work, the following theorem gives the approximation property with oblique boundary conditions. The proof is a nontrivial, and  is threfore postponed to Appendix \ref{app:approximation} for the reader's convenience. 
\begin{theorem}[approximation property of $V^{\ell}_{h,0}$]
  \label{thm:app_Vh_ell}Assume that $\{ \mathcal{T}_{h} \}_{h>0}$ is regular of order $k$, 
  $u \in H^{s}(\Omega) \cap H^{2}_{\ell,0}(\Omega)$, and the oblique vector field $\ell$ is piecewise $C^{s-1}$ with $2\leq s \leq k+1$. 
  Then there exists a quasi-interpolation $\Pi^{\ell}_{h}: H^{2}_{\ell,0}(\Omega) \rightarrow V^{\ell}_{h,0}$ satisfying  
  \begin{subequations}
  \begin{align}
  |u - \Pi_{h}^{\ell}u|_{H^{m}(K)} \lesssim h_{K}^{s-m}\|u\|_{H^{s}(\omega_{K})}, \qquad m=1,2,  \label{eq:app-Pi-ell-K}  \\
  \|\nabla (u - \Pi_{h}^{\ell}u)\|_{L^{2}(\p K)}\lesssim h_{K}^{s-1-1/2}\|u\|_{H^{s}(\omega_{K})}. \label{eq:app-Pi-ell-F} 
  \end{align} 
  \end{subequations}
  Here, $\omega_{K}$ is the local neighborhood of $K \in \T_{h}$ and the hidden constants depend on $\|\ell\|_{W^{s-1}_{\infty}(\p \omega_{K} \cap \p\Omega)}$ , the polynomial degree $k$, the shape-regular constant $\sigma$ and $c_{1}, c_{2}, \cdots, c_{k+1}$.
\end{theorem}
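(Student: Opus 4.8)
The plan is to obtain $\Pi^{\ell}_{h}u$ as a standard quasi-interpolant into $V_{h}$, corrected by a function supported near $\p\Omega$ that restores the discrete oblique condition, and then normalized to have zero mean.

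\textbf{Auxiliary interpolant.} Following the constructions in \cite{bernardi1989optimal, wu2021c0}, one builds a Scott--Zhang-type operator $I_{h}\colon H^{2}(\Omega)\to V_{h}$ whose degrees of freedom are local averages of $u$ and of the traces of $u$ and $\nabla u$, and which, after pull-back by $F_{K}$, reproduces $\mathcal{P}_{k}(\hat K)$ on every reference triangle. By the scaling Lemma~\ref{lm:scaling}, the inverse Lemma~\ref{lm:inverse}, the trace Lemma~\ref{lm:trace} and a Bramble--Hilbert argument, $I_{h}$ satisfies $|u-I_{h}u|_{H^{m}(K)}\lesssim h_{K}^{s-m}\|u\|_{H^{s}(\omega_{K})}$ for $m=0,1,2$ and $\|\nabla(u-I_{h}u)\|_{L^{2}(\p K)}\lesssim h_{K}^{s-3/2}\|u\|_{H^{s}(\omega_{K})}$, for all $u\in H^{s}(\Omega)$ with $2\le s\le k+1$. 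The one feature we fix in the construction is that, for a boundary vertex $x_{i}\in\mathcal{N}_{h}^{\partial}$, the pair of first-order degrees of freedom at $x_{i}$ is taken to be the value at $x_{i}$ of the $L^{2}(\sigma_{i})$-projection $\pi_{i}$ of the trace $\nabla u|_{\sigma_{i}}$ onto $P_{k-1}(\sigma_{i})$, where $\sigma_{i}\subset\p\Omega$ is a curved boundary edge containing $x_{i}$; this is admissible since $\nabla u|_{\sigma_{i}}\in L^{2}(\sigma_{i})$ for $u\in H^{2}(\Omega)$, and it preserves $\mathcal{P}_{k}$-reproduction because $\nabla\hat p|_{\hat\sigma_{i}}\in P_{k-1}(\hat\sigma_{i})^{2}$ whenever $\hat p\in\mathcal{P}_{k}(\hat K)$. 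In general $I_{h}u\notin V^{\ell}_{h,0}$.

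\textbf{Correction and normalization.} Let $c$ be the constant value of $\ell\cdot\nabla u$ on $\p\Omega$ (which exists because $u\in H^{2}_{\ell,0}(\Omega)$), and for $x_{i}\in\mathcal{N}_{h}^{\partial}$ set $\beta_{i}:=c-\ell(x_{i})\cdot\nabla I_{h}u(x_{i})$. Let $\psi_{i}\in V_{h}$ be the function all of whose degrees of freedom vanish except $\nabla\psi_{i}(x_{i})=\ell(x_{i})$; then $\psi_{i}$ is supported on the elements containing $x_{i}$, and pulling back to $\hat K$ (where the single nontrivial reference derivative datum has size $\lesssim\|B_{K}\|\lesssim h_{K}$), together with Lemmas~\ref{lm:scaling} and \ref{lm:trace}, gives $|\psi_{i}|_{H^{m}(K)}\lesssim h_{K}^{2-m}$ for $m=1,2$ and $\|\nabla\psi_{i}\|_{L^{2}(\p K)}\lesssim h_{K}^{1/2}$. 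Put $E_{h}:=\sum_{x_{i}\in\mathcal{N}_{h}^{\partial}}\beta_{i}\psi_{i}$ and $\Pi^{\ell}_{h}u:=I_{h}u+E_{h}-|\Omega|^{-1}\int_{\Omega}(I_{h}u+E_{h})\dx$. Then $\ell(x_{i})\cdot\nabla\Pi^{\ell}_{h}u(x_{i})=\ell(x_{i})\cdot\nabla I_{h}u(x_{i})+\beta_{i}|\ell(x_{i})|^{2}=c$ for every $x_{i}\in\mathcal{N}_{h}^{\partial}$, and subtracting a constant affects neither the gradient nor the zero-mean condition, so $\Pi^{\ell}_{h}u\in V^{\ell}_{h,0}$. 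Since a constant has vanishing $H^{1}$- and $H^{2}$-seminorms and vanishing boundary gradient trace, \eqref{eq:app-Pi-ell-K} and \eqref{eq:app-Pi-ell-F} follow from the estimates for $I_{h}u$ by the triangle inequality once $E_{h}$ is controlled; and since on a fixed $K$ the sum defining $E_{h}$ has at most two terms, $|E_{h}|_{H^{m}(K)}\lesssim(\max_{i}|\beta_{i}|)\,h_{K}^{2-m}$ and $\|\nabla E_{h}\|_{L^{2}(\p K)}\lesssim(\max_{i}|\beta_{i}|)\,h_{K}^{1/2}$, so everything reduces to showing $|\beta_{i}|\lesssim h_{K}^{s-2}\|u\|_{H^{s}(\omega_{K})}$.

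\textbf{The key estimate, and the main difficulty.} It remains to establish $|\beta_{i}|\lesssim h^{s-2}\|u\|_{H^{s}(\omega_{i})}$ for a neighborhood $\omega_{i}\subset\omega_{K}$ of $x_{i}$; this is where the oblique structure enters and is the crux of the proof. By linearity of $\pi_{i}$, $\ell(x_{i})\cdot\nabla I_{h}u(x_{i})=(\pi_{i}w)(x_{i})$ with $w:=\ell(x_{i})\cdot\nabla u|_{\sigma_{i}}$; since $\ell\cdot\nabla u\equiv c$ on $\sigma_{i}\subset\p\Omega$ we have $w=c+\rho$ on $\sigma_{i}$ with $\rho:=(\ell(x_{i})-\ell(\cdot))\cdot\nabla u|_{\sigma_{i}}$, and because $\pi_{i}$ reproduces constants, $\beta_{i}=-(\pi_{i}\rho)(x_{i})$ — the constant $c$ has cancelled, which is precisely why the boundary-edge projection was built into $I_{h}$. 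For $s=2$, the one-dimensional inverse inequality on $P_{k-1}(\sigma_{i})$, $\|\pi_{i}\|_{L^{2}\to L^{2}}=1$, the Lipschitz bound $\|\ell(x_{i})-\ell(\cdot)\|_{L^{\infty}(\sigma_{i})}\lesssim h$ ($\ell\in C^{1}$), and the trace estimate $\|\nabla u\|_{L^{2}(\sigma_{i})}\lesssim h^{-1/2}\|u\|_{H^{2}(\omega_{i})}$ (Lemma~\ref{lm:trace} applied to $\nabla u\in H^{1}(\omega_{i})$) give $|\beta_{i}|\lesssim h^{-1/2}\|\rho\|_{L^{2}(\sigma_{i})}\lesssim h^{-1/2}\cdot h\cdot h^{-1/2}\|u\|_{H^{2}(\omega_{i})}=\|u\|_{H^{2}(\omega_{i})}$, exactly the required bound. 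For $s\ge3$ one has $u\in W^{1}_{\infty}(\omega_{i})$, hence $c=\ell(x_{i})\cdot\nabla u(x_{i})$ and $|\beta_{i}|=|\ell(x_{i})\cdot(\nabla u-\nabla I_{h}u)(x_{i})|\le\|\nabla(u-I_{h}u)\|_{L^{\infty}(\omega_{i})}\lesssim h^{s-2}\|u\|_{H^{s}(\omega_{i})}$ by the $L^{\infty}$ interpolation estimate for $I_{h}$; the intermediate powers can alternatively be recovered by a careful scaling and Bramble--Hilbert argument for $\rho\in H^{s-3/2}(\sigma_{i})$ using $\ell\in C^{s-1}$ and $\rho(x_{i})=0$. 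The delicate part throughout is keeping these gains in powers of $h$ compatible with the curved boundary edges and with the low trace regularity of $\nabla u$; everything else is routine bookkeeping with the scaling, inverse and trace estimates of \cite{bernardi1989optimal}.
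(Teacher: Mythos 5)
Your overall architecture is the same as the paper's: build an unconstrained quasi-interpolant, modify the derivative degrees of freedom at boundary vertices so that the $\ell$-directional derivative at each vertex equals the constant $c$, and subtract the mean. Everything then reduces to bounding the discrepancy $\beta_i$ between the unconstrained interpolant's $\ell$-directional derivative at a boundary vertex and $c$. The paper (Appendix A) treats this discrepancy by pulling $\ell\cdot\nabla u = \hat\eta\cdot\hat\nabla\hat u$ back to $\hat K$ with $\hat\eta = (DF_{K})^{-1}\hat\ell$, Lagrange-interpolating $\hat\eta$ at $\hat{\bm{a}}$, and splitting into a term controlled by the $Q_{\hat K}$ approximation error and a term controlled by $|\hat\eta|_{W^{s-1}_{\infty}(\hat F)}$, uniformly for $2\le s\le k+1$. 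You instead bake a boundary-edge projection $\pi_i$ into the interpolant, use reproduction of constants to cancel $c$, and then invoke the Lipschitz continuity of $\ell$ (for $s=2$) or an $L^{\infty}$ interpolation bound (for $s\ge 3$). These are genuinely different ways to handle the crux.

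There is, however, a real gap in your construction. Your claim that taking the boundary DOF to be $\pi_i(\nabla u|_{\sigma_i})(x_i)$ preserves $\mathcal{P}_k$-reproduction fails on a genuinely curved edge $\sigma_i$. For $v=\hat v\circ F_{K}^{-1}\in P_{K}$ one has $\nabla v = (DF_{K}^{-T}\hat\nabla\hat v)\circ F_{K}^{-1}$, and on a curved edge neither $DF_{K}^{-T}$ nor $F_{K}^{-1}|_{\sigma_i}$ is affine, so $\nabla v|_{\sigma_i}\notin P_{k-1}(\sigma_i)^2$ and $\pi_i$ does not leave it fixed. The justification you give, $\hat\nabla\hat p|_{\hat\sigma_i}\in P_{k-1}(\hat\sigma_i)^2$, lives entirely in reference coordinates and does not imply the ambient fact you need: the degrees of freedom of $V_h$ are the ambient derivatives $\partial_j v(\bm{a}_i)$, not the reference ones. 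If you instead project $\hat\nabla\hat u$ on $\hat\sigma_i$ so that reproduction holds, the cancellation of $c$ is lost, because $c = \ell\cdot\nabla u = \hat\eta\cdot\hat\nabla\hat u$ now involves the spatially varying pulled-back direction $\hat\eta$, and a projection of $\hat\nabla\hat u$ alone no longer reproduces $c$. Reconciling polynomial reproduction with the constant cancellation is exactly the tension the paper resolves with the Lagrange interpolant $\hat{\mathcal{I}}\hat\eta$, and your sketch does not supply an equivalent device. As a secondary point, the range $2<s<3$ (if non-integer orders are intended) is only gestured at, whereas the paper's $\hat\eta$-interpolation argument covers all $s$ in one stroke.
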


\subsection{The discrete Miranda-Talenti-type identity for oblique boundary conditions}
In this subsection, we establish the Miranda-Talenti estimate for oblique boundary conditions in the discrete level by using the $C^{1}$-continuity on the vertices of the finite element space.

Define the jump of the normal derivative on an interior edge $F=\p K^{+} \cap \p K^{-}$ as 
$$ [\pd{v}{n}{}] := \frac{\p v^{+}}{\p n^{+}} + \frac{\p v^{-}}{\p n^{-}}  \quad \text{ where } \quad v^{\pm} := v|_{K^{\pm}}, $$
where $n^{\pm}$ is the unit outward normal vector of $K^{\pm}$.
For any interior edge $F \in \F^{i}_{h}$, we specify a tangent direction denoted as $t_{F}$. Let $\bo{x}^{(1)}_{F}$ and $\bo{x}^{(2)}_{F}$ be the two vertices of $F$ (along the direction $t_{F}$). 
Recall $\ell = (\ell_{1},\ell_{2})$ is the oblique vector field defined on $\p\Omega$, and we denote 
$$ \frac{\partial v}{\partial \ell} := \ell_{1}\partial_{1} v + \ell_{2}\partial_{2} v
\quad\text{ and }\quad    
  \frac{\partial v}{\partial \ell^{\perp}} := -\ell_{2}\partial_{1} v + \ell_{1}\partial_{2} v $$
as the directional derivatives along $\ell$ and its perpendicular direction $\ell^{\perp} = (-\ell_{2}, \ell_{1})$. Moreover, we denote
  $ \dot{\left(\frac{\partial v}{\partial \ell}\right)}:= \frac{ \mathrm{d}}{\dphi}\left(\frac{\partial v}{\partial \ell}(\bo{x}(\varphi))\right). $
  
\begin{theorem}[discrete Miranda-Talenti-type identity for oblique boundary conditions]
  \label{thm:dis-mt-obl}Let $\Omega$ be a bounded domain with $C^{2}$ boundary. 
  The oblique vector field $\ell$ is of class $C^{1}$. 
  Let $\mathcal{T}_{h}$ be an exact conforming triangulation of $\Omega$. 
  For any $v_{h} \in V_{h}$, it holds that 
  \begin{equation}\label{eq:dis-MT-obl} 
  \begin{aligned}
  \sum_{K\in\T_{h}}\int_{K}(\Delta v_{h})^{2}\dx=
  \sum_{K\in\T_{h}}&\int_{K}|D^{2}v_{h}|^{2}\dx + 
  2\sum_{F\in\F^{i}_{h}}\int_{F}[\frac{\p v_{h}}{\p n}]\frac{\p^{2} v_{h}}{\p t_{F}^{2}}\ds \\ 
  &+\sum_{F\in\F^{\p}_{h}}\int_{F}
  |\nabla v_{h}|^{2}(\dot{\theta} - \chi) - 
  2\frac{\p v_{h}}{\p\ell^{\perp}}\dot{\left(\frac{\p v_{h}}{\p \ell}\right)}  \ds. 
  \end{aligned}  
  \end{equation} 
\end{theorem}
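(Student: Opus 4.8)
The plan is to mimic the proof of Lemma~\ref{lm:mt-det} (the continuous Miranda--Talenti identity) elementwise, and then carefully collect the resulting boundary contributions on the skeleton. Concretely, I would start from the pointwise algebraic identity already recorded in the proof of Lemma~\ref{lm:mt-det},
\begin{equation*}
  (\Delta v_{h})^{2} - |D^{2}v_{h}|^{2}
  = \partial_{1}\bigl(\partial_{1}v_{h}\,\partial_{22}v_{h} - \partial_{2}v_{h}\,\partial_{12}v_{h}\bigr)
  - \partial_{2}\bigl(\partial_{1}v_{h}\,\partial_{12}v_{h} - \partial_{2}v_{h}\,\partial_{11}v_{h}\bigr),
\end{equation*}
which is valid on each $K \in \T_{h}$ since $v_{h}|_{K} \in P_{K}$ is smooth there. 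Applying the divergence theorem on each curved triangle $K$ and summing over $K \in \T_{h}$ turns the left-hand side into $\sum_{K}\int_{K}(\Delta v_{h})^{2} - |D^{2}v_{h}|^{2}\dx$ and the right-hand side into a sum of boundary integrals over $\partial K$, with integrand the normal component of the vector field $(\partial_{1}v_{h}\,\partial_{22}v_{h} - \partial_{2}v_{h}\,\partial_{12}v_{h},\ -\partial_{1}v_{h}\,\partial_{12}v_{h} + \partial_{2}v_{h}\,\partial_{11}v_{h})$.

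Next I would reorganize $\sum_{K}\int_{\partial K}$ into $\sum_{F \in \F^{i}_{h}}\int_{F}$ plus $\sum_{F \in \F^{\partial}_{h}}\int_{F}$. On each interior edge $F = \partial K^{+}\cap\partial K^{-}$ the contributions from the two sides are compared: the $C^{0}$-continuity of $v_{h}$ across $F$ makes the full gradient $\nabla v_{h}$ continuous in the tangential direction $t_{F}$ (hence $\partial v_{h}/\partial t_{F}$ and $\partial^{2}v_{h}/\partial t_{F}^{2}$ are single-valued on $F$), while only the \emph{normal} derivative $\partial v_{h}/\partial n$ may jump. Rewriting the boundary integrand in the local $(t_{F}, n)$ frame — expressing $\partial_{11}, \partial_{12}, \partial_{22}$ and first derivatives in terms of $\partial_{t_{F}}, \partial_{n}$ — the terms built purely from tangential data cancel between $K^{+}$ and $K^{-}$ (opposite normals), and what survives is exactly $2\int_{F}[\partial v_{h}/\partial n]\,\partial^{2}v_{h}/\partial t_{F}^{2}\ds$; this is the discrete defect already familiar from \cite{wu2021c0}. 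Crucially, at the two endpoints $\bo{x}^{(1)}_{F}, \bo{x}^{(2)}_{F}$ of $F$ there are no spurious vertex terms because the $C^{1}$-continuity at all vertices guarantees that the boundary-of-boundary contributions (which would arise from integrating by parts along $F$ to produce the $\partial^{2}v_{h}/\partial t_{F}^{2}$ term) telescope and cancel around each interior vertex; this is where the enhanced vertex regularity of $V_{h}$ enters.

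On boundary edges $F \in \F^{\partial}_{h}$ I would repeat the computation from the proof of Lemma~\ref{lm:mt-det}: rewrite the boundary integrand using the directional derivatives $\partial v_{h}/\partial\ell$ and $\partial v_{h}/\partial\ell^{\perp}$ (here $v_{h}$ restricted to $\partial\Omega$ is genuinely smooth enough along the $C^{2}$ curve, parametrized by arc length $\varphi$, since $v_{h} \in H^{1}(\Omega)$ and $v_{h}|_{K}$ is smooth), and use the Frenet-type relations $\dot{n} = \chi t$, $\dot{t} = -\chi n$ together with $\dot{\ell} = \dot\theta\,\ell^{\perp}$ to produce the curvature term $|\nabla v_{h}|^{2}(\dot\theta - \chi)$ and the cross term $-2\,(\partial v_{h}/\partial\ell^{\perp})\,\dot{(\partial v_{h}/\partial\ell)}$. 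Unlike in Lemma~\ref{lm:mt-det}, I do \emph{not} assume $\ell\cdot\nabla v_{h}$ is constant on $\partial\Omega$, so the term $\dot{(\partial v_{h}/\partial\ell)}$ is retained rather than dropped — this is precisely the stabilization term that the scheme will later penalize. Again the vertex $C^{1}$-continuity is needed so that, when integrating by parts in $\varphi$ along $\partial\Omega$ to arrange the integrand into the stated form, the endpoint contributions at each boundary vertex match between adjacent boundary edges and cancel. Rearranging (moving $-|D^{2}v_{h}|^{2}$ to the right) yields \eqref{eq:dis-MT-obl}.

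The main obstacle is the careful bookkeeping of the edge computation: correctly changing coordinates to the local $(t_{F},n)$ (or $(\ell,\ell^{\perp})$) frame, identifying which combinations are continuous across/along an edge, and verifying that \emph{all} vertex contributions vanish by the $C^{1}$-at-vertices property — in particular checking that the curved edges (where $F_{K}$ is not affine) do not introduce extra terms, which is where the construction of $F_{K}$ in Remark~\ref{rem:construction-F_K} and the regularity of $\{\T_{h}\}$ are used. The algebra is routine in spirit but delicate; the conceptual content is entirely in seeing that $C^{0}$-globally-plus-$C^{1}$-at-vertices is exactly enough to kill every term except the stated jump and boundary terms.
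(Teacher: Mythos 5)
Your proposal is correct and follows essentially the same route as the paper's proof: apply the divergence theorem on each (possibly curved) element, rewrite the resulting flux in the local $(t_{F},n)$ frame on interior edges and in the $(\ell,\ell^{\perp})$ frame on boundary edges (with an integration by parts along each edge to produce the second tangential derivative, respectively the $\dot{\theta}-\chi$ curvature combination together with the retained derivative of $\partial v_{h}/\partial\ell$ along the boundary), and then invoke the $C^{1}$-continuity at vertices to annihilate all endpoint contributions. The only small imprecisions — on an interior edge the endpoint terms from $K^{+}$ and $K^{-}$ cancel pointwise at each shared vertex because $\nabla v_{h}$ is single-valued there and $n^{+}=-n^{-}$, rather than by telescoping around the vertex, and the Frenet signs consistent with the paper's convention $\chi=\ddot{x}_{1}\dot{x}_{2}-\ddot{x}_{2}\dot{x}_{1}$ are $\dot{t}=\chi\,n$ and $\dot{n}=-\chi\,t$, so that $\dot{\ell}=(\dot{\theta}-\chi)\,\ell^{\perp}$ — do not affect your argument.
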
 
\begin{proof}

  %%%%%%%%%%%%%%%%%%%%%%%%%%%%%%%
 \underline{\textit{Step 1:}}
 %%%%%%%%%%%%%%%%%%%%%%%%%%%%%%%
 For a straight triangle $K \in \mathcal{T}_{h}$, the normal vector of $\partial K$ is piecewise constant. 
 Using integration by part, we obtain (see \cite[Equ. (3.7)]{smears2013discontinuous})
 \begin{equation}\label{eq:dis-mt-str-1} 
  \begin{aligned}
    \int_{K}(\Delta v_{h})^{2}\dx
    &=\int_{K}|D^{2}v_{h}|^{2}\dx + \int_{\partial K}
    \frac{\partial v_{h}}{\partial n}\Delta v_{h} - 
    \nabla \frac{\partial v_{h}}{\partial n} \cdot \nabla v_{h} \ds \\ 
    &=\int_{K}|D^{2}v_{h}|^{2}\dx + \int_{\partial K}
    \frac{\partial v_{h}}{\partial n} \frac{\partial^{2} v_{h}}{\partial t^{2}}  - 
    \frac{\partial^{2} v_{h}}{\partial t\partial n}\frac{\partial v_{h}}{\partial t} \ds.
   \end{aligned}
 \end{equation} 
 Here, the common term 
 $\int_{\partial K}\frac{\partial^{2} v_{h}}{\partial n^2}\frac{\partial v_{h}}{\partial n}\ds $ is cancelled  in the last step. Using integration by part on the edge $F \subset \partial K$, we have 
\begin{equation*}
  \begin{aligned}
    \int_{\partial K}
  \frac{\partial^{2} v_{h}}{\partial t\partial n}\frac{\partial v_{h}}{\partial t}\ds 
  &= \sum_{F \subset \partial K}\int_{F}
  \frac{\p^{2} v_{h}}{\p t_{F}\p n} \frac{\p v_{h}}{\p t_{F}}\ds \\ 
  &= \sum_{F \subset \partial K}\left(-\int_{F}
  \frac{\p v_{h}}{\p n}\frac{\p^{2} v_{h}}{\p t_{F}^{2}}\ds + 
  \frac{\p v_{h}}{\p n}\frac{\p v_{h}}{\p t_{F}}\Bigg|^{\bo{x}_{F}^{(2)}}_{\bo{x}_{F}^{(1)}}\right).
  \end{aligned} 
\end{equation*} 
 Hence, \eqref{eq:dis-mt-str-1} can be reformulated as
\begin{equation}\label{eq:dis-mt-str} 
  \int_{K}(\Delta v_{h})^{2}\dx
  =\int_{K}|D^{2}v_{h}|^{2}\dx + 
  \sum_{F \subset \p K}\left(
  2\int_{F}\frac{\p v_{h}}{\p n}\frac{\p^{2} v_{h}}{\p t_{F}^{2}}\ds - 
  \frac{\p v_{h}}{\p n}\frac{\p v_{h}}{\p t_{F}}\Bigg|^{\bo{x}_{F}^{(2)}}_{\bo{x}_{F}^{(1)}}\right).
\end{equation} 
%%%%%%%%%%%%%%%%%%%%%%%%%%%%%%%
\underline{\textit{Step 2:}}
%%%%%%%%%%%%%%%%%%%%%%%%%%%%%%%
A direct calculation shows that 
$$ (\Delta v_{h})^{2}-|D^{2}v_{h}|^{2} = 
\frac{\p}{\p x_{1}}
\left(\p_{1}v_{h}\p_{22}v_{h} - \p_{2}v_{h}\p_{12}v_{h}\right) - 
\frac{\p}{\p x_{2}}
\left(\p_{1}v_{h}\p_{12}v_{h} - \p_{2}v_{h}\p_{11}v_{h}\right). $$
For any curved triangle $K \in \T_{h}$, apply the divergence Theorem 
$$ \begin{aligned}
  & \int_{K} (\Delta v_{h})^{2}-|D^{2}v_{h}|^{2} \dx \\
&= \int_{\p K} 
n_{1}\left(\p_{1}v_{h}\p_{22}v_{h} - \p_{2}v_{h}\p_{12}v_{h}\right) - 
n_{2}\left(\p_{1}v_{h}\p_{12}v_{h} - \p_{2}v_{h}\p_{11}v_{h}\right) \ds \\ 
&= \sum^{3}_{i = 1,F_{i}\subset\p K}\underbrace{\int_{F_{i}} 
n_{1}\left(\p_{1}v_{h}\p_{22}v_{h} - \p_{2}v_{h}\p_{12}v_{h}\right) - 
n_{2}\left(\p_{1}v_{h}\p_{12}v_{h} - \p_{2}v_{h}\p_{11}v_{h}\right) \ds}_{I_{i}}.
\end{aligned}$$
In the last step, we denote $\p K = \cup^{3}_{i=1}F_{i}$ where $F_{1}$ and $F_{2}$ are the two straight edges and $F_{3} \subset \p\Omega $ is the curved edge. On the straight edge (i.e. $i=1,2$), the same calculation as in \textit{Step 1} leads to 
\begin{equation}\label{eq:dis-mt-Ii} 
  \begin{aligned}
    I_{i} &
%    = \int_{F_{i}} 
%    \frac{\p v_{h}}{\p n}\Delta v_{h}  - \nabla \frac{\p v_{h}}{\p n}\nabla v_{h} \ds 
    = 2\int_{F_{i}}\frac{\p v_{h}}{\p n}\frac{\p^{2} v_{h}}{\p t_{F_{i}}^{2}}\ds 
     - \left(\frac{\p v_{h}}{\p n}\frac{\p v_{h}}{\p t_{F_{i}}}\right)
     \Bigg|^{\bo{x}_{F_{i}}^{(2)}}_{\bo{x}_{F_{i}}^{(1)}}, \quad i = 1,2.    
  \end{aligned} 
\end{equation}    
On the curved edge $F_{3}$, we first use the directional derivatives $\frac{\p v_{h}}{\p \ell}$ and $\frac{\p v_{h}}{\p \ell^{\perp}} $ to represent $\p_{1}v_{h}$ and $\p_{2}v_{h}$, i.e., 
 $$ \begin{aligned}
  \p_{1}v_{h} &= \frac{\p v_{h}}{\p \ell}\ell_{1} - \frac{\p v_{h}}{\p \ell^{\perp}}\ell_{2}
  \quad \text{ and } \quad
  \p_{2}v_{h} &= \frac{\p v_{h}}{\p \ell}\ell_{2} + \frac{\p v_{h}}{\p \ell^{\perp}}\ell_{1}.
 \end{aligned} $$
 Then, substituting above formula into $I_{3}$ and parametrizing $F_{3} \subset \p\Omega$ by the arc length $\varphi$ through the $C^{2}$ mapping, $\bo{x}: [L_{1},L_{2}] \rightarrow \mathbb{R}^{2}$, we obtain
 \begin{equation}\label{eq:dis-mt-I3-1} 
  \begin{aligned}
    I_{3} &= \int_{F_{3}} 
    n_{1}\left(\p_{1}v_{h}\p_{22}v_{h} - \p_{2}v_{h}\p_{12}v_{h}\right) - 
    n_{2}\left(\p_{1}v_{h}\p_{12}v_{h} - \p_{2}v_{h}\p_{11}v_{h}\right) \ds\\ 
    &= \int^{L_{2}}_{L_{1}} \frac{\p v_{h}}{\p \ell}
    \left(\ell_{2}\p_{11}v_{h}n_{2}-\ell_{2}\p_{12}v_{h}n_{1}
         -\ell_{1}\p_{21}v_{h}n_{2}+\ell_{1}\p_{22}v_{h}n_{1}\right) \\ 
    &\quad + \frac{\p v_{h}}{\p \ell^{\perp}}
    \left(\ell_{1}\p_{11}v_{h}n_{2}-\ell_{1}\p_{12}v_{h}n_{1}
         +\ell_{2}\p_{21}v_{h}n_{2}-\ell_{2}\p_{22}v_{h}n_{1}\right)  \dphi.
  \end{aligned}
 \end{equation} 
Taking the derivative of $\frac{\p v_{h}}{\p \ell}$ and $\frac{\p v_{h}}{\p \ell^{\perp}}$ with respect to the arc length $\varphi$, and noting that $(n_{1},n_{2})=(\dot{\bo{x}}_{2},-\dot{\bo{x}}_{1})$, we obtain
\begin{equation}\label{eq:dis-mt-ell} 
  \begin{aligned}
    \dot{\left(\frac{\p v_{h}}{\p \ell}\right)} 
    &= \left(
    \ell_{1}\p_{11}v_{h}\dot{\bo{x}}_{1}+\ell_{1}\p_{12}v_{h}\dot{\bo{x}}_{2}+
    \ell_{2}\p_{21}v_{h}\dot{\bo{x}}_{1}+\ell_{2}\p_{22}v_{h}\dot{\bo{x}}_{2}\right) \\
    &\quad + \dot{\ell_{1}}\p_{1}v_{h} + \dot{\ell_{2}}\p_{2}v_{h} \\
    &= \left(
    -\ell_{1}\p_{11}v_{h}n_{2}+\ell_{1}\p_{12}v_{h}n_{1}
    -\ell_{2}\p_{21}v_{h}n_{2}+\ell_{2}\p_{22}v_{h}n_{1}\right) \\
    &\quad + \dot{\ell_{1}}\p_{1}v_{h} + \dot{\ell_{2}}\p_{2}v_{h}, \\
  \end{aligned} 
\end{equation} 
 and similarly
 \begin{equation}\label{eq:dis-mt-ellperp} 
  \begin{aligned}
    \dot{\left(\frac{\p v_{h}}{\p \ell^{\perp}}\right)} 
    &= \left(
      \ell_{2}\p_{11}v_{h}n_{2}-\ell_{2}\p_{12}v_{h}n_{1}
      -\ell_{1}\p_{21}v_{h}n_{2}+\ell_{1}\p_{22}v_{h}n_{1}\right) \\
    &\quad - \dot{\ell_{2}}\p_{1}v_{h} + \dot{\ell_{1}}\p_{2}v_{h}. \\
   \end{aligned}
 \end{equation} 
Substituting \eqref{eq:dis-mt-ell} and \eqref{eq:dis-mt-ellperp} into \eqref{eq:dis-mt-I3-1}, we arrive at
$$ 
\begin{aligned}
  I_{3} &= \int^{L_{2}}_{L_{1}} 
       \left((\frac{\p v_{h}}{\p \ell})^{2}+(\frac{\p v_{h}}{\p \ell^{\perp}})^{2}\right)
      (\ell_{1}\dot{\ell}_{2}-\ell_{2}\dot{\ell}_{1}) +
      \frac{\p v_{h}}{\p \ell}\dot{\left(\frac{\p v_{h}}{\p \ell^{\perp}}\right)}-
      \frac{\p v_{h}}{\p \ell^{\perp}}\dot{\left(\frac{\p v_{h}}{\p \ell}\right)}
      \dphi.
\end{aligned}
$$
Using integration by part, 
$|\nabla v_{h}|^{2}=(\frac{\p v_{h}}{\p \ell})^{2}+(\frac{\p v_{h}}{\p \ell^{\perp}})^{2}$ and 
the identity $(\ell_{1}\dot{\ell}_{2}-\ell_{2}\dot{\ell}_{1})=\dot{\theta}-\chi$ (see \cite[Page. 48]{maugeri2000elliptic}), we have 
\begin{equation}\label{eq:dis-mt-I3} 
  \begin{aligned}
    I_{3} &= \int^{L_{2}}_{L_{1}} 
        |\nabla v_{h}|^{2}(\dot{\theta}-\chi)
        -2\frac{\p v_{h}}{\p \ell^{\perp}}\dot{\left(\frac{\p v_{h}}{\p \ell}\right)}
        \dphi + 
        \left(\frac{\p v_{h}}{\p \ell}\frac{\p v_{h}}{\p \ell^{\perp}}\right)
        \Bigg|^{\bo{x}^{(2)}_{F_{3}}}_{\bo{x}^{(1)}_{F_{3}}}.
  \end{aligned}
\end{equation} 
Finally, combing \eqref{eq:dis-mt-Ii} and \eqref{eq:dis-mt-I3}, we have, on the boundary curved element $K$, 
\begin{equation}\label{eq:dis-mt-cur} 
  \begin{aligned}
    \int_{K}(\Delta v_{h})^{2}\dx
  &=\int_{K}|D^{2}v_{h}|^{2}\dx + 
  \sum_{i=1}^{2}\left(
    2\int_{F_{i}}\frac{\p v_{h}}{\p n}\frac{\p^{2} v_{h}}{\p t_{F_{i}}^{2}}\ds 
    - \left(\frac{\p v_{h}}{\p n}\frac{\p v_{h}}{\p t_{F_{i}}}\right)
    \Bigg|^{\bo{x}_{F_{i}}^{(2)}}_{\bo{x}_{F_{i}}^{(1)}}  
  \right)  \\
  &\quad +\int_{F_{3}} 
  |\nabla v_{h}|^{2}(\dot{\theta}-\chi)
  -2\frac{\p v_{h}}{\p \ell^{\perp}}\dot{\left(\frac{\p v_{h}}{\p \ell}\right)}
  \ds + 
  \left(\frac{\p v_{h}}{\p \ell}\frac{\p v_{h}}{\p \ell^{\perp}}\right)
  \Bigg|^{\bo{x}^{(2)}_{F_{3}}}_{\bo{x}^{(1)}_{F_{3}}}.
  \end{aligned}
\end{equation} 
%%%%%%%%%%%%%%%%%%%%%%%%%%%%%%%
\underline{\textit{Step 3:}}
%%%%%%%%%%%%%%%%%%%%%%%%%%%%%%%
Using \eqref{eq:dis-mt-str} and \eqref{eq:dis-mt-cur}, we now sum over all the triangles $K \in \mathcal{T}_{h}$ to obtain
$$ \begin{aligned}
\sum_{K\in\T_{h}}\int_{K}&(\Delta v_{h})^{2}\dx=\sum_{K\in\T_{h}}\int_{K}|D^{2}v_{h}|^{2}\dx\\ 
+&2\sum_{F\in\F^{i}_{h}}\int_{F}[\frac{\p v_{h}}{\p n}]\frac{\p^{2} v_{h}}{\p t_{F}^{2}}\ds 
-\underbrace{\sum_{F\in\F^{i}_{h}}\left(
  \frac{\p v_{h}^{+}}{\p n^{+}}\frac{\p v_{h}^{+}}{\p t_{F}}+
  \frac{\p v_{h}^{-}}{\p n^{-}}\frac{\p v_{h}^{-}}{\p t_{F}}\right)
  \Bigg|^{\bo{x}^{(2)}_{F}}_{\bo{x}^{(1)}_{F}}}_{J_{1}}  \\ 
+&\sum_{F\in\F^{\p}_{h}}
\int_{F}|\nabla v_{h}|^{2}(\dot{\theta} - \chi) - 
2\frac{\p v_{h}}{\p\ell^{\perp}}\dot{\left(\frac{\p v_{h}}{\p \ell}\right)}  \ds+\underbrace{\sum_{F\in\F^{\p}_{h}}\left(
  \frac{\p v_{h}}{\p \ell}\frac{\p v_{h}}{\p \ell^{\perp}}\right)
  \Bigg|^{\bo{x}^{(2)}_{F}}_{\bo{x}^{(1)}_{F}}}_{J_{2}}.
\end{aligned} $$
For any interior edge $F=K^{+}\cap K^{-}$, the $C^{1}$-continuity on the vertices implies that 
$$ \frac{\p v_{h}^{+}}{\p t_{F}}(\bo{x})=\frac{\p v_{h}^{-}}{\p t_{F}}(\bo{x}), \qquad 
   \frac{\p v_{h}^{+}}{\p n^{+}}(\bo{x})=-\frac{\p v_{h}^{-}}{\p n^{-}}(\bo{x}),$$
here $\bo{x} \in \N_{h}$ is the vertex of $F$. This shows that $J_{1}=0.$
Let $\bo{x}=F^{+}\cap F^{-}$ be a boundary vertex with $F^{+}, F^{-}\in\F^{\p}_{h}$, we have 
$$ J_{2} = \sum_{\bo{x}\in \N^{\p}_{h}}
\left((\frac{\p v_{h}}{\p \ell}\frac{\p v_{h}}{\p \ell^{\perp}})\Big|_{F^{+}} - 
      (\frac{\p v_{h}}{\p \ell}\frac{\p v_{h}}{\p \ell^{\perp}})\Big|_{F^{-}}
\right)(\bo{x}).
$$
Similarly, the $C^{1}$-continuity on the vertices leads to $J_{2} = 0$, which concludes the proof. 
\end{proof}

\section{Numerical scheme and analysis}\label{sec:analysis}
In this section, we will give the numerical scheme for approximating \eqref{eq:obl-der-problem}. We first define a broken norm as
$$ \|v\|_{h}^{2}:=\sum_{K\in\T_{h}}|v|^{2}_{H^{2}(K)} + 
   \chi_{0}\sum_{F\in\F^{\p}_{h}}\int_{F}|\nabla v|^{2}\ds, $$
where we recall $\chi_{0} = \displaystyle\min_{\p\Omega}(\dot{\theta} - \chi)$. For any $v \in V_{h,0}^{\ell}$, $\|v\|_{h} = 0$ implies that $D^{2}v |_{K} \equiv 0 $ on each triangle $K\in\T_{h}$. Due to $C^{1}$-continuity on the vertices, we immediately know that $v$ is a linear polynomial on $\Omega$. Since 
$ \sum_{F\in\F^{\p}_{h}}\int_{F}|\nabla v|^{2}\ds = 0 $ and $
\int_{\Omega}v\dx=0 $,
we conclude that $v \equiv 0 $.
Hence, $\|\cdot\|_{h}$ is a norm on $V_{h,0}^{\ell}$.

In light of \eqref{eq:bilinear-oblique}, we define the bilinear form $b_{h}:V_{h}\times V_{h}\rightarrow\mathbb{R}$ as 
\begin{equation}\label{eq:bilinear-dis} 
    b_{h}(w_{h},v_{h}) :=
    \sum_{K\in\T_{h}}(\gamma A:D^{2}w_{h},\Delta v_{h})_{K} + 
    \frac{2 - \sqrt{1-\varepsilon }}{2} s_{h}(w_{h},v_{h}),
\end{equation}  
where the stabilized term $s_{h}: V_{h} \times V_{h} \rightarrow \mathbb{R}$ is defined by 
$$ s_{h}(w_{h}, v_{h}):= 
-2\sum_{F\in\F^{i}_{h}}\int_{F}
[\frac{\p w_{h}}{\p n}]\frac{\p^{2} v_{h}}{\p t_{F}^{2}}\ds
+2\sum_{F\in\F^{\p}_{h}}\int_{F}
\dot{\left(\frac{\p w_{h}}{\p \ell}\right)}\frac{\p v_{h}}{\p\ell^{\perp}}\ds,$$
which is naturally induced by the jump and boundary terms in the discrete Miranda-Talenti-type identity \eqref{eq:dis-MT-obl}. 
We are now ready to define the numerical scheme as follows: Find $u_{h} \in V_{h,0}^{\ell}$ such that 
\begin{equation}\label{eq:num-scheme} 
    b_{h}(u_{h},v_{h}) = l_{h}(v_{h})
   \qquad \forall v_{h} \in V_{h,0}^{\ell}, 
\end{equation} 
where the linear form $l_{h}: V_{h} \rightarrow \mathbb{R}$ is defined by $l_{h}(v_{h}) := \sum_{K\in\mathcal{T}_{h}}(\gamma f, \Delta v_{h})_{K}$.

\begin{lemma}[consistence]
    \label{lm:consistence}Let $u \in H^{2}_{\ell,0}(\Omega)$ be the $H^{2}$ strong solution of \eqref{eq:obl-der-problem}, then $u$ satisfies the numerical scheme \eqref{eq:num-scheme}, i.e., $b_{h}(u,v_{h}) = l_{h}(v_{h})$ for any $v_{h}\in V_{h,0}^{\ell}$.
\end{lemma}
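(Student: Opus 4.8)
The strategy is to check the two pieces of $b_h(u,\cdot)$ separately: the consistency of the principal (Frobenius) part, which is immediate from the strong form $A:D^{2}u=f$, and the vanishing of the stabilization term $s_h(u,\cdot)$, which follows from the $H^{2}$-regularity of $u$ together with the oblique condition built into $u\in H^{2}_{\ell,0}(\Omega)$.

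\emph{Step 1 (principal part).} Since $u$ is the $H^{2}$ strong solution, $A:D^{2}u=f$ a.e.\ in $\Omega$; multiplying by the bounded, strictly positive function $\gamma$ gives $\gamma A:D^{2}u=\gamma f$ a.e.\ in $\Omega$. Because $u\in H^{2}(\Omega)$ we have $D^{2}u|_{K}\in L^{2}(K)$ for every $K\in\T_{h}$, so
$$
\sum_{K\in\T_{h}}(\gamma A:D^{2}u,\Delta v_{h})_{K}
=\sum_{K\in\T_{h}}(\gamma f,\Delta v_{h})_{K}=l_{h}(v_{h}).
$$

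\emph{Step 2 (stabilization term vanishes).} I claim $s_{h}(u,v_{h})=0$ for every $v_{h}\in V_{h}$, and in particular for $v_{h}\in V^{\ell}_{h,0}$. For the interior-edge sum: since $u\in H^{2}(\Omega)$, the gradient $\nabla u$ lies in $H^{1}(\Omega;\mathbb{R}^{2})$ and hence has a single-valued trace on each interior edge $F=\p K^{+}\cap\p K^{-}$; using $n^{+}=-n^{-}$ gives $[\p u/\p n]|_{F}=\nabla u\cdot n^{+}+\nabla u\cdot n^{-}=0$. For the boundary-edge sum: by definition of $H^{2}_{\ell,0}(\Omega)$, $\p u/\p\ell=\ell\cdot\nabla u$ is constant on $\p\Omega$, so its arc-length derivative $\dot{\left(\p u/\p\ell\right)}$ vanishes on every $F\in\F^{\p}_{h}$. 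Thus both sums defining $s_{h}(u,v_{h})$ are zero, and combining with Step 1,
$$
b_{h}(u,v_{h})=l_{h}(v_{h})+\tfrac{2-\sqrt{1-\varepsilon}}{2}\,s_{h}(u,v_{h})=l_{h}(v_{h}),
$$
which is the assertion.

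\emph{Main point requiring care.} There is no serious obstacle here; the only subtlety is that $u\notin V_{h}$ in general, so one must first make sure each term of $b_{h}(u,v_{h})$ is meaningful when the first argument is merely in $H^{2}_{\ell,0}(\Omega)$: the Frobenius term because $D^{2}u\in L^{2}(K)$; the interior-edge jump because $\nabla u$ has an $L^{2}(F)$ trace by the $H^{1}$ trace theory (or Lemma~\ref{lm:trace} applied to $\nabla u$); and the boundary term because $\ell$ is $C^{1}$ and $\ell\cdot\nabla u$ is constant (hence differentiable in the arc-length variable $\varphi$) on $\p\Omega$. All of this is provided by the $H^{2}$ strong-solution framework of Section~\ref{sec:pre} together with the smoothness of $\p\Omega$ and $\ell$.
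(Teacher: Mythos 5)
Your proof is correct and follows essentially the same route as the paper's: you show $s_h(u,v_h)=0$ by observing that $[\partial u/\partial n]$ vanishes on interior edges (single-valued $H^1$ trace of $\nabla u$) and $\dot{(\partial u/\partial\ell)}$ vanishes on boundary edges (since $\ell\cdot\nabla u$ is constant on $\partial\Omega$), then use $\gamma A:D^2u=\gamma f$ for the principal term. Your added remarks on well-definedness of $b_h(u,\cdot)$ for $u\in H^2_{\ell,0}(\Omega)$ are a small extra layer of care that the paper leaves implicit.
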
 
\begin{proof}
    $u \in H^{2}_{\ell,0}(\Omega)$ leads to 
    $$ [\frac{\p u}{\p n}]\Big|_{F} \equiv 0,\quad \forall F\in\F^{i}_{h} 
    \quad \text{ and } \quad 
    \dot{\left(\frac{\p u}{\p \ell}\right)}\Big|_{F} \equiv 0,\quad \forall F\in\F^{\p}_{h}. $$
    Thus we know $s_{h}(u,v_{h})=0$. Since $u$ is the $H^{2}$ strong solution of \eqref{eq:obl-der-problem}, we have  
     $$b_{h}(u,v_{h}) = \sum_{K\in\T_{h}}(\gamma A:D^{2}u,\Delta v_{h})_{K} 
      = \sum_{K\in\T_{h}}(\gamma f,\Delta v_{h})_{K} =l_{h}(v_{h}).$$
    That conclude the proof.  
\end{proof}
The following coercivity result follows directly from Theorem \ref{thm:dis-mt-obl} (discrete Miranda-Talenti-type identity for oblique boundary conditions).   
\begin{lemma}[coercivity]
    \label{lm:coer}Under the Assumption \ref{ass:problem}, let $\mathcal{T}_{h}$ be a exact conforming triangulation of $\Omega$, then 
\begin{equation}\label{eq:coer} 
b_{h}(v_{h},v_{h}) \geq (1-\sqrt{1-\varepsilon})\|v_{h}\|^{2}_{h} \qquad v_{h}\in V_{h}.
\end{equation}
Here, recall $\varepsilon$ is the parameter in the Cordes condition \eqref{def:cordes}.  
\end{lemma}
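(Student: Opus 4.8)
The plan is to combine the discrete Miranda--Talenti-type identity of Theorem~\ref{thm:dis-mt-obl} with the property of the Cordes condition (Lemma~\ref{lm:cordes}) in exactly the same way the $H^2$-level coercivity estimate \eqref{eq:coer-b} was derived, but now tracking the jump and boundary terms carefully. First I would write, for $v_h \in V_h$,
\begin{equation*}
b_h(v_h,v_h) = \sum_{K\in\T_h}(\Delta v_h,\Delta v_h)_K + \sum_{K\in\T_h}\big((\gamma A - I):D^2 v_h,\Delta v_h\big)_K + \frac{2-\sqrt{1-\varepsilon}}{2}\,s_h(v_h,v_h).
\end{equation*}
For the first term I would substitute the identity \eqref{eq:dis-MT-obl}, which expresses $\sum_K \int_K (\Delta v_h)^2$ as $\sum_K \int_K |D^2 v_h|^2$ plus $2\sum_{F\in\F^i_h}\int_F [\p_n v_h]\,\p^2_{t_F}v_h + \sum_{F\in\F^\p_h}\int_F |\nabla v_h|^2(\dot\theta-\chi) - 2\,\p_{\ell^\perp}v_h\,\dot{(\p_\ell v_h)}$. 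The key observation is that the last batch of terms is precisely $-s_h(v_h,v_h) + \sum_{F\in\F^\p_h}\int_F |\nabla v_h|^2(\dot\theta-\chi)\,\ds$; indeed $s_h(v_h,v_h) = -2\sum_{F\in\F^i_h}\int_F [\p_n v_h]\,\p^2_{t_F}v_h + 2\sum_{F\in\F^\p_h}\int_F \dot{(\p_\ell v_h)}\,\p_{\ell^\perp}v_h$.

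Carrying out the substitution, the $-s_h(v_h,v_h)$ coming from the identity combines with the $\frac{2-\sqrt{1-\varepsilon}}{2}s_h(v_h,v_h)$ in $b_h$ to leave a coefficient of $\frac{2-\sqrt{1-\varepsilon}}{2} - 1 = -\frac{\sqrt{1-\varepsilon}}{2}$ on $s_h(v_h,v_h)$; but a cleaner route is to note that after substitution $b_h(v_h,v_h)$ equals
\begin{equation*}
\sum_{K\in\T_h}\int_K |D^2 v_h|^2\,\dx + \sum_{F\in\F^\p_h}\int_F |\nabla v_h|^2(\dot\theta-\chi)\,\ds + \sum_{K\in\T_h}\big((\gamma A - I):D^2 v_h,\Delta v_h\big)_K + \frac{-\sqrt{1-\varepsilon}}{2}\,s_h(v_h,v_h).
\end{equation*}
Wait --- I should instead keep the full $s_h$ term and bound it by reusing the identity once more. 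The transparent bookkeeping is: write $\sum_K(\Delta v_h)^2 = \sum_K|D^2v_h|^2 + 2\sum_{\F^i_h}\int_F[\p_n v_h]\p^2_{t_F}v_h + \sum_{\F^\p_h}\int_F|\nabla v_h|^2(\dot\theta-\chi) - s_h^{(2)}$ where $s_h^{(2)}$ is the boundary piece, so that $\frac{2-\sqrt{1-\varepsilon}}{2}s_h(v_h,v_h)$ is chosen exactly so that $\sum_K(\Delta v_h)^2 + \frac{2-\sqrt{1-\varepsilon}}{2}s_h(v_h,v_h)$ reorganizes into a positive-coefficient combination. The main computational obstacle --- really the only subtle point --- is verifying that the sign and the factor $\tfrac{2-\sqrt{1-\varepsilon}}{2}$ make the stabilization term's contribution reinforce rather than spoil positivity; concretely one shows $b_h(v_h,v_h) \ge \sum_K\int_K|D^2v_h|^2 + \chi_0\sum_{\F^\p_h}\int_F|\nabla v_h|^2 - \sqrt{1-\varepsilon}\sum_K\|D^2v_h\|_{L^2(K)}\|\Delta v_h\|_{L^2(K)} - (\text{leftover }s_h\text{ term})$, and the leftover $s_h$ term is handled by invoking \eqref{eq:dis-MT-obl} a second time to trade it back against $\|\Delta v_h\|^2 - \|D^2v_h\|^2$ on each element.

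After this reorganization, on each $K$ I would apply Cauchy--Schwarz and Lemma~\ref{lm:cordes} --- which gives $\|\gamma A:D^2 v_h - \Delta v_h\|_{L^2(K)} \le \sqrt{1-\varepsilon}\,\|D^2 v_h\|_{L^2(K)}$ --- exactly as in \eqref{eq:coer-b}, yielding $\sum_K\int_K|D^2v_h|^2 - \sqrt{1-\varepsilon}\sum_K\|D^2v_h\|_{L^2(K)}\|\Delta v_h\|_{L^2(K)} \ge (1-\sqrt{1-\varepsilon})\sum_K|v_h|^2_{H^2(K)}$, using $\|\Delta v_h\|_{L^2(K)} \le \|D^2 v_h\|_{L^2(K)}$ pointwise (trace of Hessian bounded by Frobenius norm up to the right constant, in fact $(\Delta v)^2 \le 2|D^2v|^2$ in 2D --- but one only needs $\|\Delta v_h\|_{L^2(K)}\le \sqrt{2}\,|v_h|_{H^2(K)}$ and actually the sharper elementary bound suffices). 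Finally, for the boundary sum I would use $\dot\theta - \chi \ge \chi_0$ on $\p\Omega$ from Assumption~\ref{ass:problem} to get $\sum_{\F^\p_h}\int_F|\nabla v_h|^2(\dot\theta-\chi)\,\ds \ge \chi_0\sum_{\F^\p_h}\int_F|\nabla v_h|^2\,\ds$, and since $1-\sqrt{1-\varepsilon}\le 1$, combining the two pieces gives $b_h(v_h,v_h)\ge (1-\sqrt{1-\varepsilon})\big(\sum_K|v_h|^2_{H^2(K)} + \chi_0\sum_{\F^\p_h}\int_F|\nabla v_h|^2\,\ds\big) = (1-\sqrt{1-\varepsilon})\|v_h\|_h^2$. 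The one place demanding care in the write-up is the algebraic accounting of the $s_h$ term and the constant $\tfrac{2-\sqrt{1-\varepsilon}}{2}$; everything else is a direct transcription of the continuous argument \eqref{eq:coer-b} element by element.
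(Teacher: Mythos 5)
Your overall plan is sound and, if carried out cleanly, lands on exactly the same algebra as the paper — but the final step as you have written it rests on a false inequality, and this is a genuine gap.

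The problem is the claim that $\|\Delta v_h\|_{L^2(K)} \le \|D^2 v_h\|_{L^2(K)}$ holds elementwise (or even pointwise). This is not true in two dimensions: $(\Delta v)^2 - |D^2 v|^2 = 2\left(\p_{11}v\,\p_{22}v - (\p_{12}v)^2\right) = 2\det D^2 v$, which changes sign, so there is no constant-one bound in either direction. The sharp pointwise bound is $(\Delta v)^2 \le 2|D^2 v|^2$, and using that would degrade the constant to $1-\sqrt{2}\sqrt{1-\varepsilon}$, which is weaker than the statement and can even be negative. Moreover, a per-element estimate of this flavour cannot be rescued by the discrete Miranda--Talenti identity, because that identity holds \emph{globally} — the jump contributions cancel only after summing over all $K$, so ``trading back $\|\Delta v_h\|^2 - \|D^2 v_h\|^2$ on each element'' is not available. (Note also that even the continuous Miranda--Talenti estimate, Lemma \ref{lm:mt-obl}, goes in the \emph{opposite} direction: $\|D^2 v\|_{L^2(\Omega)} \le \|\Delta v\|_{L^2(\Omega)}$.)

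The paper avoids this by never comparing $\|\Delta v_h\|_{L^2(K)}$ and $\|D^2 v_h\|_{L^2(K)}$ directly. Starting from
\begin{equation*}
b_h(v_h,v_h) \ge \sum_K \|\Delta v_h\|_{L^2(K)}^2 - \sqrt{1-\varepsilon}\sum_K \|D^2 v_h\|_{L^2(K)}\|\Delta v_h\|_{L^2(K)} + \tfrac{2-\sqrt{1-\varepsilon}}{2}\, s_h(v_h,v_h),
\end{equation*}
one applies Young's inequality $ab \le \tfrac12 a^2 + \tfrac12 b^2$ to the middle term, so that the $\|\Delta v_h\|_{L^2(K)}^2$ terms recombine with coefficient $\tfrac{2-\sqrt{1-\varepsilon}}{2}$:
\begin{equation*}
b_h(v_h,v_h) \ge \tfrac{2-\sqrt{1-\varepsilon}}{2}\Bigl(\sum_K \|\Delta v_h\|^2_{L^2(K)} + s_h(v_h,v_h)\Bigr) - \tfrac{\sqrt{1-\varepsilon}}{2}\sum_K\|D^2 v_h\|^2_{L^2(K)}.
\end{equation*}
Only \emph{then} is the discrete Miranda--Talenti identity \eqref{eq:dis-MT-obl} invoked — once, globally — to rewrite the parenthesis as $\sum_K \|D^2 v_h\|^2_{L^2(K)} + \sum_{F\in\F^\p_h}\int_F |\nabla v_h|^2(\dot\theta-\chi)\,\ds$, after which $\dot\theta - \chi \ge \chi_0$ and the elementary inequality $\tfrac{2-\sqrt{1-\varepsilon}}{2} \ge 1-\sqrt{1-\varepsilon}$ finish the proof. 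Your order (identity first, then bound the leftover $s_h$) can be made to work — if you carry out the second substitution explicitly you recover a $\tfrac{\sqrt{1-\varepsilon}}{2}\sum_K(\Delta v_h)^2$ term that completes the square against the Cordes remainder — but you must actually do that algebra; the shortcut via the elementwise $\Delta$-versus-$D^2$ comparison does not exist.
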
 
\begin{proof}
 Combining Lemma \ref{lm:cordes} (property of Cordes condition) and the Cauchy-Schwarz inequality, we obtain  
 $$ \begin{aligned}
 b_{h}(v_{h},v_{h})
 &=\sum_{K\in\T_{h}}\|\Delta v_{h}\|_{L^{2}(K)}^{2} +
 \sum_{K\in\T_{h}}((\gamma A - I):D^{2}v_{h},\Delta v_{h})_{K}\\
 &\quad + \frac{2-\sqrt{1-\varepsilon}}{2}s_{h}(v_{h},v_{h}) \\
 &\geq\sum_{K\in\T_{h}}\|\Delta v_{h}\|_{L^{2}(K)}^{2} -\sqrt{1-\varepsilon}\sum_{K\in\T_{h}}
 \|D^{2}v_{h}\|_{L^{2}(K)}\|\Delta v_{h}\|_{L^{2}(K)}\\ 
 &\quad + \frac{2-\sqrt{1-\varepsilon}}{2}s_{h}(v_{h},v_{h})
 ~~~~~~~~~~~~~~~~~~~~(\text{by Lemma \ref{lm:cordes}})\\ 
 &\geq\frac{2-\sqrt{1-\varepsilon}}{2}
 \left(\sum_{K\in\T_{h}}\|\Delta v_{h}\|_{L^{2}(K)}^{2} + s_{h}(v_{h},v_{h})\right)\\ 
 &\quad -\frac{\sqrt{1-\varepsilon}}{2}\sum_{K\in\T_{h}}\|D^{2}v_{h}\|^{2}_{L^{2}(K)}.
 ~~~~~~~~~~~~(\text{by Cauchy-Schwarz})\\
 \end{aligned} $$
 Applying \eqref{eq:dis-MT-obl} in Theorem \ref{thm:dis-mt-obl} (discrete Miranda-Talenti-type identity for oblique boundary conditions), we have 
 $$ \begin{aligned}
    b_{h}(v_{h},v_{h}) 
    &\geq (1-\sqrt{1-\varepsilon})\sum_{K\in\T_{h}}\|D^{2}v_{h}\|_{L^{2}(K)}^{2} \\ 
    &+ \frac{2-\sqrt{1-\varepsilon}}{2}\chi_{0}
    \sum_{F\in\F^{\p}_{h}}\int_{F}|\nabla v|^{2}\ds \geq (1-\sqrt{1-\varepsilon})\|v_{h}\|^{2}_{h}.
 \end{aligned}$$
 This concludes the proof.
\end{proof}

We note that the coercivity constant (namely, $1 - \sqrt{1 - \varepsilon}$) under the broken norm $\|\cdot\|_{h}$ is exactly the same as that for the PDE theory. 
\begin{remark}[approximation of $\varepsilon$ in Cordes condition]
\label{rmk:app-eps}
We note that the bilinear form \eqref{eq:bilinear-dis} explicitly uses the constant $\varepsilon$ in the Cordes condition \eqref{eq:cordes}. 
As discussed in \cite{wu2021c0}, if the optimal value of $\varepsilon$ is not easy to compute, a simple modification of \eqref{eq:bilinear-dis} reads 
$$ \tilde{b}_{h}(w_{h},v_{h}) :=
\sum_{K\in\T_{h}}(\gamma A:D^{2}w_{h},\Delta v_{h})_{K} + 
\frac{2 - \sqrt{1-\tilde{\varepsilon}}}{2} s_{h}(w_{h},v_{h}), $$
where $\tilde{\varepsilon}$ is an approximation of $\varepsilon$ that satisfies 
$ \sqrt{1 - \tilde{\varepsilon}} + \frac{1-\varepsilon}{\sqrt{1-\tilde{\varepsilon}}} < 2.$
As a result, the coercivity constant becomes $1 - \frac{\sqrt{1 - \tilde{\varepsilon}}}{2} - \frac{1-\varepsilon}{2\sqrt{1-\tilde{\varepsilon}}}$. Even if there is no a priori estimate of $\varepsilon$, one may simply take $\tilde{\varepsilon} = 0$, which leads to the coercivity constant $\frac{\varepsilon}{2}$.
We refer to \cite[Section 4.4]{wu2021c0} for more details.
\end{remark}

Next, we introduce a Poincar\'e-type estimate which is helpful in the analysis. The proof is given in Appendix \ref{app:poincare} for the reader's convenience. 
\begin{lemma}[Poincar\'e-type estimate]
    \label{lm:poin}Let $K\in\T_{h}$ be a curved triangle with a curved edge $F$.  Let $u \in H^{1}(K)$, we have 
    $$ 
    \|u\|^{2}_{L^{2}(K)} \leq 2(1+c_{K})^{2} 
    \left(h_{K}\|u\|_{L^{2}(F)}^{2} + h^{2}_{K}|u|^{2}_{H^{1}(K)}\right),
    $$
where $c_{K}$ is defined in \eqref{eq:cK}.    
\end{lemma}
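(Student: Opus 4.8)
The plan is to prove the Poincar\'e-type estimate by pulling everything back to the reference triangle $\hat{K}$ via the mapping $F_K$, where the corresponding inequality is a standard consequence of the trace theorem on a fixed domain, and then tracking how the constants scale. First I would write $\hat{u} = u \circ F_K \in H^1(\hat{K})$ and recall the elementary one-dimensional-style estimate on the fixed reference triangle: for the straight reference triangle $\hat{K}$ with a distinguished edge $\hat{F}$, there is an absolute constant (which one can take to be $2$ by integrating along line segments emanating perpendicularly or transversally from $\hat{F}$, i.e. $\hat{u}(\hat{x})^2 \le 2\hat{u}(\hat{x}_{\hat F})^2 + 2(\operatorname{diam}\hat K)\int|\nabla\hat u|^2$ along the segment, then integrating over $\hat K$) giving $\|\hat u\|_{L^2(\hat K)}^2 \le 2\|\hat u\|_{L^2(\hat F)}^2 + 2|\hat u|_{H^1(\hat K)}^2$ up to the fixed geometry of $\hat K$.

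Then I would transfer this back to $K$. The key point is that the claimed constant $2(1+c_K)^2$ is sharp enough that one cannot afford to use the generic scaling Lemma~\ref{lm:scaling} (which only gives continuous dependence on the $c_i$); instead one should use the explicit change-of-variables bounds. Writing $F_K = \tilde F_K + \Phi_K$ with $DF_K = B_K + D\Phi_K = B_K(I + B_K^{-1}D\Phi_K)$ and using $\|B_K^{-1}D\Phi_K\| \le c_K < 1$, one controls the Jacobian $|\det DF_K|$ from above and below, and controls $\|DF_K^{-1}\|$, in terms of $B_K$ and $c_K$; the constant $(1+c_K)$ enters through $\|I + B_K^{-1}D\Phi_K\| \le 1 + c_K$. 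The surface-measure factor on the curved edge $F$ versus the straight edge $\hat F$ contributes another power of a $(1+c_K)$-type factor, and the product of these, together with the geometric constants of $\hat K$ absorbed into $h_K = \operatorname{diam}(\tilde K)$ via $\|B_K\|\eqsim h_K$, is what produces $2(1+c_K)^2\bigl(h_K\|u\|_{L^2(F)}^2 + h_K^2|u|_{H^1(K)}^2\bigl)$. I would be careful that $h_K$ is defined as $\operatorname{diam}(\tilde K)$, so the reference-triangle diameter and shape constants are hidden in the relation between $B_K$ and $h_K$; since the statement only claims $\le$, any such benign constants can be pushed into the shape-regularity constant, but to match the stated form exactly one normalizes so that the diameter of $\hat K$ and the relevant trace constant on $\hat K$ equal $1$.

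The main obstacle I expect is bookkeeping the constants so that only the clean factor $2(1+c_K)^2$ survives and everything else is genuinely absorbed into $h_K$ and $h_K^2$ — in particular making the base inequality on $\hat K$ have constant exactly $2$, and verifying that the two perturbation factors from the Jacobian and from the edge arclength each contribute at most one power of $(1+c_K)$. Once the scaling is organized, the proof is a short computation: pull back, apply the reference inequality, push forward, and collect the $(1+c_K)$ factors. I would present it as (i) the reference estimate, (ii) the change-of-variables bounds for $F_K$ in terms of $B_K$ and $c_K$, (iii) assembling the pieces and identifying $\|B_K\|$ with $h_K$.
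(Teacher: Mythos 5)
Your approach is genuinely different from the paper's, and unfortunately it cannot recover the stated constant. The paper does not pull back to $\hat K$ at all; it works directly on $K$ by applying the divergence theorem to the field $(x - P_1)\,u^2$, where $P_1$ is the vertex of $K$ opposite the curved edge $F$. The whole point of that choice is geometric: on each of the two \emph{straight} edges of $K$, the outward normal is orthogonal to $x - P_1$ (since $P_1$ lies on that edge and the edge is a line segment), so the flux through the straight edges vanishes identically. What remains is the flux through $F$, bounded by $\operatorname{diam}(K)\int_F u^2$, plus an interior term $\int_K u\,(x-P_1)\cdot\nabla u$ handled by Cauchy--Schwarz and absorption of $\tfrac12\|u\|^2_{L^2(K)}$; the factor $(1+c_K)$ enters only through $\operatorname{diam}(K)\le (1+c_K)h_K$, which is \eqref{eq:diamK-hK}. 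No reference triangle, no Jacobians.

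The gap in your pull-back plan is that the Jacobians of $F_K$ are genuinely nonconstant, and comparing $\|u\|^2_{L^2(K)}$ with $|u|^2_{H^1(K)}$ \emph{through} $\hat K$ forces you to pay the ratio $\max_{\hat K}|\det DF_K|\,/\,\min_{\hat K}|\det DF_K|$. Since $DF_K = B_K(I + B_K^{-1}D\Phi_K)$ with $\|B_K^{-1}D\Phi_K\|\le c_K<1$, that ratio is of order $\bigl((1+c_K)/(1-c_K)\bigr)^2$, and the edge arclength Jacobian contributes another $(1-c_K)^{-1}$. These $(1-c_K)^{-1}$ factors blow up as $c_K\to 1$, whereas the claimed constant $2(1+c_K)^2$ stays bounded by $8$; so you would end up proving a correct but strictly weaker inequality with a constant depending on $(1-c_K)^{-1}$, not the one stated. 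The normalization "make the trace constant on $\hat K$ equal $1$" does not repair this, because the bad factors come from the variation of the Jacobian over the curved element, not from the choice of $\hat K$. Your reference inequality with constant ``exactly $2$'' is also an assertion, not a proof, for a fixed $\hat K$. If you only need the estimate up to a shape-regularity constant (which is all the paper's later lemmas use), your route is fine; but to obtain the lemma as stated you should switch to the direct divergence-theorem argument centered at the vertex opposite $F$.
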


We are now ready to give the boundedness result by combing Lemma \ref{lm:inverse} (inverse estimate), Lemma \ref{lm:trace} (trace estimate), and Lemma \ref{lm:poin} (Poincar\'e-type estimate). 
\begin{lemma}[boundedness]\label{lm:boundedness}Under the Assumption \ref{ass:problem}, let $\{\T_{h}\}_{h>0}$ be regular of order $2$. Then, it holds that 
\begin{equation}\label{eq:boundedness} 
    b_{h}(w_{h},v_{h}) \leq C_{\rm{b}} \|w_{h}\|_{h}\|v_{h}\|_{h} \qquad w_{h},v_{h} \in V^{\ell}_{h,0}. 
\end{equation}
Here the constant $C_{\rm{b}}$ depends on the the polynomial degree $k$, the shape-regular constant $\sigma$ and $c_{1}, c_{2}, c_{3}$.
\end{lemma}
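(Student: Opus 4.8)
The plan is to bound the two contributions to $b_h(w_h,v_h)$ separately: the volume term $\sum_{K}(\gamma A:D^2 w_h,\Delta v_h)_K$ and the stabilization term $s_h(w_h,v_h)$. For the volume term, since $\gamma A \in L^\infty$ (indeed $|\gamma A|\le \sqrt{2}$ under the Cordes condition, because $|\gamma A|=|A|/|A|^2\cdot\sqrt{(\mathrm{tr}A)^2}\cdot\frac{1}{|A|}\le\cdots$; in any case $|\gamma A|$ is uniformly bounded by the ellipticity constants), a pointwise Cauchy--Schwarz gives $|\gamma A:D^2 w_h|\lesssim |D^2 w_h|$, and then Cauchy--Schwarz over $K$ together with $\|\Delta v_h\|_{L^2(K)}\le\sqrt{2}\,|v_h|_{H^2(K)}$ yields $\sum_K (\gamma A:D^2 w_h,\Delta v_h)_K \lesssim \big(\sum_K|w_h|_{H^2(K)}^2\big)^{1/2}\big(\sum_K|v_h|_{H^2(K)}^2\big)^{1/2}\le\|w_h\|_h\|v_h\|_h$. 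This part is routine.

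The work is in controlling $s_h(w_h,v_h)$, which has an interior-edge part and a boundary-edge part. For the interior part $\sum_{F\in\F^i_h}\int_F [\tfrac{\p w_h}{\p n}]\tfrac{\p^2 v_h}{\p t_F^2}\,\ds$, I would apply Cauchy--Schwarz on each $F$, then estimate each factor by a trace inequality (Lemma \ref{lm:trace}) followed by an inverse estimate (Lemma \ref{lm:inverse}). For the second factor, $\|\tfrac{\p^2 v_h}{\p t_F^2}\|_{L^2(F)}\lesssim h_F^{-1/2}\|D^2 v_h\|_{L^2(K)}+h_F^{1/2}|D^2 v_h|_{H^1(K)}\lesssim h_F^{-1/2}|v_h|_{H^2(K)}$, using the inverse estimate to absorb the $H^3$-seminorm into the $H^2$-seminorm (this is where regularity of order $2$, hence $k\ge 2$, is used, so Lemma \ref{lm:inverse} applies with $s=3$). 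For the first factor, $\|[\tfrac{\p w_h}{\p n}]\|_{L^2(F)}$ is a jump; on each side $\|\tfrac{\p w_h}{\p n^\pm}\|_{L^2(F)}\lesssim h_K^{-1/2}|w_h|_{H^1(K^\pm)}+h_K^{1/2}|w_h|_{H^2(K^\pm)}\lesssim h_K^{1/2}|w_h|_{H^2(K^\pm)}$ — but here one must get rid of the $|w_h|_{H^1(K)}$ term, which is not part of $\|\cdot\|_h$. The key point is that $\tfrac{\p w_h}{\p n}$ is itself a piecewise polynomial vanishing at the two endpoints of $F$ by the $C^1$-continuity at vertices assumption hidden in... actually, $w_h\in V^\ell_{h,0}$ does not make $\tfrac{\p w_h}{\p n}$ vanish at the vertices; instead the jump $[\tfrac{\p w_h}{\p n}]$ vanishes at both endpoints of $F$ (again by $C^1$-continuity at vertices, exactly as in the proof of Theorem \ref{thm:dis-mt-obl}, where $J_1=0$). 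Hence I can apply the Poincar\'e-type estimate (Lemma \ref{lm:poin}, or its straight-edge analogue) in the tangential direction on $F$ to bound $\|[\tfrac{\p w_h}{\p n}]\|_{L^2(F)}\lesssim h_F\|\tfrac{\p}{\p t_F}[\tfrac{\p w_h}{\p n}]\|_{L^2(F)}=h_F\|[\tfrac{\p^2 w_h}{\p t_F\p n}]\|_{L^2(F)}$, and then a trace plus inverse estimate on this second-order quantity gives $\lesssim h_F\cdot h_F^{-1/2}|w_h|_{H^2(K)}=h_F^{1/2}|w_h|_{H^2(K)}$, with no stray $H^1$ term. Multiplying the two factors, the powers of $h_F$ cancel and summing over $F$ with Cauchy--Schwarz gives the interior part $\lesssim \big(\sum_K|w_h|_{H^2(K)}^2\big)^{1/2}\big(\sum_K|v_h|_{H^2(K)}^2\big)^{1/2}$.

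For the boundary part $\sum_{F\in\F^\p_h}\int_F \dot{\big(\tfrac{\p w_h}{\p\ell}\big)}\tfrac{\p v_h}{\p\ell^\perp}\,\ds$, I would Cauchy--Schwarz on each $F$ and bound $\|\tfrac{\p v_h}{\p\ell^\perp}\|_{L^2(F)}\le\|\nabla v_h\|_{L^2(F)}$ (since $\ell$ is a unit vector), which is directly controlled by the boundary term $\chi_0\sum_F\|\nabla v_h\|_{L^2(F)}^2\le\|v_h\|_h^2$ after dividing by $\chi_0>0$ (this is where $\chi_0>0$ from Assumption \ref{ass:problem} is used, and the constant $C_{\rm b}$ picks up a $\chi_0^{-1}$; alternatively one may also bound $\|\nabla v_h\|_{L^2(F)}$ via the trace--inverse route as in the interior case, avoiding the $\chi_0$-dependence but then one must again remove an $H^1$ term — since $v_h\in V^\ell_{h,0}$ does not kill $\nabla v_h$ at vertices, I'd keep the $\chi_0^{-1}$ route for the $v_h$ factor). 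For the other factor, $\dot{\big(\tfrac{\p w_h}{\p\ell}\big)}=\tfrac{\mathrm d}{\dphi}\big(\tfrac{\p w_h}{\p\ell}(\bo x(\varphi))\big)$ involves, by the chain rule and expansions \eqref{eq:dis-mt-ell}, terms $\ell_i n_j \p_{ij}w_h$ and $\dot\ell_i\p_i w_h$; thus $\|\dot{\big(\tfrac{\p w_h}{\p\ell}\big)}\|_{L^2(F)}\lesssim \|D^2 w_h\|_{L^2(F)}+\|\nabla w_h\|_{L^2(F)}$, with the hidden constant depending on $\|\ell\|_{C^1}$. Apply the trace estimate and inverse estimate to each: $\|D^2 w_h\|_{L^2(F)}\lesssim h_F^{-1/2}|w_h|_{H^2(K)}+h_F^{1/2}|w_h|_{H^3(K)}\lesssim h_F^{-1/2}|w_h|_{H^2(K)}$, and $\|\nabla w_h\|_{L^2(F)}\lesssim h_F^{-1/2}|w_h|_{H^1(K)}+h_F^{1/2}|w_h|_{H^2(K)}$; the $H^1$ term here is again problematic. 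To handle it, I would again use that $\tfrac{\p w_h}{\p\ell}$ is a polynomial on $F$ that is \emph{constant} is too strong; but $w_h\in V^\ell_{h,0}$ only makes $\tfrac{\p w_h}{\p\ell}(\bo x_i)$ equal to a common constant over all boundary vertices — so $\dot{\big(\tfrac{\p w_h}{\p\ell}\big)}$ itself need not vanish at vertices. Consequently, unlike the interior case, one cannot entirely remove the $\nabla w_h$-on-the-boundary contribution by a Poincar\'e argument; instead I would absorb $\|\nabla w_h\|_{L^2(F)}$ directly into $\chi_0^{-1/2}\|w_h\|_h$ via the boundary term in the norm, and absorb $h_F^{1/2}|w_h|_{H^2(K)}\lesssim h_F^{1/2}\cdot h^{-1/2}|w_h|_{H^2(K)}$... more carefully, $\|\nabla w_h\|_{L^2(F)}$ is literally one of the quantities summed in $\|w_h\|_h^2$ (up to the factor $\chi_0$), so no inverse estimate is needed for it at all: $\|\dot{(\tfrac{\p w_h}{\p\ell})}\|_{L^2(F)}\lesssim h_F^{-1/2}|w_h|_{H^2(K)}+\|\nabla w_h\|_{L^2(F)}$, and then in the product with $\|\tfrac{\p v_h}{\p\ell^\perp}\|_{L^2(F)}\le\|\nabla v_h\|_{L^2(F)}$, after Cauchy--Schwarz over $F$ and using $h_F^{-1/2}\cdot\|\nabla v_h\|_{L^2(F)}$ one needs $h_F^{-1/2}$ absorbed — this forces a trace--inverse bound $\|\nabla v_h\|_{L^2(F)}\lesssim h_F^{-1/2}|v_h|_{H^2(K)}+h_F^{1/2}|v_h|_{H^3(K)}\lesssim$ ... but that reintroduces $|v_h|_{H^1}$; so the clean route really is to split $\dot{(\tfrac{\p w_h}{\p\ell})}$-versus-$\tfrac{\p v_h}{\p\ell^\perp}$ as (trace+inverse bounded, giving $h_F^{-1/2}|w_h|_{H^2(K)}$ plus the norm-controlled $\|\nabla w_h\|_{L^2(F)}$) times ($\|\nabla v_h\|_{L^2(F)}$ which is norm-controlled up to $\chi_0$), and to balance the $h_F^{-1/2}$ against a trace--inverse bound of the \emph{other} factor when needed — ultimately every term is bounded by $\|w_h\|_h\|v_h\|_h$ with a constant involving $\chi_0^{-1}$, $k$, $\sigma$, $c_1,c_2,c_3$, and $\|\ell\|_{C^1(\p\Omega)}$. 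Collecting the volume, interior-edge, and boundary-edge estimates and using $\frac{2-\sqrt{1-\varepsilon}}{2}\le 1$ gives \eqref{eq:boundedness}. The main obstacle, as the discussion above makes clear, is the bookkeeping needed to ensure that no uncontrolled $H^1$-seminorm (interior) or stray inverse power of $h_F$ survives: interior-edge terms are saved by the vertex-vanishing of the jump (Poincar\'e in the tangential variable, Lemma \ref{lm:poin}), while boundary-edge terms are saved by the presence of $\chi_0\int_F|\nabla v|^2$ directly in $\|\cdot\|_h$.
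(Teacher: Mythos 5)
Your volume-term and interior-edge arguments follow the paper's strategy in spirit (Cauchy–Schwarz with $h_F^{\pm 1}$ weights, a one-dimensional Poincar\'e on $F$ using the vertex-vanishing of $[\tfrac{\p w_h}{\p n}]$, then trace and inverse estimates), but two points do not hold up.

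First, in the interior-edge estimate you assert that the trace-plus-inverse step leaves ``no stray $H^1$ term.'' On a \emph{curved} triangle $K$, Lemma~\ref{lm:inverse} gives $h_K|w_h|_{H^3(K)}\lesssim |w_h|_{H^1(K)}+|w_h|_{H^2(K)}$, so the $H^1$-seminorm does survive whenever $K$ meets $\p\Omega$. The paper handles it explicitly: it introduces the indicator $\delta(K)$ for boundary elements and invokes Lemma~\ref{lm:poin} (applied to $\nabla w_h$) to trade $\delta(K)|w_h|^2_{H^1(K)}$ for $\|\nabla w_h\|^2_{L^2(F)}$ on boundary edges, which is part of $\|\cdot\|_h$. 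Your write-up omits this entirely.

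Second, and more seriously, your argument for the boundary-edge term does not close. After Cauchy–Schwarz the $\dot{(\p w_h/\p\ell)}$ factor, bounded by trace and inverse estimates, produces $h_F^{-1/2}|w_h|_{H^2(K)}$ (plus a controllable boundary piece); meanwhile $\|\p v_h/\p\ell^\perp\|_{L^2(F)}\le\|\nabla v_h\|_{L^2(F)}$ supplies no compensating power of $h_F$ whatsoever — it is an $O(1)$-weighted quantity in $\|\cdot\|_h$. Your attempt to ``balance the $h_F^{-1/2}$ against a trace–inverse bound of the other factor when needed'' cannot succeed: trace-plus-inverse applied to $\nabla v_h$ also starts at $h_F^{-1/2}$, it cannot produce $h_F^{1/2}$. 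The ingredient you are missing is structural: since $w_h\in V^\ell_{h,0}$, $\tfrac{\p w_h}{\p\ell}$ equals the same constant at every boundary vertex, hence $\int_F\dot{(\tfrac{\p w_h}{\p\ell})}\,\ds=\tfrac{\p w_h}{\p\ell}(\bo x_F^{(2)})-\tfrac{\p w_h}{\p\ell}(\bo x_F^{(1)})=0$. This lets one subtract $c_F:=\fint_F\tfrac{\p v_h}{\p\ell^\perp}\,\ds$ for free, apply a one-dimensional Poincar\'e on $F$ to convert $h_F^{-1}\|\tfrac{\p v_h}{\p\ell^\perp}-c_F\|^2_{L^2(F)}$ into $h_F\|\dot{(\tfrac{\p v_h}{\p\ell^\perp})}\|^2_{L^2(F)}$, and thereby obtain a symmetric $h_F^{1/2}\times h_F^{-1/2}$ pairing of two second-order tangential quantities, each of which is then controlled by $|\cdot|_{H^2(K)}$ and (via Lemma~\ref{lm:poin} once more) by $\|\nabla\cdot\|_{L^2(F)}$. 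This mean-zero trick on the $w_h$-side is the crux of Step~3 in the paper, and without it the boundary-edge estimate fails.
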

\begin{proof}
\underline{\textit{Step 1}:} For the first term of $b_{h}(w_{h},v_{h})$, by Lemma \ref{lm:cordes} (property of Cordes condition) we have 
\begin{equation*} 
    \begin{aligned}
        I_{1} 
        &:= \left|\sum_{K\in\T_{h}}((\gamma A-I):D^{2}w_{h},\Delta v_{h})_{K} + 
        \sum_{K\in\T_{h}}(\Delta w_{h},\Delta v_{h})_{K}\right| \lesssim \|w_{h}\|_{h}\|v_{h}\|_{h}.\\ 
    \end{aligned}
\end{equation*} 
\underline{\textit{Step 2}:} We estimate the first term of $s_{h}(w_{h},v_{h})$. 
Note that $[\frac{\p w_{h}}{\p n}]$ vanishes on the two vertices of $F \in \F^{i}_{h}$, a Poincar\'e inequality on $F$ leads to 
\begin{equation}\label{eq:bound-Fi-0} 
    \begin{aligned}
        I_{2} &:= 
        \left| \sum_{F\in\F^{i}_{h}}\int_{F}[\frac{\p w_{h}}{\p n}]\frac{\p^{2} v_{h}}{\p t_{F}^{2}}\ds\right| \\ 
        &\leq\left(\sum_{F\in\F^{i}_{h}}h_{F}^{-1}\|[\frac{\p w_{h}}{\p n}]\|_{L^{2}(F)}^{2}\right)^{1/2}
        \left(\sum_{F\in\F^{i}_{h}}h_{F}\|\frac{\p^{2} v_{h}}{\p t_{F}^{2}}\|_{L^{2}(F)}^{2}\right)^{1/2}\\
        &\lesssim\left(\sum_{F\in\F^{i}_{h}}h_{F}\|\frac{\p}{\p t_{F}}[\frac{\p w_{h}}{\p n}]\|_{L^{2}(F)}^{2}\right)^{1/2}
        \left(\sum_{F\in\F^{i}_{h}}h_{F}\|\frac{\p^{2} v_{h}}{\p t_{F}^{2}}\|_{L^{2}(F)}^{2}\right)^{1/2}. 
        \end{aligned} 
\end{equation} 
We define an indicator function $\delta: \T_{h} \rightarrow \{0,1\}$ as 
$ \delta(K) := \begin{cases}
    1, & |\p K\cap\p \Omega|\neq 0\\
    0, & |\p K\cap\p \Omega|= 0 \\ 
\end{cases}.  $
For an interior edge $F=K^{+}\cap K^{-}$, Lemma \ref{lm:trace} (trace estimate) leads to 
\begin{equation*} 
    \begin{aligned}
        h_{F}\|\frac{\p}{\p t_{F}}[\frac{\p w_{h}}{\p n}]\|_{L^{2}(F)}^{2} &\lesssim 
        \sum_{K \in \{K^{+},K^{-}\}}|w_{h}|_{H^{2}(K)}^{2} + h_{K}^2|w_{h}|_{H^{3}(K)}^{2}\\ 
        &\lesssim
        \sum_{K \in \{K^{+},K^{-}\}}|w_{h}|^{2}_{H^{2}(K)} + \delta(K)|w_{h}|^{2}_{H^{1}(K)}.\\
    \end{aligned}
\end{equation*} 
In the last step, we use the standard inverse estimate \cite{brenner2007mathematical} for the term  $|w_{h}|_{H^{3}(K)}^{2}$ if $K$ is a straight triangle. 
If $K$ is a curved triangle,  we apply Lemma \ref{lm:inverse} (inverse estimate), which requires $\{T_{h}\}_{h>0}$ to be regular of order $2$. 
Then, applying Lemma \ref{lm:poin} (Poincar\'e-type estimate), we arrive at
\begin{equation}\label{eq:bound-Fi-1} 
\begin{aligned}
\sum_{F\in\F^{i}_{h}}h_{F}\|\frac{\p}{\p t_{F}}[\frac{\p w_{h}}{\p n}]\|_{L^{2}(F)}^{2} &\lesssim
\sum_{K\in\T_{h}}|w_{h}|^{2}_{H^{2}(K)} + \delta(K)|w_{h}|^{2}_{H^{1}(K)}\\ 
&\lesssim \sum_{K\in\T_{h}}|w_{h}|^{2}_{H^{2}(K)}+ \sum_{F\in\F^{\p}_{h}}\|\nabla w_{h}\|_{L^{2}(F)}^{2} \lesssim \|w_{h}\|^{2}_{h}.
\end{aligned}
\end{equation}  
Similar arguments as \eqref{eq:bound-Fi-1} lead to 
$ \sum_{F\in\F^{i}_{h}}h_{F}\|\frac{\p^{2} v_{h}}{\p t_{F}^{2}}\|_{L^{2}(F)}^{2} \lesssim 
 \|v_{h}\|^{2}_{h}.$
This, combined with \eqref{eq:bound-Fi-1}, leads to $I_{2}\lesssim \|w_{h}\|_{h}\|v_{h}\|_{h}$.

\underline{\textit{Step 3}:} Now we estimate the second term of $s_{h}(w_{h},v_{h})$. By the boundary condition of $V_{h,0}^{\ell}$, we know 
    $\int_{F}\dot{(\frac{\p w_{h}}{\p \ell})}\ds = \frac{\p w_{h}}{\p \ell}(\bo{x}^{(2)}_{F})- \frac{\p w_{h}}{\p \ell}(\bo{x}^{(1)}_{F}) = 0$ for any boundary edge $F$. 
Denote $c_{F} := \fint_{F}\frac{\p v_{h}}{\p\ell^{\perp}}\ds$, a Poincar\'e inequality on $F$ leads to 
\begin{equation}\label{eq:bound-Fp-0} 
    \begin{aligned}
        I_{3} &:= \left|\sum_{F\in\F^{\p}_{h}}\int_{F}
        \dot{\left(\frac{\p w_{h}}{\p \ell}\right)}\frac{\p v_{h}}{\p\ell^{\perp}}\ds\right|
        =\left|\sum_{F\in\F^{\p}_{h}}\int_{F}
        \dot{\left(\frac{\p w_{h}}{\p \ell}\right)}(\frac{\p v_{h}}{\p\ell^{\perp}}-c_{F})\ds\right|\\ 
        & \leq 
        \left(\sum_{F\in\F^{\p}_{h}}h_{F}\|\dot{\left(\frac{\p w_{h}}{\p \ell}\right)}\|_{L^{2}(F)}^{2}  \right)^{1/2}
        \left(\sum_{F\in\F^{\p}_{h}}h_{F}^{-1}\|\frac{\p v_{h}}{\p\ell^{\perp}}-c_{F}\|^{2}_{L^{2}(F)}\right)^{1/2}\\
        & \lesssim 
        \left(\sum_{F\in\F^{\p}_{h}}h_{F}\|\dot{\left(\frac{\p w_{h}}{\p \ell}\right)}\|_{L^{2}(F)}^{2}  \right)^{1/2}
        \left(\sum_{F\in\F^{\p}_{h}}h_{F}\|\dot{\left(\frac{\p v_{h}}{\p\ell^{\perp}}\right)}\|^{2}_{L^{2}(F)}\right)^{1/2}.
        \end{aligned}
\end{equation} 
For a boundary edge $F=\p K\cap\p\Omega$, combining Lemma \ref{lm:trace} (trace estimate), Lemma \ref{lm:inverse} (inverse estimate) and \eqref{eq:dis-mt-ell}, we have 
\begin{equation*} 
\begin{aligned}
    h_{F}\|\dot{\left(\frac{\p w_{h}}{\p \ell}\right)}\|_{L^{2}(F)}^{2}&\lesssim
    h_{F}\left(\|\nabla w_{h}\|_{L^{2}(F)}^{2}+\|D^{2}w_{h}\|_{L^{2}(F)}^{2}\right) ~~~~~~~~~~~~~~~~\text{(by \eqref{eq:dis-mt-ell})}\\ 
    &\lesssim |w_{h}|^{2}_{H^{1}(K)}+(h_{K}^{2}+1)|w_{h}|_{H^{2}(K)}^{2}+h_{K}^{2}|w_{h}|_{H^{3}(K)}^{2} ~\text{(Lemma \ref{lm:trace})}\\ 
    &\lesssim |w_{h}|^{2}_{H^{1}(K)}+|w_{h}|_{H^{2}(K)}^{2}.~~~~~~~~~~~~~~~~~~~~~~~~~~~~~~~~\text{(Lemma \ref{lm:inverse})}
\end{aligned}
\end{equation*} 
By Lemma \ref{lm:poin} (Poincar\'e-type estimate), we obtain 
\begin{equation}\label{eq:bound-Fp-1} 
    h_{F}\|\dot{\left(\frac{\p w_{h}}{\p \ell}\right)}\|_{L^{2}(F)}^{2} \lesssim 
    |w_{h}|_{H^{2}(K)}^{2} + \|\nabla w_{h}\|^{2}_{L^{2}(F)}.
\end{equation} 
Similarly $
h_{F}\|\dot{\left(\frac{\p v_{h}}{\p\ell^{\perp}}\right)}\|^{2}_{L^{2}(F)}\lesssim 
|v_{h}|_{H^{2}(K)}^{2} + \|\nabla v_{h}\|^{2}_{L^{2}(F)}.$
This, combined with \eqref{eq:bound-Fp-1}, gives $I_{3} \lesssim \|w_{h}\|_{h}\|v_{h}\|_{h}$. Then, the proof is concluded by combing the estimate for $I_{i}$, $i=1,2,3$. 
\end{proof}

\begin{theorem}
    \label{thm:num-wellpose}Under the Assumption \ref{ass:problem}, 
    let $u\in H^{2}_{\ell,0}(\Omega)$ be the $H^{2}$ strong solution of \eqref{eq:obl-der-problem}. 
    Let $\{\T_{h}\}_{h>0}$ be regular of order $2$. 
    Then, there exists a unique solution $u_{h} \in V_{h,0}^{\ell}$ to \eqref{eq:num-scheme} satisfying  
\begin{equation*}
\begin{aligned}
    &\left(\sum_{K\in\T_{h}}|u-u_{h}|_{H^{2}(K)}^{2}\right)^{1/2} \\ 
    &\leq \frac{C_{\rm{err}}}{1-\sqrt{1-\varepsilon}} \inf_{w_{h}\in V^{\ell}_{h,0} }
    \left(  
   \sum_{K\in\T_{h}}|u-w_{h}|^{2}_{H^{2}(K)} + h_{K}^{-1}\|\nabla (u-w_{h})\|^{2}_{L^{2}(\p K)}
    \right)^{1/2}.
\end{aligned}
\end{equation*} 
Here, the $C_{\rm{err}}$ depends on the the polynomial degree $k$, the shape-regular constant $\sigma$ and $c_{1}, c_{2}, c_{3}$.  
\end{theorem}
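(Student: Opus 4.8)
The plan is to apply the standard Lax--Milgram / Céa-type machinery on the finite-dimensional space $V_{h,0}^{\ell}$, treating the broken norm $\|\cdot\|_h$ as the energy norm, and then to simplify the resulting best-approximation bound using the norm equivalence implicit in the coercivity proof.

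First I would note that by Lemma~\ref{lm:coer} (coercivity) the bilinear form $b_h(\cdot,\cdot)$ is coercive on $V_{h,0}^{\ell}$ with constant $1-\sqrt{1-\varepsilon}$, and by Lemma~\ref{lm:boundedness} it is bounded with constant $C_{\rm b}$; since $V_{h,0}^{\ell}$ is finite-dimensional and the linear form $l_h$ is bounded (by Cauchy--Schwarz and the definition of $\|\cdot\|_h$), the Lax--Milgram theorem gives existence and uniqueness of $u_h\in V_{h,0}^{\ell}$ solving \eqref{eq:num-scheme}. For the error estimate, I would take any $w_h\in V_{h,0}^{\ell}$ and write $u_h-w_h\in V_{h,0}^{\ell}$. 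Using coercivity on $u_h-w_h$, then Galerkin orthogonality $b_h(u-u_h,v_h)=0$ for $v_h\in V_{h,0}^{\ell}$ (which combines Lemma~\ref{lm:consistence} with \eqref{eq:num-scheme}), we get
\begin{equation*}
(1-\sqrt{1-\varepsilon})\|u_h-w_h\|_h^2 \leq b_h(u_h-w_h,u_h-w_h) = b_h(u-w_h,u_h-w_h).
\end{equation*}
The subtlety here is that $u\notin V_h$, so $b_h(u-w_h,\cdot)$ must be estimated directly rather than via Lemma~\ref{lm:boundedness}; I would bound it by $\widetilde C\,\vertiii{u-w_h}\,\|u_h-w_h\|_h$ where $\vertiii{v}^2 := \sum_{K}|v|_{H^2(K)}^2 + \sum_K h_K^{-1}\|\nabla v\|_{L^2(\partial K)}^2$ is the stronger norm appearing on the right-hand side of the theorem. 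This boundedness-on-the-larger-norm estimate is the technical heart of the argument: the first term of $b_h$ is handled by Lemma~\ref{lm:cordes} and Cauchy--Schwarz exactly as in Lemma~\ref{lm:boundedness} Step~1, while the stabilization term $s_h(u-w_h,u_h-w_h)$ requires estimating $\|[\tfrac{\partial(u-w_h)}{\partial n}]\|_{L^2(F)}$ and $\|\dot{(\tfrac{\partial(u-w_h)}{\partial\ell})}\|_{L^2(F)}$ on edges by $h_F^{1/2}$ times $\vertiii{u-w_h}$ restricted near $F$, using the trace inequality directly on $u-w_h\in H^2$ (no inverse estimate is needed on the non-polynomial factor, which is precisely why the $h_K^{-1}\|\nabla\cdot\|_{L^2(\partial K)}^2$ term must be kept).

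Dividing through by $\|u_h-w_h\|_h$ gives $\|u_h-w_h\|_h \leq \frac{\widetilde C}{1-\sqrt{1-\varepsilon}}\vertiii{u-w_h}$. Then by the triangle inequality $\|u-u_h\|_h \leq \|u-w_h\|_h + \|u_h-w_h\|_h$, and since $\|u-w_h\|_h \leq \vertiii{u-w_h}$ trivially (indeed $\sum_F\int_F|\nabla v|^2\,\mathrm ds \lesssim \sum_K h_K\|\nabla v\|_{L^2(\partial K)}^2 \le h\sum_K h_K^{-1}\cdots$ up to the $\chi_0$ and $h$ factors, absorbed into the constant), we obtain
\begin{equation*}
\Big(\sum_{K\in\T_h}|u-u_h|_{H^2(K)}^2\Big)^{1/2} \le \|u-u_h\|_h \le \frac{C_{\rm err}}{1-\sqrt{1-\varepsilon}}\,\vertiii{u-w_h},
\end{equation*}
and taking the infimum over $w_h\in V_{h,0}^{\ell}$ finishes the proof. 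The main obstacle I anticipate is the careful edge-by-edge estimate of $s_h(u-w_h,\cdot)$: one must route each boundary/jump term through the correct trace inequality on $u-w_h$ (keeping it on the $H^2$ side, not the discrete side) and sum over edges using a finite-overlap argument so that the local neighborhoods $\omega_K$ in the final bound carry only an $h$-independent multiplicity; everything else is bookkeeping with the triangle inequality and the already-established coercivity and consistency lemmas.
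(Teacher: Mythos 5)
Your overall Céa-type structure — existence via Lax--Milgram, then coercivity plus Galerkin orthogonality to reduce to a best-approximation bound, with $b_h(u-w_h,\psi_h)$ estimated term by term against a stronger mesh-dependent norm — is exactly what the paper does, and your treatment of the first term $I_1$ and of the interior-edge jump term is correct. The gap is in the boundary stabilization term. You propose to bound $\|\dot{(\tfrac{\partial(u-w_h)}{\partial\ell})}\|_{L^2(F)}$ ``using the trace inequality directly on $u-w_h\in H^2$,'' but by \eqref{eq:dis-mt-ell} this quantity involves the full second derivatives $D^2(u-w_h)$ traced on $F$, and $H^2(K)$ regularity only gives $\nabla(u-w_h)\in H^{1/2}(\partial K)$, not $H^1(\partial K)$; its arc-derivative lives in $H^{-1/2}(F)$, not $L^2(F)$. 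Thus there is no bound of the form $\|\dot{(\tfrac{\partial(u-w_h)}{\partial\ell})}\|_{L^2(F)}\lesssim h_F^{1/2}\bigl(|u-w_h|_{H^2(K)} + h_K^{-1/2}\|\nabla(u-w_h)\|_{L^2(\partial K)}\bigr)$ for generic $u\in H^2$, and the quantity on the left-hand side of the theorem would not control the stabilization term under your estimate.

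The missing idea, which is the technical heart of the paper's proof of this theorem, is to integrate by parts along each boundary edge $F\in\mathcal{F}_h^\partial$ \emph{before} applying Cauchy--Schwarz:
\begin{equation*}
\sum_{F\in\mathcal{F}_h^\partial}\int_{F}\dot{\left(\frac{\partial(u-w_h)}{\partial\ell}\right)}\frac{\partial\psi_h}{\partial\ell^\perp}\,\mathrm{d}s
=-\sum_{F\in\mathcal{F}_h^\partial}\int_{F}\frac{\partial(u-w_h)}{\partial\ell}\,\dot{\left(\frac{\partial\psi_h}{\partial\ell^\perp}\right)}\,\mathrm{d}s,
\end{equation*}
where the endpoint contributions telescope to zero: $\frac{\partial u}{\partial\ell}$ is constant on $\partial\Omega$, $\frac{\partial w_h}{\partial\ell}$ is the same constant at every boundary vertex (definition of $V_{h,0}^\ell$), and $\psi_h$ is $C^1$ at vertices. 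This moves the arc-derivative onto the polynomial $\psi_h$, which is then controlled by the discrete trace and inverse estimates as in Step~3 of Lemma~\ref{lm:boundedness}, while the surviving factor $\frac{\partial(u-w_h)}{\partial\ell}$ is a \emph{first} derivative and is bounded by $\|\nabla(u-w_h)\|_{L^2(F)}$, exactly the quantity appearing in the theorem's right-hand side. Without this integration by parts (or, equivalently, without first exploiting $\dot{(\tfrac{\partial u}{\partial\ell})}\equiv 0$ to convert the offending factor into a polynomial one), your boundary-edge estimate does not close.
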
 
\begin{proof}
    The existence and uniqueness of the numerical solution $u_{h}\in V^{\ell}_{h,0}$ follow from Lemma \ref{lm:coer} (coercivity), Lemma \ref{lm:boundedness} (boundedness), and the Lax-Milgram Theorem. For any $w_{h}\in V^{\ell}_{h,0}$, denote $\psi_{h}:=u_{h}-w_{h} \in V^{\ell}_{h,0}$. Lemma \ref{lm:coer} (coercivity) and Lemma \ref{lm:consistence} (consistence) lead to 
    $$ \begin{aligned}
        \|\psi_{h}\|^{2}_{h} &\leq \frac{1}{1-\sqrt{1-\varepsilon}}b_{h}(u_{h}-w_{h},\psi_{h})=\frac{1}{1-\sqrt{1-\varepsilon}}b_{h}(u-w_{h},\psi_{h}).
    \end{aligned} $$ 
    Here, we denote 
    $$ \begin{aligned}
        b_{h}(u-w_{h},\psi_{h}) & \lesssim 
        \underbrace{|\sum_{K\in\T_{h}}(\gamma A:D^{2}(u-w_{h}),\Delta \psi_{h})_{K}|}_{I_{1}} + 
        \underbrace{|\sum_{F\in\F^{i}_{h}}\int_{F}[\frac{\p (u-w_{h})}{\p n}]\frac{\p^{2} \psi_{h}}{\p t_{F}^{2}}\ds|}_{I_{2}} \\ 
        & + 
        \underbrace{|\sum_{F\in\F^{\p}_{h}}\int_{F}\dot{\left(\frac{\p (u-w_{h})}{\p \ell}\right)}\frac{\p \psi_{h}}{\p \ell^{\perp}} \ds|}_{I_{3}}.
    \end{aligned} $$
    For $I_{1}$, similar arguments as \textit{Step 1} in Lemma \ref{lm:boundedness} (boundedness) lead to 
    $$ I_{1}\lesssim 
    \left(\sum_{K\in\T_{h}}|u-w_{h}|^{2}_{H^{2}(K)}\right)^{1/2}\|\psi_{h}\|_{h}. $$
    For $I_{2}$, we have 
    $$ \begin{aligned}
        I_{2} &\leq 
        \left(\sum_{F\in\F^{i}_{h}}h_{F}^{-1}\|[\frac{\p (u-w_{h})}{\p n}]\|^{2}_{L^{2}(F)}\right)^{1/2}
        \left(\sum_{F\in\F^{i}_{h}}h_{F}\|\frac{\p^{2} \psi_{h}}{\p t_{F}^{2}}\|^{2}_{L^{2}(F)}\right)^{1/2} \\ 
        &\lesssim \left(\sum_{F\in\F^{i}_{h}}h_{F}^{-1}\|[\frac{\p (u-w_{h})}{\p n}]\|^{2}_{L^{2}(F)}\right)^{1/2}
        \|\psi_{h}\|_{h}.
    \end{aligned} $$
    In the last step, we use the same argument as \textit{Step 2} \eqref{eq:bound-Fi-1} in Lemma \ref{lm:boundedness} (boundedness). 

    For $I_{3}$, we first apply the integration by part on edge $F \in \F^{\p}_{h}$. Due to the $C^{1}$-continuity on vertices of the finite element space and the boundary condition of $u \in H^{2}_{\ell,0}(\Omega)$, we obtain
    $$
    \begin{aligned}
        I_{3} &= |\sum_{F\in\F^{\p}_{h}}\int_{F}\dot{\left(\frac{\p (u-w_{h})}{\p \ell}\right)}\frac{\p \psi_{h}}{\p \ell^{\perp}} \ds| = |\sum_{F\in\F^{\p}_{h}}\int_{F}\frac{\p (u-w_{h})}{\p \ell}\dot{\left(\frac{\p \psi_{h}}{\p \ell^{\perp}}\right)} \ds| \\ 
        &\leq 
        \left(\sum_{F\in\F^{\p}_{h}}h_{F}^{-1}\|\frac{\p (u-w_{h})}{\p \ell}\|^{2}_{L^{2}(F)}\right)^{1/2}
        \left(\sum_{F\in\F^{\p}_{h}}h_{F}\|\dot{\left(\frac{\p \psi_{h}}{\p \ell^{\perp}}\right)}\|^{2}_{L^{2}(F)}\right)^{1/2} \\
        &\lesssim 
        \left(\sum_{F\in\F^{\p}_{h}}h_{F}^{-1}\|\nabla (u-w_{h})\|^{2}_{L^{2}(F)}\right)^{1/2}
        \|\psi_{h}\|_{h}.
    \end{aligned}
    $$
    In the last step, we use the same argument as  \textit{Step 3} \eqref{eq:bound-Fp-1} in Lemma \ref{lm:boundedness} (boundedness). By combing the estimate of $I_{i}$, $i=1,2,3$, we arrive
    $$ \|\psi_{h}\|_{h} \lesssim  \left(  
        \sum_{K\in\T_{h}}|u-w_{h}|^{2}_{H^{2}(K)} + h_{K}^{-1}\|\nabla (u-w_{h})\|^{2}_{L^{2}(\p K)}\right)^{1/2}. $$
    This, combined with the triangular inequality, concludes the proof.
\end{proof}

Taking $w_{h} = \Pi_{h}^{\ell} u$ in Theorem \ref{thm:num-wellpose}, and combining with \eqref{eq:app-Pi-ell-F} and \eqref{eq:app-Pi-ell-K} in Theorem \ref{thm:app_Vh_ell} (approximation property of $V^{\ell}_{h,0}$), we have the following error estimate result. 
\begin{corollary}
    \label{corollary:err}Under the Assumption \ref{ass:problem}, 
    assume that $\{ \mathcal{T}_{h} \}_{h>0}$ is regular of order $k$, 
    $u \in H^{s}(\Omega) \cap H^{2}_{\ell,0}(\Omega)$ is the $H^{2}$ strong solution of \eqref{eq:obl-der-problem}, and the oblique vector field $\ell$ is piecewise $C^{s-1}$ with $2\leq s \leq k+1$. 
    It holds that
    \begin{equation}\label{eq:err} 
        \left(\sum_{K\in\T_{h}}|u-u_{h}|_{H^{2}(K)}^{2}\right)^{1/2} 
        \lesssim \frac{1}{1 - \sqrt{1-\varepsilon}}
        \left(\sum_{K\in\T_{h}}h_{K}^{2s-4}\|u\|^{2}_{H^{s}(\omega_{K})}\right)^{1/2},
    \end{equation} 
    where the hidden constant depends on $\ell$, the polynomial degree $k$, the shape-regular constant $\sigma$ and $c_{1}, c_{2}, \cdots, c_{k+1}$. 
\end{corollary}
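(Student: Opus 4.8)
The plan is to chain together the two main ingredients already in hand: the quasi-optimal error bound of Theorem~\ref{thm:num-wellpose} and the approximation estimates of Theorem~\ref{thm:app_Vh_ell}. First I would note that, since $\{\T_{h}\}_{h>0}$ is regular of order $k\geq 2$, Theorem~\ref{thm:num-wellpose} applies; its right-hand side is an infimum over all $w_{h}\in V^{\ell}_{h,0}$, so choosing the specific competitor $w_{h}=\Pi^{\ell}_{h}u$ --- which is admissible precisely because Theorem~\ref{thm:app_Vh_ell} guarantees $\Pi^{\ell}_{h}u\in V^{\ell}_{h,0}$ --- yields
\begin{equation*}
\left(\sum_{K\in\T_{h}}|u-u_{h}|^{2}_{H^{2}(K)}\right)^{1/2}
\leq \frac{C_{\rm{err}}}{1-\sqrt{1-\varepsilon}}
\left(\sum_{K\in\T_{h}}|u-\Pi^{\ell}_{h}u|^{2}_{H^{2}(K)}
+ h_{K}^{-1}\|\nabla(u-\Pi^{\ell}_{h}u)\|^{2}_{L^{2}(\p K)}\right)^{1/2}.
\end{equation*}

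Next I would bound the two contributions on the right termwise via Theorem~\ref{thm:app_Vh_ell}. For the interior piece, \eqref{eq:app-Pi-ell-K} with $m=2$ gives $|u-\Pi^{\ell}_{h}u|^{2}_{H^{2}(K)}\lesssim h_{K}^{2(s-2)}\|u\|^{2}_{H^{s}(\omega_{K})}=h_{K}^{2s-4}\|u\|^{2}_{H^{s}(\omega_{K})}$. For the boundary piece, \eqref{eq:app-Pi-ell-F} gives $\|\nabla(u-\Pi^{\ell}_{h}u)\|^{2}_{L^{2}(\p K)}\lesssim h_{K}^{2(s-3/2)}\|u\|^{2}_{H^{s}(\omega_{K})}=h_{K}^{2s-3}\|u\|^{2}_{H^{s}(\omega_{K})}$, hence $h_{K}^{-1}\|\nabla(u-\Pi^{\ell}_{h}u)\|^{2}_{L^{2}(\p K)}\lesssim h_{K}^{2s-4}\|u\|^{2}_{H^{s}(\omega_{K})}$ as well; the two powers of $h_{K}$ coincide. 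Summing over $K\in\T_{h}$ and using the finite-overlap property of the local neighborhoods $\{\omega_{K}\}$ --- a consequence of shape-regularity --- to absorb the patch multiplicities into the hidden constant then delivers \eqref{eq:err}, with the dependence of that constant on $\ell$, $k$, $\sigma$, and $c_{1},\dots,c_{k+1}$ inherited directly from Theorem~\ref{thm:app_Vh_ell}.

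No genuine obstacle remains here: essentially all the difficulty has already been absorbed by Theorem~\ref{thm:num-wellpose} (the C\'ea-type estimate built from coercivity, boundedness, and consistency) and by Theorem~\ref{thm:app_Vh_ell} (the construction of the oblique-boundary-preserving quasi-interpolant together with its local $H^{m}$ and trace estimates). The one detail worth emphasizing is the exponent matching between the $H^{2}$-seminorm term and the rescaled boundary-gradient term: both reduce to $h_{K}^{2s-4}\|u\|^{2}_{H^{s}(\omega_{K})}$, which confirms that the weight $h_{K}^{-1}$ appearing in the error functional of Theorem~\ref{thm:num-wellpose} is exactly calibrated so that no loss of order is incurred and the resulting estimate is quasi-optimal.
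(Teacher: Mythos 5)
Your proof is correct and follows exactly the paper's route: take $w_h=\Pi^{\ell}_h u$ in Theorem~\ref{thm:num-wellpose} and invoke the approximation bounds \eqref{eq:app-Pi-ell-K} and \eqref{eq:app-Pi-ell-F} from Theorem~\ref{thm:app_Vh_ell}. The exponent check and the finite-overlap remark are the right (if implicit in the paper) supporting details.
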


\section{Numerical experiments} \label{sec:numerical}
This section presents numerical experiments of the proposed methods for \eqref{eq:obl-der-problem}. 
We apply the curved Hermite element discussed in Section \ref{sec:fem} with polynomial degree $k = 3$.
The convergence history plots are logarithmically scaled in all the convergence order experiments.

\subsection{Experiment 1}\label{exp:1}
In the first experiment, we consider the problem \eqref{eq:obl-der-problem} in the unit disk $\Omega := \{|x|^{2} < 1\}$.  
The coefficient matrix is set to be
$ A := \begin{pmatrix} 
    1 & 0 \\
    0 & 1 
    \end{pmatrix}. $ 
Take the oblique vector field $\ell$ to be the rotated normal vector, i.e., 
    $$ \ell := \sqrt{1/2} \begin{pmatrix}
          1 & -1 \\ 
          1 & 1 \\
         \end{pmatrix} n, $$
so that the rotation angle is $\theta = \pi/4$.     
To test the convergence rate, we consinder the smooth solution
$$ u = \sin(\pi(x_{1}^2 + x_{2}^2))e^{(x^{2}_{1}+x_{2}^{2})} - \frac{\pi(e+1)}{\pi^{2}+1}. $$
The function $f := A:D^{2}u$ is directly calculated from the coefficient matrix and solution.
A straightforward calculation shows that the compatibility constant $c = -\sqrt{2}\pi e$ and 
$\chi_{0} = 1$.
 
We apply the numerical scheme \eqref{eq:num-scheme} to the problem on a sequence of uniform triangulations $\{\mathcal{T}_{h}\}_{h>0}$. 
The discretization errors in the $L^{2}$, $H^{1}$ and $H^{2}$ norm are reported in Table \ref{tab:exp1}. 
We also display the approximations of the compatibility constant.  
The expected optimal convergence rate $\|D^{2}(u - u_{h})\|_{L^{2}(\mathcal{T}_{h})} = \mathcal{O}(h^{k-1})$ is observed, agreeing with Corollary \ref{corollary:err}. Further, the experiments indicate that the scheme converges with (sub-optimal) second-order convergence in both $H^{1}$ and $L^{2}$ norm. 
Convergence to the compatibility constant $c=-\sqrt{2}\pi e$ is also observed for the approximation $c_{h}$. 
\begin{table}[!htbp]
    \centering
    \begin{tabular}{llllllll}
    \toprule
      $h$  &$\|u - u_{h}\|_{L^{2}}$ & Order&$|u-u_h|_{H^{1}}$& Order&$|u-u_h|_{H^{2}}$& Order &$c_{h}$
    \\
    \midrule 
    $2^{-2}$ &  2.80e-02 &   0.00  &  2.88e-01  &  0.00    &4.95 &   0.00 &-12.128\\
    $2^{-3}$ &   7.67e-03 &   1.87 &  6.84e-02  &  2.07   & 2.81   & 0.82&-12.086\\
    $2^{-4}$ &   1.43e-03 &   2.43 &  1.20e-02   & 2.51   & 8.48e-01 &   1.73&-12.077\\
    $2^{-5}$ &   2.97e-04 &   2.26 &  2.36e-03   & 2.35   & 2.21e-01  &  1.94&-12.077\\
    $2^{-6}$ &   7.15e-05 &   2.05  &  5.58e-04   & 2.08  &  5.55e-02  &  1.99&-12.077 \\
    \bottomrule
    \end{tabular}
    \caption{Errors and observed convergence orders for Experiment 1.}\label{tab:exp1}
\end{table}

\subsection{Experiment 2}\label{exp:2}
In the second experiment, let the domain $\Omega := \{|x|^{2} < 1\}$. The coefficient matrix is set to be
$$ A := \begin{pmatrix} 
    2 & \frac{x_1 x_2}{|x_1 x_2|} \\
    \frac{x_1 x_2}{|x_1 x_2|} & 2 
    \end{pmatrix}. $$ 
A straightforward calculation shows the Cordes condition \eqref{eq:cordes} is satisfied with $\varepsilon = 3/5$. We note that the coefficient matrix is discontinuous across the set 
$\{(x_1, x_2) \in \Omega:  x_{1} = 0 \text{ or } x_{2} = 0\}.$
Take the oblique vector field $\ell$ to be the anti-clockwise rotation of the normal vector by the angle $\theta = s + \pi/4$, i.e., 
    $$ \ell := (\cos(2s + \pi/4), \sin(2s + \pi/4))^{T}, $$ where $s := \arctan(x_{2}/x_{1})$. The function $f$ is chosen so that the solution is given by 
$$ u = 1/6(x_{1}^2 + x_{2}^2)^3 - 1/2 (x_{1}^2 + x_{2}^2) + 5/24. $$
Directly calculation shows that the compatibility constant $c = 0$ and $\chi_{0} = 2$. 

The numerical result is displayed in Figure \ref{fig:exp2}, which is in agreement with Corollary \ref{corollary:err}. The convergence orders are similar to Experiment \ref{exp:1}. 
This experiment demonstrates the robustness of this proposed scheme with respect to the choice of oblique vector field and the discontinuous coefficient matrix. 

\begin{figure}[!htbp]
  \centering
  \captionsetup{justification=centering}
  \includegraphics[width=0.52\textwidth]{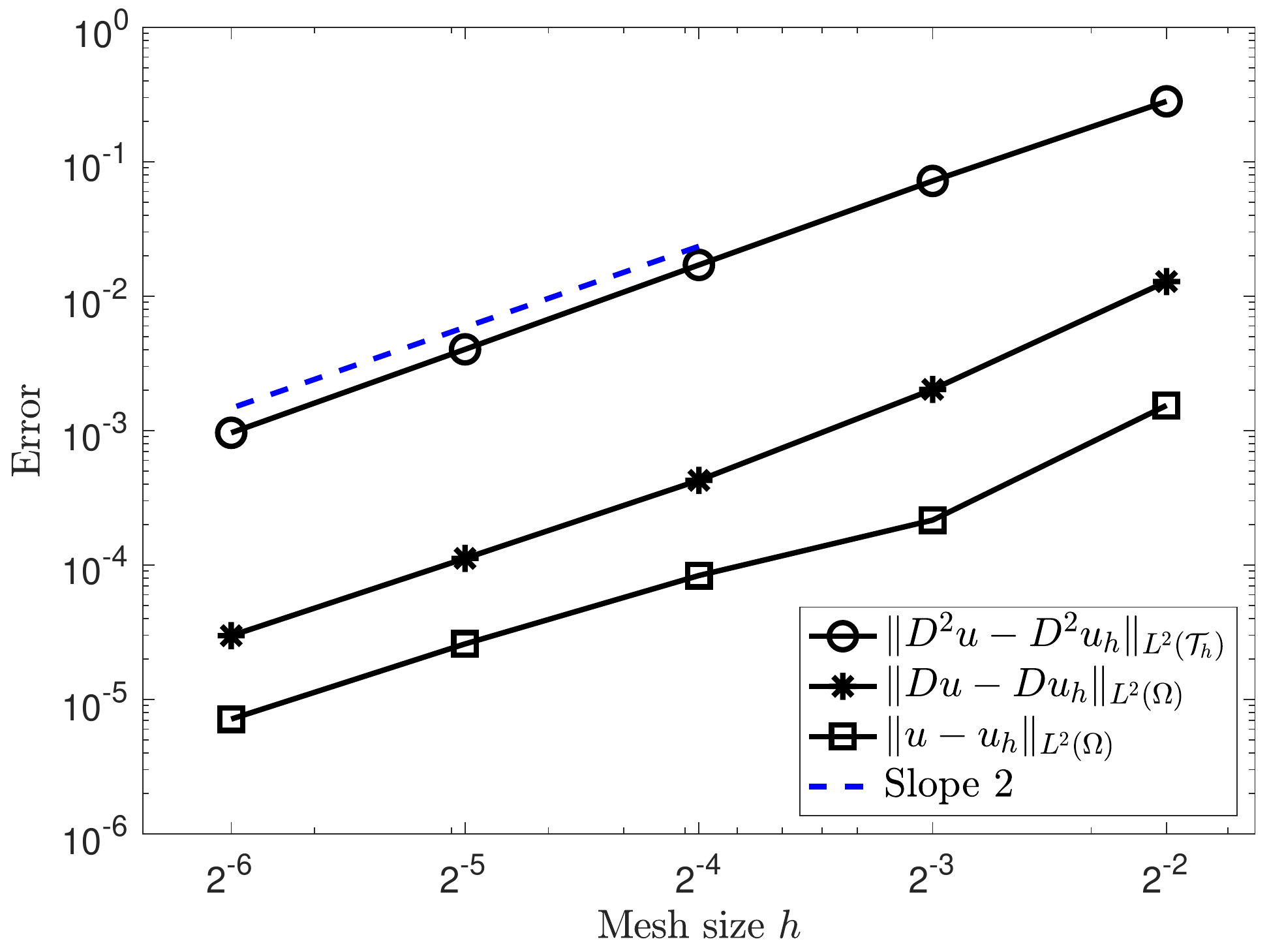}
  \caption{Convergence orders for Experiment 2.}
  \label{fig:exp2}
  \end{figure}

% \begin{table}[!htbp]
%     \centering
%     \begin{tabular}{llllllll}
%     \toprule
%       $h$  &$\|u - u_{h}\|_{L^{2}}$ & Order&$|u-u_h|_{H^{1}}$& Order&$|u-u_h|_{H^{2}}$& Order
%     & $c_{h}$\\
%     \midrule
%     $2^{-2}$ &   1.54e-03 &  0.00&   1.28e-02 &  0.00&   2.81e-01&   0.00&2.91e-15 \\
%     $2^{-3}$ &   2.15e-04 &  2.83&   2.02e-03 &  2.67&   7.18e-02&   1.97&1.16e-13\\
%     $2^{-4}$ &  8.34e-05 &  1.37 &  4.25e-04 &  2.25 &  1.70e-02 &  2.08&-9.71e-13\\
%     $2^{-5}$ &  2.58e-05 &  1.69 &  1.12e-04 &  1.93 &  4.01e-03 &  2.09&2.46e-11\\
%     $2^{-6}$ &  7.13e-06 &  1.86 &  2.97e-05 &  1.91 &  9.61e-04 &  2.06&1.57e-11\\
%     \bottomrule
%     \end{tabular}
%     \caption{Errors and observed convergence orders for Experiment 2.}\label{tab:exp2}
% \end{table}

\subsection{Experiment 3}\label{exp:3}
In the third experiment, let the domain $\Omega := \{|x|^{2} < 1\}$. 
The coefficient matrix $A$ is the same as in Experiment \ref{exp:2}. 
The oblique vector field is given as 
$$ \ell := \sqrt{1/2} \begin{pmatrix}
      1 & -1 \\ 
      1 & 1 \\
     \end{pmatrix} n, $$
so that the rotation angle is $\theta = \pi/4$. 
Take $f$ so that the solution is  
$$ u = (x^{2}_{1}+x_{2}^{2})e^{(x^{2}_{1}+x_{2}^{2})} - 1. $$ 
Here the compatibility constant $c = 2\sqrt{2}e$ and $\chi_{0} = 1$.
The numerical results on uniformly refined meshes are shown in Table \ref{tab:exp3}. 
Similar convergence orders to Experiments \ref{exp:1} and \ref{exp:2} are observed. The convergence to the compatibility constant $c = 2\sqrt{2}e$ is also observed.

\begin{table}[!htbp]
    \centering
    \begin{tabular}{llllllll}
    \toprule
      $h$  &$\|u - u_{h}\|_{L^{2}}$ & Order&$|u-u_h|_{H^{1}}$& Order&$|u-u_h|_{H^{2}}$& Order& $c_{h}$
    \\
    \midrule
  $2^{-2}$&  9.29e-03&   0.00&   1.11e-01&   0.00&   2.52&   0.00 & 7.6997\\
  $2^{-3}$&   1.48e-03&   2.65&   1.80e-02&   2.62&   6.95e-01&   1.86&7.6882 \\
  $2^{-4}$&   5.51e-04&   1.43&   3.40e-03&   2.41&   1.70e-01&   2.03&7.6880 \\
  $2^{-5}$&   1.73e-04&   1.67&   8.25e-04&   2.04&   4.04e-02&   2.08&7.6883 \\
  $2^{-6}$&   4.81e-05&   1.85&   2.15e-04&   1.94&   9.64e-03&   2.07&7.6884 \\
    \bottomrule
    \end{tabular}
    \caption{Errors and observed convergence orders for Experiment 3.}\label{tab:exp3}
\end{table}

% \begin{figure}[!htbp]
% \centering
% \captionsetup{justification=centering}
% \includegraphics[width=0.52\textwidth]{fig/Test3_order.pdf}
% \caption{Convergence orders for Experiment 3}
% \label{fig:exp3}
% \end{figure}

\subsection{Experiment 4}\label{exp:4}
In the fourth experiment, let $\Omega := \{x_{1}^{2}/4 + x_{2}^{2} < 1\}$ be an ellipse domain.  The coefficient matrix is the same as in Experiment \ref{exp:2}.
The oblique vector field $\ell$ is taken to be the tangential vector.
We take the function $f$ so that the solution is given by
 $ u = 1/4 \sin(\pi(x_{1}^2/4 + x_{2}^2)) - 1/(2\pi).$ In this case, the compatibility constant $c = 0$ and $\chi_{0} \geq 1/4$.
We apply the numerical scheme \eqref{eq:num-scheme} to the problem and refine the mesh uniformly. Similar convergence orders to the above experiments are observed. Numerical results and the mesh are shown in Figure \ref{fig:exp4}.

\begin{figure}[!htbp]
  \centering 
  \captionsetup{justification=centering}
  \subfloat[Convergence orders.]{
    \includegraphics[width=0.43\textwidth]{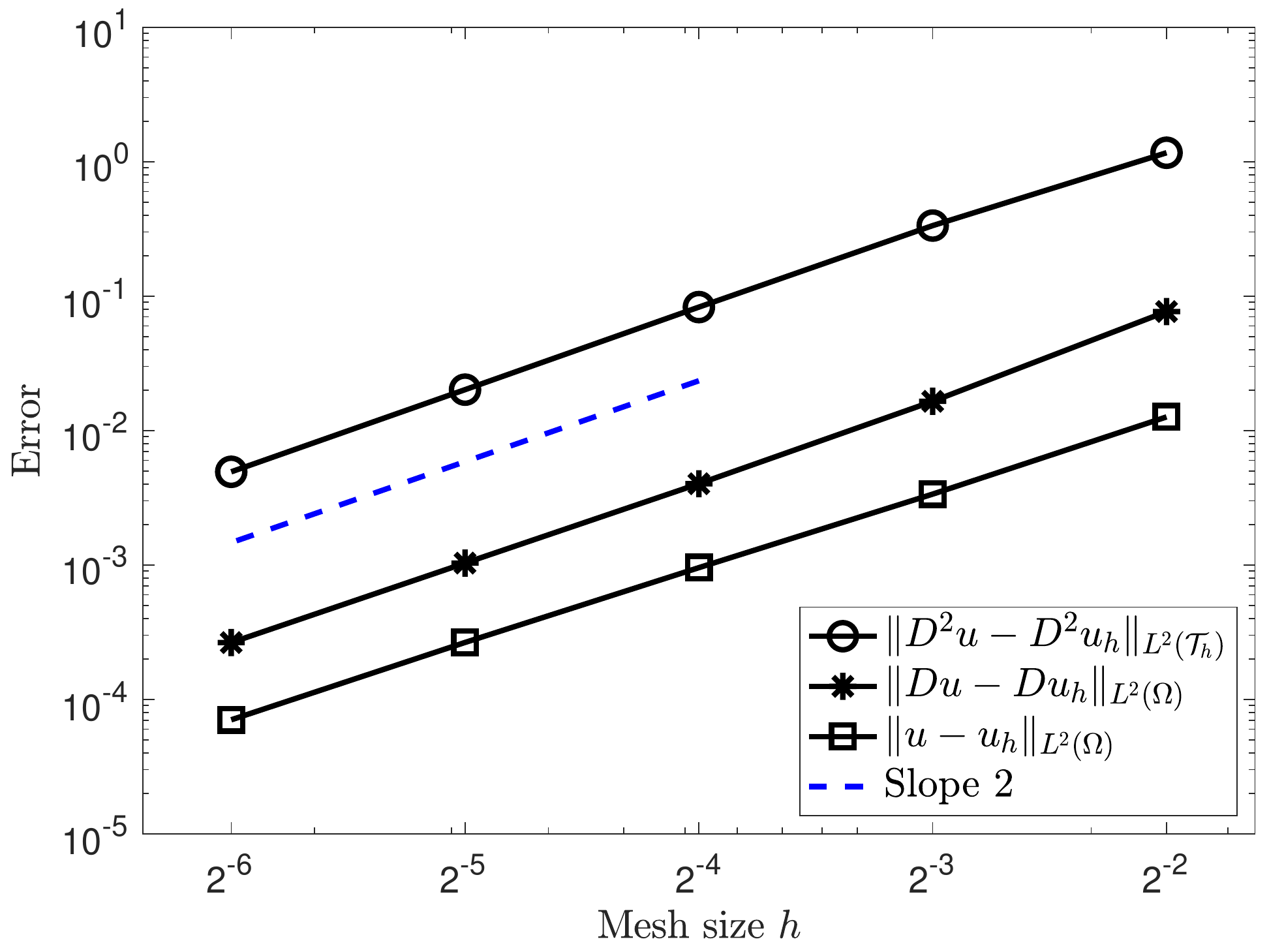}
    \label{fig:test4-k3}
  } 
  \subfloat[Mesh on an ellipse domain with $h=2^{-3}$.]{
    \includegraphics[width=0.43\textwidth]{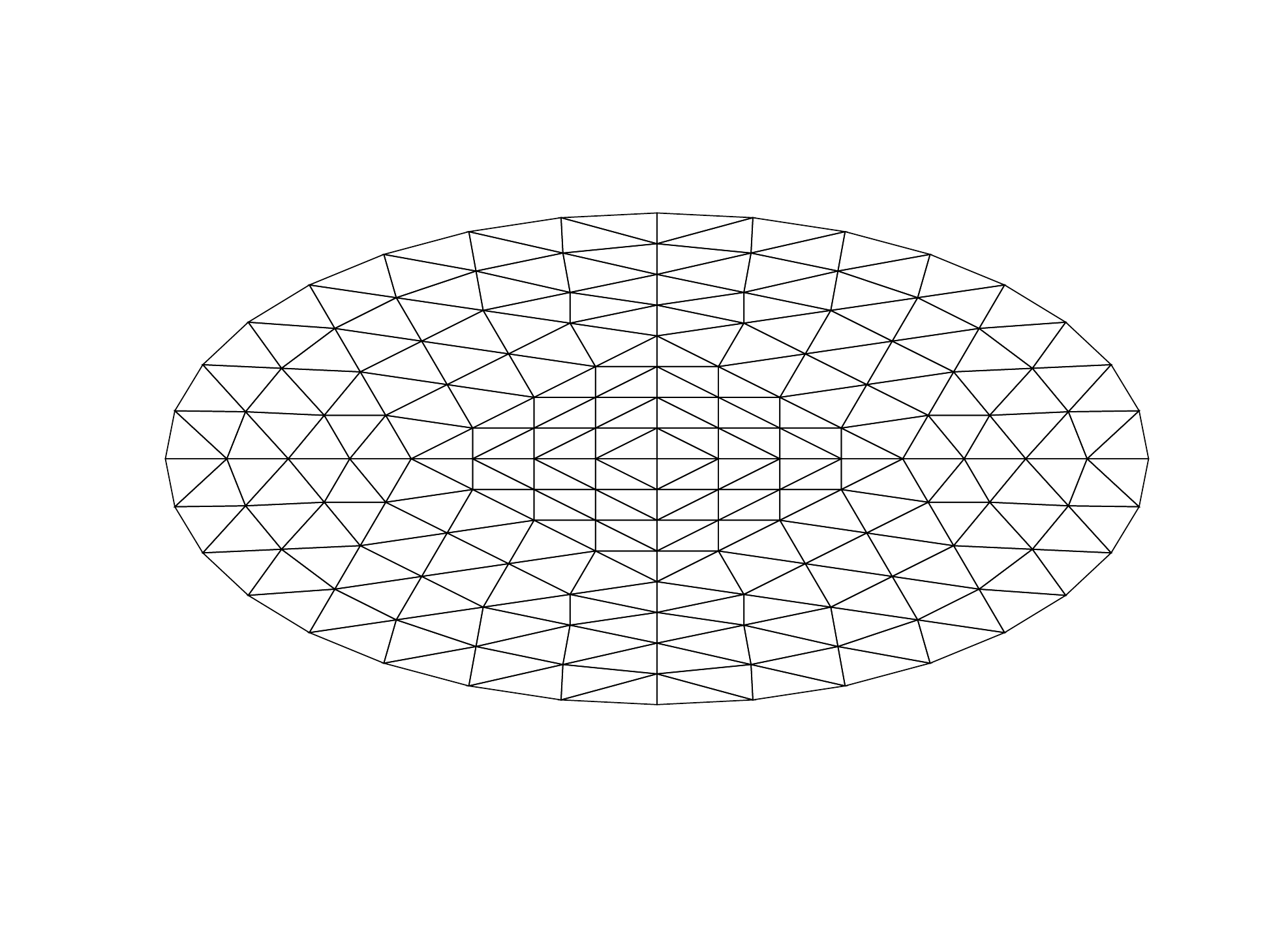}
    \label{fig:test4-mesh}
  } %
  \caption{Convergence orders and mesh for Experiment 4.}
  \label{fig:exp4}
  \end{figure}

% \begin{table}[!htbp]
%     \centering
%     \begin{tabular}{lllllll}
%     \toprule
%       $h$  &$\|u - u_{h}\|_{L^{2}}$ & Order&$|u-u_h|_{H^{1}}$& Order&$|u-u_h|_{H^{2}}$& Order
%     \\
%     \midrule 
%   $2^{-2}$&  2.40e-02&   0.00 &  1.11e-01 &  0.00&   1.31e+00&   0.00\\ 
%   $2^{-3}$&   5.58e-03&   2.10 &  2.33e-02 &  2.25&   3.77e-01&   1.80\\ 
%   $2^{-4}$&   1.29e-03&   2.12&   5.38e-03 &  2.12&   9.32e-02&   2.02\\ 
%   $2^{-5}$&   3.35e-04&   1.94&   1.34e-03 &  2.01&   2.26e-02&   2.04\\ 
%   $2^{-6}$&   8.81e-05&   1.93 &  3.38e-04 &  1.99&   5.54e-03&   2.03\\ 
%     \bottomrule
%     \end{tabular}
%     \caption{Errors and observed convergence orders for Experiment 4.}\label{tab:exp4}
% \end{table}

% \subsection{Experiment 5}
% \begin{enumerate}
%     \item Curved Hermite element $k = 4$. 
%     \item : $\widetilde{A} = A \circ \varphi$ for 
%     $$ A =  \begin{pmatrix} 
%         2 & \frac{x_1 x_2}{|x_1 x_2|} \\
%         \frac{x_1 x_2}{|x_1 x_2|} & 2 
%         \end{pmatrix} \quad \text{ and } \quad \varphi(x) = \begin{pmatrix}
%            x_{1} + 1/3 \\ 
%             x_{2} - 1/3 + (x_{1} + 1/3)^{1/3}\\ 
%         \end{pmatrix} $$
%     \item Graded mesh. 
% \end{enumerate}
\appendix

\section{Approximation property of finite element space}\label{app:approximation} 
This appendix considers the construction of the quasi-interpolation operator $\Pi_{h}^{\ell}: H^{2}_{\ell,0}(\Omega) \rightarrow V_{h,0}^{\ell}$ 
and its optimal approximation property. 

\subsection{Construction of $\Pi_{h}^{\ell}$}

%% notation: hat
We consistently use the following correspondence in the sequel: $\boldsymbol{x} = F_{K}(\hat{\boldsymbol{x}})$, $v = \hat{v} \circ F^{-1}_{K}$.
%% notation: dof  
Let $N_{h} := \dim V_{h}$ and $\mu_{i}(\cdot)$, $1\leq i \leq N_{h}$, be the global degrees of freedom of $V_{h}$. 
Denote $\bo{a}_{i}$, the node where $\mu_{i}(\cdot)$ takes its value. Let $s_{i}$ equal to $0$ or $1$, indicating whether $\mu_{i}(\cdot)$ takes a function or derivative value.
Denote $\phi_{i} \in V_{h}$, the basis function of $\mu_{i}(\cdot)$.  
$\omega_{i} := 
\bigcup\{K:\mathring{K}\cap\operatorname{supp}(\phi_{i}) \neq \varnothing, K\in \T_{h}\}$, and 
$\# \omega_{i}: = 
\#\{K:\mathring{K}\cap\operatorname{supp}(\phi_{i}) \neq \varnothing, K \in \T_{h}\}$. 
Note that $\# \omega_{i}$ is uniformly bounded if $\{\T_{h}\}_{h>0}$ is shape-regular. 
For any $K \in \T_{h}$, $\omega_{K}$ is the union of all $\omega_{i}$ containing $K$.

%%%%%%%%%%%%%%%%%%%%%%%%%%%%%%%%%%%%%%%%%%%%%%%
%scale of $\phi_{j}$
%%%%%%%%%%%%%%%%%%%%%%%%%%%%%%%%%%%%%%%%%%%%%%%
\begin{lemma}[scale of $\phi_{i}$]
  \label{lm:scale-phi-mu}Assume $\{\mathcal{T}_{h}\}_{h>0}$ is regular of order $k$. 
  Then for $1 \leq i \leq N_{h}$, 
\begin{equation}\label{eq:scale-phi}
  \|\phi_{i}\|_{H^{m}(K)} \lesssim h_{K}^{1 - m + s_{i}} \qquad 0\leq m \leq k+1,
\end{equation}
where $K \in \mathcal{T}_{h}$ is contained in $\omega_{i}$ and the hidden constant is depends on the polynomial degree $k$, the shape-regular constant $\sigma$ and $c_{1}, c_{2}, \cdots, c_{k+1}$.
\end{lemma}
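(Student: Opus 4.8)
The plan is to pull the estimate back to the reference triangle $\hat K$, where everything is finite-dimensional and $h$-independent, and then push forward using the scaling estimates of Lemma~\ref{lm:scaling}. First I would fix $1 \le i \le N_h$ and a triangle $K \subset \omega_i$, and set $\hat\phi_i := \phi_i \circ F_K \in \mathcal{P}_k(\hat K)$. The key point is to understand what degree of freedom $\mu_i$ becomes after the pull-back: if $s_i = 0$, then $\mu_i$ is a point evaluation, and $\hat\phi_i$ is (up to the local relabeling of nodes induced by $F_K$) the corresponding nodal basis function of the fixed finite element $(\hat K, \mathcal{P}_k(\hat K), \hat\Sigma)$, hence $\|\hat\phi_i\|_{H^m(\hat K)} \le C$ with $C$ depending only on $k$. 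If $s_i = 1$, then $\mu_i(\phi_i) = \partial_j \phi_i(\bo a_i) = 1$ for some direction $j$; by the chain rule $\partial_j \phi_i(\bo a_i)$ is a linear combination of the reference first derivatives $\hat\partial_1 \hat\phi_i(\hat{\bo a}_i), \hat\partial_2 \hat\phi_i(\hat{\bo a}_i)$ with coefficients entries of $B_K^{-1}$, so $\hat\phi_i = \|B_K\| \cdot \psi$ where $\psi$ is a fixed-scale combination of reference Hermite basis functions; using the shape-regularity bound $\|B_K\| \eqsim h_K$ gives $\|\hat\phi_i\|_{H^m(\hat K)} \lesssim h_K$. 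In both cases, $\|\hat\phi_i\|_{H^m(\hat K)} \lesssim h_K^{s_i}$ with the hidden constant depending only on $k$ (and, through the norm equivalence on $\mathcal{P}_k(\hat K)$, not on $h$).

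Next I would invoke Lemma~\ref{lm:scaling}\eqref{eq:scaling-a} to transfer back to $K$: for $0 \le m \le k+1$,
\begin{equation*}
  |\phi_i|_{H^m(K)} \lesssim |\det B_K|^{1/2} \|B_K^{-1}\|^m \sum_{j=\min(m,1)}^{m} \|B_K\|^{m-j} |\hat\phi_i|_{H^j(\hat K)}.
\end{equation*}
Using the shape-regularity equivalences $\|B_K\| \eqsim h_K$, $\|B_K^{-1}\| \eqsim h_K^{-1}$, $|\det B_K| \eqsim h_K^2$, the right-hand side is bounded by $h_K^{1-m} \sum_{j} \|\hat\phi_i\|_{H^j(\hat K)} \lesssim h_K^{1-m+s_i}$ by the reference-level bound from the previous step. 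Summing over $0 \le m' \le m$ (or simply applying this with each $m'$ and using $h_K \lesssim 1$ up to a constant depending on $\Omega$) yields $\|\phi_i\|_{H^m(K)} \lesssim h_K^{1-m+s_i}$, which is \eqref{eq:scale-phi}.

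The only genuinely nonroutine point — and the one I would be most careful about — is the $s_i = 1$ case: one must confirm that a \emph{single} global Hermite derivative degree of freedom, when pulled back through the (generally non-affine) curved map $F_K$, produces on $\hat K$ a function whose $H^m(\hat K)$-norm scales exactly like $\|B_K\|$ and not worse. This requires that only the \emph{linear part} $B_K$ of $D F_K$ enters the chain rule at the vertex $\hat{\bo a}_i$ at leading order; but since $F_K = \tilde F_K + \Phi_K$ with $DF_K = B_K + D\Phi_K$ and $\|D\Phi_K B_K^{-1}\| \le c_K < 1$ uniformly (the regular-of-order-$k$ hypothesis, Definition~\ref{def:triangulation-m}), the map $DF_K(\hat{\bo a}_i)$ is invertible with $\|DF_K(\hat{\bo a}_i)^{-1}\| \lesssim \|B_K^{-1}\| \eqsim h_K^{-1}$ and $\|DF_K(\hat{\bo a}_i)\| \lesssim \|B_K\| \eqsim h_K$, so the chain-rule coefficients relating $\partial_j \phi_i(\bo a_i)$ to $\hat\partial_\cdot \hat\phi_i(\hat{\bo a}_i)$ are of size $\eqsim h_K^{-1}$, forcing $\hat\phi_i$ to carry a factor $\eqsim h_K$. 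The remaining estimates are then a direct application of Lemma~\ref{lm:scaling} and the shape-regularity bounds, with all constants depending only on $k$, $\sigma$, and $c_1, \dots, c_{k+1}$.
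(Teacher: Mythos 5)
Your approach is essentially the paper's: pull the basis function back to $\hat K$, bound $\|\hat\phi_i\|_{H^m(\hat K)}$ by norm equivalence on the finite-dimensional space $\mathcal{P}_k(\hat K)$, and push forward with the curved-element scaling lemma. The structure and ingredients match exactly.

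The one place your write-up of the $s_i=1$ case needs tightening is the step where you run the chain rule backwards and conclude from the single scalar identity
$1=\partial_j\phi_i(\bo a_i)=\text{(combination of }\hat\partial_1\hat\phi_i(\hat{\bo a}_i),\hat\partial_2\hat\phi_i(\hat{\bo a}_i)\text{ with coefficients }\eqsim h_K^{-1}\text{)}$
that ``$\hat\phi_i$ carries a factor $\eqsim h_K$.'' One scalar constraint with $h_K^{-1}$-sized coefficients does not by itself bound a two-component vector from above. What makes the argument work is that $\phi_i$ is the \emph{dual} basis function, so the whole gradient is pinned down, $\nabla\phi_i(\bo a_i)=\bo e_j$, and every other physical degree of freedom of $\phi_i$ is zero. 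Then the \emph{forward} chain rule gives the reference degrees of freedom of $\hat\phi_i$ explicitly: $\hat\partial_l\hat\phi_i(\hat{\bo a}_i)=\sum_m(DF_K(\hat{\bo a}_i))_{m,l}\,\partial_m\phi_i(\bo a_i)=(DF_K(\hat{\bo a}_i))_{j,l}$, hence $|\hat\nabla\hat\phi_i(\hat{\bo a}_i)|\lesssim\|DF_K\|\lesssim h_K$, while all other entries of $\hat\Sigma$ applied to $\hat\phi_i$ vanish. Norm equivalence on $\mathcal{P}_k(\hat K)$ then yields $\|\hat\phi_i\|_{H^m(\hat K)}\lesssim h_K$ with no gap, which is precisely the paper's computation. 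Your concluding remarks about $DF_K=B_K+D\Phi_K$ and the uniform invertibility are the right safety check, but they are easier to deploy once the chain rule is stated in the forward direction.
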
 
\begin{proof}
\underline{\textit{Case 1: $s_{i} = 0$.}} In this case, $\mu_{i}(v) = v(\bo{a}_{i})$. 
By Lemma \ref{lm:scaling} (scaling on curved triangles), we have 
$ \|\phi_{i}\|_{H^{m}(K)} \lesssim h_{K}^{1 - m}\|\hat{\phi}_{i}\|_{H^{m}(\hat{K})}. $
It is easy to check that $\hat{\phi}_{i} = \phi_{i} \circ F_{K}$ is a basis function of $(\hat{K}, \mathcal{P}_{k}(\hat{K}), \hat{\Sigma})$, hence $\|\hat{\phi}_{i}\|_{H^{m}(\hat{K})} = \mathcal{O}(1)$.

\underline{\textit{Case 2: $s_{i} = 1$}.}
In this case, $\mu_{i}(v) = \partial_{j} v(\bo{a}_{i})$, where $j = 1$ or $2$ and $\bo{a}_{i} \in \mathcal{N}_{h}$ is a vertex.  
The chain rule shows that $\hat{\phi}_{i} = \phi_{i} \circ F_{K}$ vanishes on all the degrees of freedom in $\hat{\Sigma}$ except 
$$ \hat{\partial}_{j} \hat{\phi}_{i}(\hat{\bo{a}_{i}}) 
= \partial_{1} \phi_{i}(\bo{a}_{i}) (DF_{K}(\hat{\bo{a}_{i}}))_{1,j} 
+ \partial_{2} \phi_{i}(\bo{a}_{i}) (DF_{K}(\hat{\bo{a}_{i}}))_{2,j} \quad j = 1,2.  $$
Then, the norm equivalence on $\mathcal{P}_{k}(\hat{K})$ leads to
\begin{equation*} 
  \begin{aligned}
      \|\hat{\phi}_{i}\|^{2}_{H^{m}(\hat{K})} \eqsim 
      \sum_{\hat{\mu}\in\hat{\Sigma}} \hat{\mu}(\hat{\phi}_{i})^{2}  
      &=(\hat{\partial}_{1}\hat{\phi}_{i}(\hat{\bo{a}}_{i}))^{2} 
            + (\hat{\partial}_{2}\hat{\phi}_{i}(\hat{\bo{a}}_{i}))^{2} \\ 
      &\leq\sup_{\hat{x}\in\hat{K}}\|DF_{K}(\hat{x})\|^{2}
      \left(({\partial}_{1}{\phi}_{i}({\bo{a}_{i}}))^{2}
           +({\partial}_{2}{\phi}_{i}({\bo{a}_{i}}))^{2}\right) \lesssim h_{K}^{2}.  
    %\\  &=\sup_{\hat{x}\in\hat{K}}\|(D\Phi_{K}B_{K}^{-1} + I)B_{K}\|^{2} \lesssim (1 + c_{K})^{2} h_{K}^{2}.
  \end{aligned}
\end{equation*} 
This, combined with the scaling argument, yields
$\|\phi_{i}\|_{H^{m}(K)}  \lesssim h_{K}^{2-m}$. %\lesssim h_{K}^{1-m} \|\hat{\phi}_{i}\|_{H^{m}(\hat{K})}       \lesssim h_{K}^{2-m}. $ 
\end{proof}

%%%%%%%%%%%%%%%%%%%%%%%%%%%%%%%%%%%%%%%%%%%%%%%
%FEM Approximation: 1. L2 projection on K
%%%%%%%%%%%%%%%%%%%%%%%%%%%%%%%%%%%%%%%%%%%%%%%
Let $u \in L^{2}(K)$ and $\hat{u} := u \circ F_{K}$. Denote $Q_{\hat{K}}\hat{u}$ the $L^{2}$ projection of $\hat{u}$ on $\mP_{k}(\hat{K})$. 
Then, we define the operator $Q_{K}: L^{2}(K) \rightarrow P_{K}$ by 
$$ Q_{K}u := Q_{\hat{K}}\hat{u} \circ F_{K}^{-1}. $$ 
By the Lemma \ref{lm:scaling} (scaling on curved triangles) and the the Bramble-Hilbert Lemma \cite{brenner2007mathematical}, we have the following approximation property of $Q_K$.
\begin{lemma}[approximation property of $Q_{K}$]
  \label{lm:QK-app}Let $\{\T_{h}\}_{h>0}$ is of regular of $k$, $u \in H^{s}(\Omega)$ with $2 \leq s \leq k + 1$. 
Then we have   
\begin{equation}\label{eq:QK-app} 
    \|u - Q_{K}u\|_{H^{m}(K)} \lesssim  h_{K}^{s - m} \|u\|_{H^{s}(K)}, \qquad m=0,1,2.
\end{equation}  
Here the hidden constant depends on the polynomial degree $k$, the shape-regular constant $\sigma$ and $c_{1}, c_{2}, \cdots, c_{k+1}$.
\end{lemma}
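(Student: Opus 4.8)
The plan is to reduce the estimate to the fixed straight reference triangle $\hat K$, where $Q_{\hat K}$ is the ordinary $L^{2}$-projection onto $\mP_{k}(\hat K)$, invoke the Bramble--Hilbert lemma there, and then transfer the bound back to $K$ using the two-sided scaling estimates of Lemma~\ref{lm:scaling}.

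First I would observe that, since $\{\T_{h}\}_{h>0}$ is regular of order $k$ and $s \le k+1$, Lemma~\ref{lm:scaling} guarantees $\hat u := u\circ F_{K} \in H^{s}(\hat K)$ whenever $u \in H^{s}(K)$, so $Q_{K}u = (Q_{\hat K}\hat u)\circ F_{K}^{-1}$ is well defined and its pull-back is exactly $Q_{\hat K}\hat u$. Applying \eqref{eq:scaling-a} with $p=2$ to $v = u - Q_{K}u$, and using $\|B_{K}\| \eqsim h_{K}$, $|\det B_{K}| \eqsim h_{K}^{2}$ from shape regularity, one obtains for $m \in \{0,1,2\}$
\[
\|u - Q_{K}u\|_{H^{m}(K)} \lesssim h_{K}^{1-m}\,\|\hat u - Q_{\hat K}\hat u\|_{H^{m}(\hat K)},
\]
the lower-order contributions in \eqref{eq:scaling-a} being absorbed because $\sup_{h} h < \infty$.

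The heart of the argument is the reference-element estimate $\|\hat u - Q_{\hat K}\hat u\|_{H^{m}(\hat K)} \lesssim |\hat u|_{H^{s}(\hat K)}$. Since $s-1 \le k$, $Q_{\hat K}$ reproduces every polynomial in $\mP_{s-1}(\hat K) \subset \mP_{k}(\hat K)$; moreover $Q_{\hat K}$ has $L^{2}$-operator norm $1$, and because $\mP_{k}(\hat K)$ is finite dimensional and $\hat K$ is fixed, norm equivalence on $\mP_{k}(\hat K)$ together with this $L^{2}$-stability gives $\|Q_{\hat K}\hat w\|_{H^{m}(\hat K)} \lesssim \|\hat w\|_{L^{2}(\hat K)}$ for any $\hat w$. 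Writing $\hat u - Q_{\hat K}\hat u = (\hat u - \hat p) - Q_{\hat K}(\hat u - \hat p)$ for arbitrary $\hat p \in \mP_{s-1}(\hat K)$ and using $m \le 2 \le s$, this yields $\|\hat u - Q_{\hat K}\hat u\|_{H^{m}(\hat K)} \lesssim \inf_{\hat p \in \mP_{s-1}(\hat K)}\|\hat u - \hat p\|_{H^{s}(\hat K)}$, and the Bramble--Hilbert (Deny--Lions) lemma \cite{brenner2007mathematical} on $\hat K$ bounds this infimum by $|\hat u|_{H^{s}(\hat K)}$. Finally \eqref{eq:scaling-b} with $p=2$ gives $|\hat u|_{H^{s}(\hat K)} \lesssim h_{K}^{s-1}\|u\|_{H^{s}(K)}$, and chaining the three inequalities produces $\|u - Q_{K}u\|_{H^{m}(K)} \lesssim h_{K}^{1-m}\,h_{K}^{s-1}\,\|u\|_{H^{s}(K)} = h_{K}^{s-m}\|u\|_{H^{s}(K)}$, with constants depending only on $k$, $\sigma$ and $c_{1},\dots,c_{k+1}$.

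I expect no genuine difficulty here; the only two points that need a little care are that the $L^{2}$-projection $Q_{\hat K}$ is nonetheless $H^{m}$-stable on the fixed finite-dimensional space $\mP_{k}(\hat K)$ (so Bramble--Hilbert may be run in the $H^{m}$-norm), and the bookkeeping of the powers of $h_{K}$ across the two scaling steps, where one must check that the subdominant terms in \eqref{eq:scaling-a}--\eqref{eq:scaling-b} are controlled by the uniform bound $\sup_{h} h < \infty$.
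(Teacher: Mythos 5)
Your argument is correct and is exactly the expansion of what the paper does: the paper states Lemma~\ref{lm:QK-app} directly by citing Lemma~\ref{lm:scaling} together with the Bramble--Hilbert lemma, and you have supplied the standard pull-back/Deny--Lions/push-forward details (including the $H^m$-stability of $Q_{\hat K}$ on the fixed reference element via norm equivalence) that the paper leaves implicit. The bookkeeping of powers of $h_K$ across \eqref{eq:scaling-a} and \eqref{eq:scaling-b} checks out, so there is nothing to add.
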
 
%\begin{proof}
%  By Lemma \ref{lm:scaling} (scaling on curved triangles), we have 
%  $\|u - Q_{K}u\|_{H^{m}(K)} \lesssim h_{K}^{1-m} 
%  \|\hat{u} - Q_{\hat{K}}\hat{u}\|_{H^{m}(\hat{K})}$.
%%  Recall $Q_{\hat{K}}\hat{u}$ is the $L^{2}$ projection of $\hat{u}$ on $\mP_{k}(\hat{K})$, and hence the linear mapping from $L^{2}(\hat{K})$ into itself $: \hat{u} \mapsto \hat{u} - Q_{\hat{K}}\hat{u}$ vanishes on $\mP_{k}(\hat{K})$. 
%  By the Bramble-Hilbert Lemma \cite{brenner2007mathematical} and the condition $s \leq k + 1$, we obtain 
%    \begin{equation*} 
%      \|\hat{u} - Q_{\hat{K}}\hat{u}\|_{H^{s}(\hat{K})}\lesssim
%       |\hat{u}|_{H^{s}(\hat{K})}. 
%    \end{equation*}
%    Now by Lemma \ref{lm:scaling} (scaling on curved triangles) again, 
%    $ |\hat{u}|_{H^{s}(\hat{K})} \lesssim h_{K}^{s-1} \|u\|_{H^{s}(K)}. $
%    This concludes the proof.  
%\end{proof}

%%%%%%%%%%%%%%%%%%%%%%%%%%%%%%%%%%%%%%%%%%%%%%%
%FEM Approximation: 2. \Pi_{h}: averaging L2 projection
%%%%%%%%%%%%%%%%%%%%%%%%%%%%%%%%%%%%%%%%%%%%%%%
We are now ready to define the quasi-interpolation operator $\Pi_{h}: L^{2}(\Omega) \rightarrow V_{h}$ by averaging the local $L^{2}$ projection, i.e., 
\begin{equation}\label{def:Pi_h} 
  \mu_{i}(\Pi_{h}u) := \frac{1}{\# \omega_{i}}
  \sum^{\# \omega_{i}}_{j=1}\mu_{i}(Q_{K_{j}}u), \quad 1 \leq i \leq N_{h}.
\end{equation} 
Here, we write $ \omega_{i} = \bigcup^{\# \omega_{i}}_{j=1} K_{j}$ with $K_{j}$ and $K_{j+1}$ sharing a common edge.  
\begin{theorem}[approximation property of $\Pi_{h}$]
  \label{thm:app_Vh}Assume $\{\T_{h}\}_{h>0}$ is of regular of order $k$. 
  Let $u \in H^{s}(\Omega)$ with $2 \leq s \leq k+1$. Then we have 
  \begin{equation}\label{eq:app-pi} 
  \|u - \Pi_{h}u\|_{H^{m}(K)} \lesssim h_{K}^{s - m} \|u\|_{H^{s}(\omega_{K})}, \qquad m=0,1,2.
  \end{equation} 
  Here, the hidden constant depends on the polynomial degree $k$, the shape-regular constant $\sigma$ and $c_{1}, c_{2}, \cdots, c_{k+1}$. 
\end{theorem}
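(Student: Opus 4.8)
The plan is to prove the bound elementwise by comparing $\Pi_h u$ on $K$ with the single local $L^2$-projection $Q_K u$, which is already known to approximate well (Lemma~\ref{lm:QK-app}), and then showing the correction $Q_K u - \Pi_h u$ is of the same order because it is a finite element function whose degrees of freedom only measure the \emph{discrepancy} between the local projections $Q_{K_j}u$ on a patch around $K$ — each of which is $O(h^s)$-close to $u$. Concretely, first I would fix $K \in \T_h$ and write $u - \Pi_h u = (u - Q_K u) + w$ with $w := Q_K u|_K - \Pi_h u|_K \in P_K$, and dispatch the first summand directly by Lemma~\ref{lm:QK-app}, so that it remains to estimate $\|w\|_{H^m(K)}$.

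For the correction term, expand $w$ in the nodal basis of $P_K$: $w = \sum_{\bo{a}_i \in \bar K}\mu_i(w)\,\phi_i|_K$, a sum over the boundedly many local degrees of freedom of $K$. Using the scaling bound $\|\phi_i\|_{H^m(K)} \lesssim h_K^{1-m+s_i}$ from Lemma~\ref{lm:scale-phi-mu}, the proof reduces to showing
\begin{equation*}
|\mu_i(w)| \lesssim h_K^{\,s-1-s_i}\,\|u\|_{H^s(\omega_K)}, \qquad \bo{a}_i \in \bar K .
\end{equation*}
For an interior node of $K$ one has $\#\omega_i = 1$ with $K$ its only element, so $\mu_i(w) = \mu_i(Q_K u) - \mu_i(Q_K u) = 0$ and nothing is needed. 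For the remaining (vertex and edge) nodes, \eqref{def:Pi_h} and $K \in \omega_i$ give $\mu_i(w) = \tfrac{1}{\#\omega_i}\sum_j \mu_i(Q_K u - Q_{K_j}u)$, and since consecutive elements of $\omega_i$ share an edge I would telescope along the chain from $K$ to $K_j$ (whose length is controlled by shape-regularity), thereby reducing to estimating $\mu_i(Q_{K'}u - Q_{K''}u)$ for two elements $K',K''$ sharing a single edge $e\ni\bo{a}_i$; note that $e$ is an interior edge, hence straight, so no curved-element subtleties enter at this level.

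The heart of the argument is this two-element estimate, carried out via edge $L^2$-norms rather than pointwise values of $u$. If $s_i = 0$, bound $|\mu_i(Q_{K'}u - Q_{K''}u)| \le \|Q_{K'}u - Q_{K''}u\|_{L^\infty(e)}$, apply a one-dimensional inverse estimate on the polynomial space on $e$ to pass to $h_e^{-1/2}\|Q_{K'}u - Q_{K''}u\|_{L^2(e)}$, insert $\pm u$, and bound each piece by $\|u - Q_{K'}u\|_{L^2(e)} \lesssim h_{K'}^{-1/2}\|u-Q_{K'}u\|_{L^2(K')} + h_{K'}^{1/2}|u-Q_{K'}u|_{H^1(K')} \lesssim h_{K'}^{\,s-1/2}\|u\|_{H^s(K')}$ using the trace estimate (Lemma~\ref{lm:trace}) and Lemma~\ref{lm:QK-app}. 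If $s_i = 1$, run the same scheme on $\nabla Q_{K'}u - \nabla Q_{K''}u$ on $e$, using Lemma~\ref{lm:trace} and Lemma~\ref{lm:QK-app} with $m=1,2$ to get $\|\nabla u - \nabla Q_{K'}u\|_{L^2(e)} \lesssim h_{K'}^{\,s-3/2}\|u\|_{H^s(K')}$; combined with the inverse estimate this yields the extra factor $h_K^{-1}$ matching $s_i=1$. In both cases shape-regularity gives $h_e \eqsim h_{K'}\eqsim h_{K''}\eqsim h_K$ and every element involved lies in $\omega_K$, so summing over the chain gives the displayed bound on $|\mu_i(w)|$.

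Finally, combining this with the basis expansion yields $\|w\|_{H^m(K)} \lesssim \sum_i h_K^{1-m+s_i}h_K^{s-1-s_i}\|u\|_{H^s(\omega_K)} \lesssim h_K^{s-m}\|u\|_{H^s(\omega_K)}$, and adding the $Q_K$-estimate from Lemma~\ref{lm:QK-app} completes the proof. The main obstacle is the low-regularity endpoint $s = 2$: there the pointwise derivative values of $u$ at vertices are not defined, so one cannot compare $\mu_i(Q_{K'}u)$ and $\mu_i(Q_{K''}u)$ \emph{to} $\mu_i(u)$; the fix is precisely to compare the two smooth local projections to each other through $L^2(e)$-norms of (the gradient of) $u - Q_K u$, which makes sense for all $s\ge 2$ and keeps the whole estimate on the safe side of the trace theorem.
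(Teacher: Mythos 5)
Your proposal is correct and follows essentially the same route as the paper: decompose $u-\Pi_h u = (u-Q_K u) + (Q_K u - \Pi_h u)$, handle the first part with Lemma~\ref{lm:QK-app}, expand the second in the local nodal basis with $\|\phi_i\|_{H^m(K)} \lesssim h_K^{1-m+s_i}$, telescope the degree-of-freedom discrepancy across shared edges, pass from $W^{s_i}_\infty$ on the (straight) interior edge to $h^{-1/2-s_i}\|\cdot\|_{L^2}$ by a one-dimensional inverse estimate, then insert $\pm u$ and close with trace plus projection error. Your observation that interior edges are straight (so restrictions of $Q_{K^\pm}u$ to the edge are polynomials) matches the role Remark~\ref{rem:construction-F_K} plays in the paper, and your remark about avoiding pointwise values of $\nabla u$ at $s=2$ is precisely the reason the paper also compares the two projections to each other rather than to $u$.
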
 
\begin{proof}
  For now, we number $\omega_{1}, \cdots, \omega_{n}$ the sets $\omega_{i}$ such that $K \subset \omega_{i}$. Then, we have 
  $$ \|Q_{K}u - \Pi_{h}u\|_{H^{m}(K)} \leq \sum^{n}_{i=1} 
  |\mu_{i}(Q_{K}u) - \mu_{i}(\Pi_{h}u)| \|\phi_{i}\|_{H^{m}(K)}.$$
In the case of $\# \omega_{i} = 1$, we know that $\omega_{i} = K$ and $\mu_{i}(Q_{K}u) - \mu_{i}(\Pi_{h}u) = 0 $. Below, we assume $\# \omega_{i} \geq 2$. By \eqref{def:Pi_h} and triangle inequality, we have
\begin{equation}\label{eq:app-Vh-dof-jump} 
\begin{aligned}
  &\big| \mu_{i}(Q_{K}u) - \mu_{i}(\Pi_{h}u)  \big|
  = \left|
  \frac{1}{\# \omega_{i}}\sum^{\# \omega_{i}}_{j=1}\left( 
    \mu_{i}(Q_{K}u)-\mu_{i}(Q_{K_{j}}u)\right)
  \right| \\ 
  &\lesssim \sum^{\# \omega_{i}-1}_{j=1}\left|\mu_{i}(Q_{K_{j}}u) - \mu_{i}(Q_{K_{j+1}}u)\right| 
  \lesssim \sum_{\substack{F\in\F_{h}^{i}\\ \text{with }\bo{a}_{i} \in \overline{F} }}
  |Q_{K^{+}}u - Q_{K^{-}}u|_{W^{s_{i}}_{\infty}(F)},
\end{aligned}
\end{equation}
where we denote $F = K^{+} \cap K^{-}$, and recall $\bo{a}_{i}$ is the node where $\mu_{i}$ takes its value, $s_{i} = 0$ or $1$. 
 Note $(Q_{K^{\pm}}u)\big|_{F} = Q_{\hat{K}}\hat{u} \circ (F_{K^{\pm}}^{-1}\big|_{F})$, where 
 $F_{K^{\pm}}^{-1}\big|_{F} $ is an affine mapping (see Remark \ref{rem:construction-F_K}). Therefore, it is clear that $(Q_{K^{\pm}}u)\big|_{F}$ is indeed a polynomial. Hence, we have  
 \begin{equation}\label{eq:app-Vh-jump1} 
  \begin{aligned}
    |Q_{K^{+}}u  - Q_{K^{-}}u|_{W^{s_{i}}_{\infty}(F)} &\lesssim 
    h_{F}^{-\frac{1}{2}-s_{i}}\|Q_{K^{+}}u - Q_{K^{-}}u\|_{L^{2}(F)} \\ 
    &\leq h_{F}^{-\frac{1}{2}-s_{i}}\left(
      \|Q_{K^{+}}u - u\|_{L^{2}(F)} + \|Q_{K^{-}}u - u\|_{L^{2}(F)}
    \right). 
   \end{aligned}
 \end{equation} 
Lemma \ref{lm:trace} (trace estimate) and Lemma \ref{lm:QK-app} (approximation property of $Q_{K}$) lead to 
\begin{equation}\label{eq:app-Vh-jump2}
\begin{aligned}
  \|Q_{K^{\pm}}u - u\|_{L^{2}(F)} & \lesssim 
  \left(h_{K^{\pm}}^{-\frac{1}{2}}\|Q_{K^{\pm}}u - u\|_{L^{2}(K^{\pm})} 
  + h_{K^{\pm}}^{\frac{1}{2}}|Q_{K^{\pm}}u - u|_{H^{1}(K^{\pm})}\right)\\ 
  & \lesssim h_{K}^{s-\frac{1}{2}}\|u\|_{H^{s}(\omega_{K})}.  
\end{aligned}
\end{equation} 
Combing the above estimates, we obtain 
$\big| \mu_{i}(Q_{K}u) - \mu_{i}(\Pi_{h}u)\big| 
\lesssim h_{K}^{s-1-s_{i}}\|u\|_{H^{s}(\omega_{K})}.  $
This, combined with Lemma \ref{lm:scale-phi-mu} (scale of $\phi_{i}$), leads to $\|Q_{K}u - \Pi_{h}u\|_{H^{m}(K)}\lesssim h_{K}^{s-m}\|u\|_{H^{s}(\omega_{K})}.$  Then the proof is concluded by triangle inequality and Lemma \ref{lm:QK-app} (approximation property of $Q_{K}$).
\end{proof}

%%%%%%%%%%%%%%%%%%%%%%%%%%%%%%%%%%%%%%%%%%%%%%%
%FEM Approximation: 3. oblique boundary condition
%%%%%%%%%%%%%%%%%%%%%%%%%%%%%%%%%%%%%%%%%%%%%%%
Next, we reorder the degrees of freedom such that for 
$i = N_{0} + 1, \cdots, N_{h}$, $\mu_{i}(v)=(\nabla v\cdot\bo{e}_{i}) (\bo{a}_{i}), $
where $\bo{a}_{i} \in \N^{\p}_{h}$ is a boundary vertex and $\bo{e}_{i} = (1,0)^{T}$ or $(0,1)^{T}$. 
Then for any $u\in H^{2}_{\ell,0}(\Omega)$, denote $C_{u} := (\nabla u\cdot \ell)|_{\p\Omega}$.
We first define $w_{h} \in V_{h}$ as follows. 
\begin{subequations} \label{def:Pi_h^ell} 
  \begin{itemize}
    \item For $i = 1,\cdots,N_{0}$, 
\begin{equation} \label{eq:def-wh-inner}
  \mu_{i}(w_{h}) : = \mu_{i}(\Pi_{h}u) = \frac{1}{\# \omega_{i}}
  \sum^{\# \omega_{i}}_{j=1}\mu_{i}(Q_{K_{j}}u).  
\end{equation}
\item For $i = N_{0} + 1, \cdots, N_{h}$,
\begin{equation} \label{eq:def-wh-bd}
  \mu_{i}(w_{h}) := \left(
    (\bo{e}_{i}\cdot\ell)C_{u}+
    (\bo{e}_{i}\cdot\ell^{\perp})\frac{1}{\# \omega_{i}}\sum^{\# \omega_{i}}_{j=1}
    \frac{\p Q_{K_{j}}u}{\p \ell^{\perp}}      
    \right)(\bo{a}_{i}).
\end{equation}
\end{itemize}
It can be easily verified that $ (\nabla w_{h} \cdot \ell)(\bo{x}) = C_{u} $ for any $\bo{x}\in\N^{\p}_{h}$. 
Finally, the quasi-interpolation operator $\Pi^{\ell}_{h}: H^{2}_{\ell,0}(\Omega) \rightarrow V^{\ell}_{h,0}$ is defined by 
\begin{equation}\label{eq:def-Pi_h^ell} 
  \Pi^{\ell}_{h}u := w_{h} - \frac{1}{|\Omega|}\int_{\Omega} w_{h} \dx.
\end{equation} 
\end{subequations}

\subsection{Proof of Theorem \ref{thm:app_Vh_ell}}
% \begin{theorem}[approximation property of $V^{\ell}_{h,0}$]
%  Assume $\{ \T_{h} \}_{h>0}$ is regular of order $k$, 
%  $u \in H^{s}(\Omega) \cap H^{2}_{\ell,0}(\Omega)$, and 
%  the oblique vector field $\ell$ is piecewise $C^{s-1}$ with $2\leq s \leq k+1$. 
%  It holds that  
%  \begin{equation}\label{eq:err-Pi^ell_h-K} 
%    |u - \Pi_{h}^{\ell}u|_{H^{m}(K)} \lesssim h_{K}^{s-m}\|u\|_{H^{s}(\omega_{K})}, \qquad m=1,2, 
%    \end{equation} 
%    and 
%    \begin{equation}\label{eq:err-Pi^ell_h-F} 
%    \|\nabla (u - \Pi_{h}^{\ell}u)\|_{L^{2}(\p K)}\lesssim h_{K}^{s-1-1/2}\|u\|_{H^{s}(\omega_{K})}.
%  \end{equation} 
%  Here the hidden constant depends on  $\|\ell\|_{W^{s-1}_{\infty}(\p \omega_{K} \cap \p\Omega)}$ , the polynomial degree $k$, the shape-regular constant $\sigma$ and $c_{1}, c_{2}, \cdots, c_{k+1}$.
%\end{theorem} 
%\begin{proof}
   By \eqref{eq:def-Pi_h^ell}, we know $|\Pi_{h}u-\Pi_{h}^{\ell}u|_{H^{m}(K)} = |\Pi_{h}u-w_{h}|_{H^{m}(K)}, $ for $m = 1,2$. 
  Note that if $K$ does not contain boundary vertex, then $ (\Pi_{h}u-w_{h})|_{K} \equiv 0$ by \eqref{eq:def-wh-inner}. 
  Below, we assume $K$ contains a boundary vertex. We number $\omega_{1}, \cdots,\omega_{n}$ the sets $\omega_{i}$ satisfying $K \subset \omega_{i}$ and $N_{0}+1\leq i\leq N_{h}$. Then,
  \begin{equation*}\label{eq:app-Vhl-err-dofbd} 
    \begin{aligned}
      |\Pi_{h}u-w_{h}|_{H^{m}(K)} 
    & \leq \sum^{n}_{i=1}
    \big|\mu_{i}(\Pi_{h}u) - \mu_{i}(w_{h})\big| |\phi_{i}|_{H^{m}(K)}.
    \end{aligned} 
  \end{equation*} 
For any $1 \leq i \leq n$, there exists $K^{(i)} \subset \omega_{i} $ with edge $F^{(i)}$ satisfying $\bo{a}_{i} \in F^{(i)} \subset \p\Omega$.
Then, \eqref{eq:def-wh-bd} and triangle inequality leads to 
\begin{equation*}\label{eq:app-Vhl-dof-jump} 
\begin{aligned}
    \big|\mu_{i}(\Pi_{h}u) - &\mu_{i}(w_{h})\big|
   \leq \big|\frac{1}{\# \omega_{i}}\sum^{\# \omega_{i}}_{j=1}
  \frac{\p Q_{K_{j}}u}{\p \ell}(\bo{a}_{i}) - C_{u}\big|  \\ 
  & \lesssim 
  \underbrace{\sum^{\# \omega_{i}-1}_{j=1}\big| 
  \frac{\p Q_{K_{j}}u}{\p \ell}(\bo{a}_{i}) - \frac{\p Q_{K_{j+1}}u}{\p \ell}(\bo{a}_{i}) 
  \big|}_{I_{1}}
  +
  \underbrace{\big|\frac{\p Q_{K^{(i)}}u}{\p \ell}(\bo{a}_{i}) - C_{u}\big|}_{I_{2}}.
\end{aligned}
\end{equation*} 
Similar arguments as \eqref{eq:app-Vh-jump1} and \eqref{eq:app-Vh-jump2} in Theorem \ref{thm:app_Vh} (approximation property of $\Pi_{h}$) yield 
  $$ I_{1} \lesssim\sum_{\substack{F\in\F_{h}^{i}\\ \text{with }\bo{a}_{i} \in \overline{F} }}
  |Q_{K^{+}}u - Q_{K^{-}}u|_{W^{1}_{\infty}(F)}\lesssim h^{s-2}_{K}\|u\|_{H^{s}(\omega_{K})}.$$

For the estimate of $I_{2}$, let $\hat{F} := F_{K^{(i)}}^{-1}(F^{(i)})$ with vertex $\hat{\bo{a}} := F_{K^{(i)}}^{-1}(\bo{a}_{i})$. Define $\hat{\ell} : \hat{F} \rightarrow \mathbb{R}^{2}$ and $\hat{\eta} : \hat{F} \rightarrow \mathbb{R}^{2}$ by 
\begin{equation} \label{eq:eta-ell}
\hat{\ell}(\hat{x}) := (\ell \circ F_{K^{(i)}})(\hat{x}) 
\quad \text{ and } \quad 
\hat{\eta}(\hat{x}) := (DF_{K^{(i)}}(\hat{x}))^{-1} \hat{\ell}(\hat{x}) \quad \hat{x} \in \hat{F}.
\end{equation}
The regularity of $\{\T_{h}\}_{h>0}$ and $\ell$ implies that $\hat{\eta} \in C^{s-1}(\hat{F},\mathbb{R}^{2})$.
Now, pulling back $I_{2}$ to $\hat{F}$, and applying the chain rule yields 
\begin{subequations} \label{eq:Fhat-ell}  
  \begin{equation} 
    I_{2} = \big|(\nabla Q_{K^{(i)}}u\cdot\ell)(\bo{a}_{i}) - C_{u}\big| = 
    \big|(\hat{\nabla} Q_{\hat{K}}\hat{u}\cdot \hat{\eta})(\bh{a})-C_{u}\big|, 
  \end{equation}
  \text{and}
  \begin{equation} 
   C_{u} = (\nabla u \cdot \ell)\big|_{F^{(i)}} = 
      (\hat{\nabla}\hat{u}\cdot \hat{\eta})\big|_{\hat{F}}. 
  \end{equation}
  \end{subequations}
Let $\hat{\mathcal{I}} \hat{\eta} \in \mP_{k}(\hat{F}; \mathbb{R}^{2})$ be the Lagrange interpolation of $\hat{\eta}$ satisfying $ \hat{\mathcal{I}}\hat{\eta}(\bh{a}) = \hat{\eta}(\bh{a}). $ By \eqref{eq:Fhat-ell} and the norm equivalence on the polynomial space, we arrive at
  $$ \begin{aligned}
    I_{2} & = \big|(\hat{\nabla} Q_{\hat{K}}\hat{u}\cdot\hat{\eta})(\bh{a})-C_{u}\big| = 
       \big|(\hat{\nabla} Q_{\hat{K}}\hat{u}\cdot \hat{\mathcal{I}} \hat{\eta})(\bh{a})-C_{u}\big| \\ 
       & \lesssim \|\hat{\nabla} Q_{\hat{K}}\hat{u}\cdot \hat{\mathcal{I}} \hat{\eta}-C_{u}\|_{L^{2}(\hat{F})}
       =  \|\hat{\nabla} Q_{\hat{K}}\hat{u}\cdot \hat{\mathcal{I}}\hat{\eta}- \hat{\nabla}\hat{u}\cdot\hat{\eta}\|_{L^{2}(\hat{F})} \\ 
       & \leq \underbrace{\|\hat{\nabla} Q_{\hat{K}}\hat{u} - \hat{\nabla}\hat{u}\|_{L^{2}(\hat{F})} \|\hat{\mathcal{I}} \hat{\eta}\|_{L^{\infty}(\hat{F})}}_{J_{1}} + 
       \underbrace{\|\hat{\nabla}\hat{u}\|_{L^{2}(\hat{F})}\|\hat{\mathcal{I}}\hat{\eta}-\hat{\eta}\|_{L^{\infty}(\hat{F})}}_{J_{2}}. 
  \end{aligned} $$ 
  For $J_{1}$, we have 
  $ \|\hat{\mathcal{I}} \hat{\eta}\|_{L^{\infty}(\hat{F})} \leq \|\hat{\eta}\|_{L^{\infty}(\hat{F})} = \|(DF_{K^{(i)}})^{-1}\hat{\ell}\|_{L^{\infty}(\hat{F})} \lesssim h_{K^{(i)}}^{-1} \|\ell\|_{L^{\infty}(F^{(i)})}. $
  Lemma \ref{lm:trace} (trace estimate), Lemma \ref{lm:scaling} (scaling on curved triangles),  and Lemma \ref{lm:QK-app} (approximation property of $Q_{K}$) give
  $$ \begin{aligned}
    \|\hat{\nabla} Q_{\hat{K}}\hat{u} - \hat{\nabla}\hat{u}\|_{L^{2}(\hat{F})} & \lesssim 
    |Q_{\hat{K}}\hat{u} - \hat{u}|_{H^{1}(\hat{K})} + |Q_{\hat{K}}\hat{u} - \hat{u}|_{H^{2}(\hat{K})} \\ 
    &\lesssim \|Q_{K^{(i)}}u - u\|_{H^{1}(K^{(i)})} + h_{K^{(i)}}\|Q_{K^{(i)}}u - u\|_{H^{2}(K^{(i)})} \\ 
    & \lesssim h_{K^{(i)}}^{s-1}\|u\|_{H^{s}(\omega_{K})},
  \end{aligned} $$
which leads to $J_{1} \lesssim h_{K^{(i)}}^{s-2}\|u\|_{H^{s}(\omega_{K})}$. 

Similarly, for $J_{2}$, we have 
$$ \|\hat{\nabla} \hat{u}\|_{L^{2}(\hat{F})} \lesssim 
  |\hat{u}|_{H^{1}(\hat{K})} + |\hat{u}|_{H^{2}(\hat{K})} \lesssim \|u\|_{H^{2}(K^{(i)})} \leq \|u\|_{H^{s}(\omega_{K})}. $$
Recall the definition of $\hat{\eta}$ in \eqref{eq:eta-ell}, the Leibniz rule leads to 
$$ 
|\hat{\eta}|_{W^{s-1}_{\infty}(\hat{F})} \lesssim \sum_{j=0}^{s-1} |(DF_{K^{(i)}}(\hat{x}))^{-1}|_{W^{j}_{\infty}(\hat{F})}  |\hat{\ell}(\hat{x})|_{W^{s-1-j}_{\infty}(\hat{F})}.
$$ 
By the fact that $\ell$ is piecewise $C^{s-1}$ and $\{\T_{h}\}_{h>0}$ is regular of order $k$, the standard scaling argument on the curved element 
(cf. \cite[Lemma 2.2, Lemma 2.3]{bernardi1989optimal}) gives 
$$ 
 |(DF_{K^{(i)}}(\hat{x}))^{-1} |_{W^{j}_{\infty}(\hat{F})} \lesssim h_{F^{(i)}}^{j-1} \quad \text{and} \quad
 |\hat{\ell}(\hat{x})|_{W^{s-1-j}_{\infty}(\hat{F})} \lesssim h_{F^{(i)}}^{s-1-j} \|\ell\|_{W^{s-1-j}_{\infty}(F^{(i)})}.
$$ 
Therefore, we have $\|\hat{\eta} - \hat{\mathcal{I}} \hat{\eta}\|_{L^{\infty}(\hat{F})} \lesssim |\hat{\eta}|_{W^{s-1}_{\infty}(\hat{F})} \lesssim h_{F^{(i)}}^{s-2}$. Combining the two above estimates, we obtain $J_{2} \lesssim h_{K}^{s-2}\|u\|_{H^{s}(\omega_{K})}$. 
  Thus, we conclude $I_{2} \lesssim J_{1} + J_{2} \lesssim h_{K}^{s-2}\|u\|_{H^{s}(\omega_{K})}$. 
  
  Combining the estimate of $I_{1}$, $I_{2}$, and \eqref{eq:scale-phi} in Lemma \ref{lm:scale-phi-mu} (scale of $\phi_{i}$), we obtain $|\Pi_{h}u - w_{h}|_{H^{m}(K)} \lesssim h_{K}^{s-m}\|u\|_{H^{s}(\omega_{K})}$. 
  Then, \eqref{eq:app-Pi-ell-K} is obtained by triangle inequality and 
  Theorem \ref{thm:app_Vh} (approximation property of $\Pi_{h}$). \eqref{eq:app-Pi-ell-F} follows from Lemma \ref{lm:trace} (trace estimate) and \eqref{eq:app-Pi-ell-K}. The proof is thus complete.
%\end{proof}

\section{Proof of Lemma \ref{lm:poin}}\label{app:poincare}
Let $K$ be a curved triangle with vertices $P_{j}$, $j = 1,2,3$ (see Figure \ref{fig:curved-triangle}), $F = \wideparen{P_{2}P_{3}}$ be the curved edge. 

\begin{figure}[!htbp]
    \centering
    \captionsetup{justification=centering}
    \includegraphics[width=0.3\textwidth]{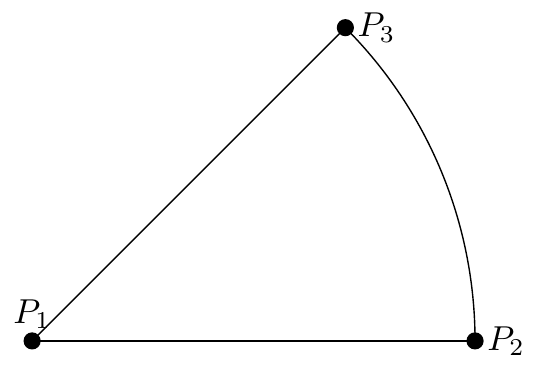}
    \caption{curved triangle $K$}
    \label{fig:curved-triangle}
\end{figure}

%\begin{lemma}[Proof of Lemma \ref{lm:poin}]
%    \label{lm:poincare}
%    Let $K$ be a curved triangle with vertices $P_{j}$, $j = 1,2,3$ (see Fig. \ref{fig:curved-triangle}), $F = \wideparen{P_{2}P_{3}}$ be the curved edge. For any $u \in H^{1}(K)$, we have 
%$$ \|u\|^{2}_{L^{2}(K)} \leq 2(1+c_{K})^{2} 
%\left(h_{K}\|u\|_{L^{2}(F)}^{2} + h^{2}_{K}|u|^{2}_{H^{1}(K)}\right).$$
%\end{lemma} 
%\begin{proof}
    Let $(z_{1}, z_{2})$ be the coordinates of point $P_{1}$. We have
    $$\operatorname{div}( \left(\begin{array}{c}
    x_{1} - z_{1} \\
    x_{2} - z_{2}
    \end{array}\right) u^{2} ) = 2 u^{2} + 2 u \left((x_{1} - z_{1})\partial_{1}u + (x_{2} - z_{2})\partial_{2}u\right).
    $$
    By integrating over $K$, we have 
    \begin{equation*} 
    \begin{aligned}
    &\quad \int_{K} u^{2} \mathrm{d}x \\
    &= \frac{1}{2} \int_{K}^{} \operatorname{div} ( \left(\begin{array}{c}
        x_{1} - z_{1} \\
        x_{2} - z_{2}
        \end{array}\right) u^{2} )\mathrm{d}x - \int_{K} u((x_{1} - z_{1})\partial_{1}u + (x_{2} - z_{2})\partial_{2}u )\mathrm{d}x \\ 
    & \leq \frac{1}{2} \int_{\partial K}^{} \boldsymbol{n} \cdot \left(\begin{array}{c}
        x_{1} - z_{1} \\
        x_{2} - z_{2}
        \end{array}\right) u^{2} \mathrm{d}s + \operatorname{diam}(K) \int_{K}^{}|u|(|\partial_{1}u| + |\partial_{2}u|)\mathrm{d}x. 
    \end{aligned}
    \end{equation*} 
    By Cauchy-Schwarz inequality, we get
    $$ \operatorname{diam}(K) \int_{K}^{}|u|(|\partial_{1}u| + |\partial_{2}u|)\mathrm{d}x \leq \frac{1}{2} \int_{K} u^{2} \mathrm{d}x + \operatorname{diam}(K)^{2} \int_{K} |\nabla u|^{2}\mathrm{d}x.$$ 
    Note that $\boldsymbol{n} \cdot (x_{1} - z_{1}, x_{2} - z_{2})^{T} \equiv  0$ on $\overline{P_{1}P_{2}}$ and $\overline{P_{3}P_{1}}$, thus 
    $$ \int_{\partial K}^{} \boldsymbol{n} \cdot \left(\begin{array}{c}
        x_{1} - z_{1} \\
        x_{2} - z_{2}
        \end{array}\right) u^{2} \mathrm{d}s = \int_{F}^{} \boldsymbol{n} \cdot \left(\begin{array}{c}
        x_{1} - z_{1} \\
        x_{2} - z_{2}
        \end{array}\right) u^{2} \mathrm{d}s \leq \operatorname{diam}(K) \int_{F}u^{2}\mathrm{d}s. $$
    In summary, we have
        \begin{equation*} 
        \begin{aligned}
        \int_{K}u^{2}\mathrm{d}x \leq  \frac{1}{2}\operatorname{diam}(K) \int_{F}u^{2}\mathrm{d}s + \frac{1}{2} \int_{K} u^{2} \mathrm{d}x + \operatorname{diam}(K)^{2} \int_{K} |\nabla u|^{2}\mathrm{d}x,
        \end{aligned}
        \end{equation*} 
        or
        $$\int_{K}u^{2}\mathrm{d}x \leq \operatorname{diam}(K) \int_{F}u^{2}\mathrm{d}s +  2\operatorname{diam} (K)^{2} \int_{K} |\nabla u|^{2}\mathrm{d}x. $$
        We then conclude the proof by applying \eqref{eq:diamK-hK}.
%\end{proof}

\section*{Acknowledgments}
The authors would like to express their gratitude to Prof. Jun Hu in Peking University for his helpful discussions. 

\bibliographystyle{siamplain}
\bibliography{oblique} 

\end{document}